\newtheorem{thm}{Theorem}[section]
\newtheorem{lem}[thm]{Lemma}
\newtheorem{cor}[thm]{Corollary}
\theoremstyle{definition}
\newtheorem{rem}[thm]{Remark}
\numberwithin{equation}{section}
\newcommand{\R}{\mathbb{R}}
\newcommand{\IC}{\mathbb{C}}
\newcommand{\cH}{\mathcal{H}}
\newcommand{\loc}{\operatorname{loc}}
\renewcommand{\L}{\operatorname{L}} 
\renewcommand{\H}{\operatorname{H}}
\renewcommand{\H}{\operatorname{H}} 
\newcommand{\E}{\mathsf{E}} 
\newcommand{\ree}{{\mathbb{R}^{n+1}}}
\renewcommand{\P}{\mathcal{P}} 
\newcommand{\gradx}{\nabla_x}
\renewcommand{\div}{\operatorname{div}}
\newcommand{\dhalf}{D_t^{1/2}} 
\newcommand{\HT}{H_t} 
\renewcommand{\d}{\, \mathrm{d}} 
\renewcommand\Re{\operatorname{Re}}
\renewcommand\Im{\operatorname{Im}}
\newcommand{\cl}[1]{\overline{#1}} 
\DeclareMathOperator{\dist}{dist}
\DeclareMathOperator{\dom}{\mathsf{D}} 
\newcommand{\sgn}{\operatorname{sgn}}
\newcommand{\BMO}{\textup{BMO}}
\def\Xint#1{\mathchoice
{\XXint\displaystyle\textstyle{#1}}%
{\XXint\textstyle\scriptstyle{#1}}%
{\XXint\scriptstyle\scriptscriptstyle{#1}}%
{\XXint\scriptscriptstyle%
\scriptscriptstyle{#1}}%
\!\int}
\def\XXint#1#2#3{{\setbox0=\hbox{$#1{#2#3}{%
\int}$ }
\vcenter{\hbox{$#2#3$ }}\kern-.6\wd0}}
\def\barint{\,\Xint -} 
\def\bariint{\barint_{} \kern-.4em \barint}
\def\bariiint{\bariint_{} \kern-.4em \barint}
\renewcommand{\iint}{\int_{}\kern-.34em \int} 
\renewcommand{\iiint}{\iint_{}\kern-.34em \int} 
\title[Generalizations of the parabolic Kato square root estimate]{The Kato square root problem for parabolic operators with an anti-symmetric part in  BMO}
\author{Alireza Ataei}
\email{alireza.ataei@math.uu.se}
\address{Department of Mathematics, Uppsala University, S-751 06 Uppsala,
Sweden}
\author{Kaj Nystr\"{o}m}\thanks{K.N. was partially supported by
grant 2022-03106 from the Swedish research council (VR)}
\email{kaj.nystrom@math.uu.se}
\address{Department of Mathematics, Uppsala University, S-751 06 Uppsala,
Sweden}
\thanks{}
\subjclass[2010]{Primary: 35K10, 35K20; Secondary: 26A33, 42B25}
\keywords{Kato square root problem, square function, parabolic operator, second order parabolic operator, BMO, anti-symmetric.}
\date{\today}
\begin{document}
\begin{abstract}
We solve the Kato square root problem for parabolic operators whose coefficients can be written as the sum of a complex part, which is coercive, and a real anti-symmetric part, which is in $\BMO$. In particular, we allow for certain unbounded coefficients.
\end{abstract}

\maketitle


\section{Introduction and  the main result}

 In the variables $(x,t) \in \R^n \times \R=:\ree$, we consider parabolic operators of the form
\begin{eqnarray}\label{eq1deg+}
\cH:=\partial_t -\div_{x} (A(x,t)\nabla_{x} ),
 \end{eqnarray}
where  the coefficient matrix $A = A(x,t)$ is measurable with complex entries. Furthermore,
\begin{eqnarray}\label{eq1deg+h}
 A(x,t)=S(x,t)+D(x,t),
 \end{eqnarray}
 where $S = S(x,t)$ is  complex and  coercive, while $D = D(x,t)$ is real, measurable, anti-symmetric, having entries in a space of functions of bounded mean oscillation functions. In particular, $A$ may be unbounded. We refer to \eqref{ellip} and \eqref{bmocoeff} below for the precise formulations of our assumptions on $A=S+D$.

Parabolic operators of the form \eqref{eq1deg+} and \eqref{eq1deg+h}, with \( A \) real, have recently been investigated in several works, most notably in \cite{seregin2012divergence, qian2019parabolic}. In \cite{seregin2012divergence}, motivated by questions concerning the behavior of solutions to elliptic and parabolic equations with low-regularity drift terms, the authors studied equations of the form
\[
\partial_t u + {\bf c} \cdot \nabla_x u - \Delta u = 0,
\]
as well as their stationary counterparts, where \( {\bf c} \) is a divergence-free vector field in \( \mathbb{R}^n \). They demonstrated that the divergence-free condition on \( {\bf c} \) allows for relaxed regularity assumptions on \( {\bf c} \), under which the Harnack inequality and other regularity results for solutions can still be obtained. Notably, the interior regularity theory of De Giorgi, Nash, and Moser extends to these parabolic equations if \( {\bf c} \in \L^\infty(\mathbb{R}, (\BMO(\mathbb{R}^n, \mathrm{d}x))^{-1}) \).

For more general parabolic equations in divergence form, this condition corresponds to considering operators as in \eqref{eq1deg+} and \eqref{eq1deg+h}, with \( A \) real, under the assumption that \( S \) satisfies \eqref{ellip} and \( D \) is anti-symmetric and satisfies \eqref{bmocoeff}. The space of bounded mean oscillation functions, \( \BMO \), plays a central role here. First, \( \BMO \) exhibits the correct scaling properties that arise naturally in the iterative arguments of De Giorgi-Nash-Moser. Second, the \( \BMO \) condition on the anti-symmetric part of the matrix ensures the proper definition of weak solutions. This follows essentially from the div-curl lemma in the theory of compensated compactness \cite{coifman1993compensated}, as outlined in \cite{seregin2012divergence}.

In \cite{qian2019parabolic}, the authors proved Aronson-type bounds for the fundamental solutions of the operator
\[
\partial_t - \div_x((S + D) \nabla_x).
\]

The studies in \cite{seregin2012divergence, qian2019parabolic} are motivated by extending the De Giorgi-Nash-Moser framework to operators with possibly unbounded coefficients. These works are also inspired by the relevance of such operators to the Navier-Stokes system
\begin{align}\label{eq:ns01}
    (\partial_t + {\bf u} \cdot \nabla_x - \nu \Delta_x) {\bf u} &= \nabla_x p, \notag \\
    \nabla_x \cdot {\bf u} &= 0,
\end{align}
where ${\bf u} = (u_1, u_2, u_3)$ represents the velocity vector field of the fluid flow, and \( p(x, t) \) denotes the pressure at location \( x \) and time \( t \). The vorticity \( {\bf \omega} = (\omega_1, \omega_2, \omega_3) \) is related to the velocity \( {\bf u} \) via \( {\bf \omega} = \nabla_x \times {\bf u} \). By formally differentiating the Navier-Stokes system, the vorticity \( {\bf \omega} \) satisfies the equation
\begin{equation}\label{eq:vor1}
    (\partial_t + {\bf u} \cdot \nabla_x - \nu \Delta_x) {\bf \omega} = {\bf \omega} \cdot \nabla_x {\bf u}.
\end{equation}

The key feature of \eqref{eq:vor1} is that \( {\bf u}(x, t) \) is a time-dependent vector field with limited regularity, which is, however, solenoidal—meaning that for every \( t \), \( \nabla_x \cdot {\bf u}(\cdot, t) = 0 \) in the sense of distributions. Consequently, both \( \partial_t + {\bf u} \cdot \nabla_x - \nu \Delta_x \) and its adjoint \( -\partial_t - {\bf u} \cdot \nabla_x - \nu \Delta_x \) can be viewed as diffusion generators.

As noted in \cite{qian2019parabolic}, in three dimensions, the divergence-free property of \( {\bf u} = (u_1, u_2, u_3) \) implies
\[
{\bf u} \cdot \nabla_x = \sum_{i,j=1}^3 \partial_{x_i} (D_{i,j} \partial_{x_j}),
\]
where \( \{D_{i,j}\} \) is an anti-symmetric matrix. Thus, the operator \( \nu \Delta_x - {\bf u} \cdot \nabla_x \) can be expressed in divergence form as
\[
\sum_{i,j} \partial_{x_i} (\nu \delta^{ij} + D_{i,j}) \partial_{x_j} \equiv \div_x \big((S(x, t) + D(x, t)) \nabla_x \big),
\]
where the symmetric part \( S \) is uniformly elliptic, and the anti-symmetric part \( D \) determines the divergence-free drift vector field \( {\bf u} \).

\subsection{Statement of our main result} The purpose of this paper is to solve the Kato square root problem for $\cH$, that is, to prove Theorem \ref{thm:Kato} stated below. The space $\E(\ree)$ is defined as the space of square integrable functions $u$ whose spatial gradient $\nabla_x u$ and half order
derivative $\dhalf u$ are square integrable. For rigorous definitions we refer to Section \ref{sec1}.

\begin{thm}\label{thm:Kato} Assume that  $A=S+D$ satisfies \eqref{ellip} and \eqref{bmocoeff}.  Then, the part of $\cH$ in $\L^2(\ree)$, with maximal domain $\dom(\cH)=\{u \in \E(\ree): \cH u \in \L^2(\ree)\}$, is {maximal accretive}.  Furthermore, there is a well-defined square root of $\cH$ whose domain satisfies $\dom(\sqrt{\cH}) = \mathsf{E}(\ree)$, and
\begin{align*}
\|\sqrt {\cH}\, u\|_{2} \sim \|\nabla_x u\|_{2}+ \| \dhalf u\|_{2}  \qquad (u \in \mathsf{E}(\ree)),
\end{align*}
holds with  implicit constants that only depend on the dimension, the boundedness and coercivity  parameters of $S$, and the $\BMO$ constant of $D$. The same conclusions are true with $\cH$ replaced by $\cH^*$.
\end{thm}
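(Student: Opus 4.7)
The plan is to apply the first-order Hodge–Dirac framework of Auscher–Axelsson–McIntosh, adapted to the parabolic setting by Nystr\"om and collaborators, in which the Kato equivalence is recast as the bounded $H^{\infty}$ functional calculus of a bisectorial operator of the form $\Pi B$ acting on $\L^{2}(\ree;\IC^{N})$. Consistent with the hints provided by the macros in the preamble, I would take $\Pi$ to be built from $\gradx$, $\divx$, $\dhalf$ and $\HT\dhalf$ (so that $\Pi$ is skew-adjoint modulo the sign twist coming from $\HT$ being a skew-adjoint Fourier multiplier in $t$) and $B$ a block-diagonal multiplication operator encoding $A=S+D$, arranged so that $\cH$ coincides with the relevant block of $(\Pi B)^{2}$. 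The Kato equivalence then follows from the bounded $H^{\infty}$ calculus of $\Pi B$, equivalently from quadratic estimates.

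Before invoking this machinery I would verify that the sesquilinear form
\[
a(u,v) := \langle A\gradx u, \gradx v\rangle + \langle \HT\,\dhalf u, \dhalf v\rangle, \qquad u,v \in \E(\ree),
\]
is well-defined and accretive on $\E(\ree)$. The only delicate term is $\langle D\gradx u, \gradx v\rangle$, which I would make sense of via the div–curl (compensated compactness) lemma of \cite{seregin2012divergence}: the product $\gradx u \otimes \gradx v$ has a Hardy-space structure, so it pairs by $H^{1}$–$\BMO$ duality with the antisymmetric $D$. Since the anti-symmetric part contributes purely imaginary to $a(u,u)$, one has $\Re\, a(u,u) = \Re\langle S\gradx u, \gradx u\rangle \gtrsim \|\gradx u\|_{2}^{2}$, and combining this with the skew-symmetry of $\HT\,\dhalf$ and standard energy/density considerations yields maximal accretivity of $\cH$ on the domain given in the statement.

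The core analytic step is the quadratic estimate
\[
\int_{0}^{\infty} \|t\,\Pi B\,(1+t^{2}(\Pi B)^{2})^{-1} u\|_{2}^{2}\, \frac{\d t}{t} \lesssim \|u\|_{2}^{2}.
\]
Following the AAM scheme I would reduce this, by the principal-part approximation, to a Carleson measure estimate, and then treat the Carleson estimate by a local $T(b)$ argument with test functions $b_{Q}$ adapted to each parabolic cube $Q$, as was carried out in the bounded-coefficient parabolic Kato problem. The indispensable ingredients are parabolic off-diagonal decay for the resolvents $(1+\i t\Pi B)^{-1}$ and the Caccioppoli-type bounds used in the construction of the $b_{Q}$.

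The main obstacle is that the classical implementation of all these ingredients---off-diagonal resolvent bounds, the principal-part reduction, and the construction of $b_{Q}$---rests on $\L^{\infty}$-boundedness of the coefficient matrix, whereas here $D$ is merely in $\BMO$ and hence genuinely unbounded. I would overcome this in two steps, guided by the elliptic analogue of Escauriaza–Hofmann. First, work with truncations $A^{(N)} := S + D^{(N)}$ satisfying $\|D^{(N)}\|_{\BMO} \le C\|D\|_{\BMO}$ uniformly in $N$, apply the bounded-coefficient theory to the corresponding $\cH^{(N)}$, and track every constant so that it depends only on dimension, the ellipticity parameters of $S$, and $\|D\|_{\BMO}$---never on $\|D^{(N)}\|_{\infty}$; this requires systematically replacing $\L^{\infty}$ appearances of $D$ in the off-diagonal and $T(b)$ estimates by $\BMO$ averages, using John–Nirenberg and the div–curl structure. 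Second, pass to the limit $N\to\infty$ using form convergence $a^{(N)}\to a$, which is once again secured by $H^{1}$–$\BMO$ duality. Once the bounded $H^{\infty}$-calculus of $\Pi B$ is obtained the Kato identity, together with the corresponding statement for $\cH^{*}$, follows from the standard algebraic identification in the AAM framework.
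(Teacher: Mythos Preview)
Your approach differs from the paper's in two substantive ways. First, the paper does not use the first-order $\Pi B$ framework at all; it follows the ``second-order'' route of \cite{AAEN,N1}, working directly with the resolvents $\mathcal{E}_\lambda=(I+\lambda^2\cH)^{-1}$ and reducing the Kato estimate to the quadratic bound $|||\lambda\mathcal{E}_\lambda\cH f|||_2\lesssim\|\mathbb{D}f\|_2$. Second, the paper never truncates $D$: it normalizes so that $\barint_{Q_0}D=0$ (Section~\ref{keysec}), which forces $D\in\L^p_{\loc}$ while leaving the operator unchanged, and then defines and controls $\mathcal{U}_\lambda A=\lambda\mathcal{E}_\lambda\div_x A$ directly via off-diagonal bounds that are proved separately in space (Lemma~\ref{l0}, an exponential-weight argument) and in time (Lemma~\ref{lenewpiece}). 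The paper in fact singles out, in the introduction, the reason the Escauriaza--Hofmann localization does not transfer cleanly: the hidden-coercivity form involves $\HT$, which does not preserve compact support in $\ree$, so one cannot solve $\cH u=f$ in a space of compactly supported functions and run the elliptic argument verbatim.

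There are also two concrete gaps in your outline. Your maximal-accretivity sketch is incomplete: the form $a(u,v)$ satisfies $\Re a(u,u)=\Re\langle S\nabla_x u,\nabla_x u\rangle$, which controls $\|\nabla_x u\|_2$ but not $\|\dhalf u\|_2$, since $\HT$ is skew-adjoint. The paper recovers coercivity by testing against $(1+\delta\HT)v$ (Lemma~\ref{Lem: WP on R}); this ``hidden coercivity'' is not a standard density argument and must be invoked explicitly. More seriously, your truncation program in the first-order framework is where all the difficulty lies and is not addressed: in the AAM scheme the off-diagonal decay for $(1+\i t\Pi B)^{-1}$ is obtained from commutators $[\eta,\Pi B]=[\eta,\Pi]B$, whose operator norm carries a factor of $\|B\|_\infty$, and the principal-part and Carleson steps inherit this dependence. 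Replacing these $\L^\infty$ bounds by $\BMO$ bounds requires precisely the structural observation that $A$ only ever hits gradients, and exploiting this inside the first-order resolvent machinery is not automatic---indeed, it is exactly what the paper's second-order argument is designed to make transparent (see Lemma~\ref{newes+} and Corollary~\ref{cor}). Your proposal names the obstacle correctly but does not supply the mechanism to overcome it.
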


Kato square root problems, along with their associated estimates, play a crucial role in operator theory and the study of boundary value problems for elliptic and parabolic equations and systems. These estimates have garnered significant attention since the resolution of the Kato square root conjecture for elliptic operators in \cite{AHLMcT} and the elliptic counterpart of Theorem \ref{thm:Kato} was proved in \cite{EH}.

Recently, notable progress has been made in the study of the parabolic Kato problem for operators of the form \eqref{eq1deg+} and \eqref{eq1deg+h} under the assumption \( D \equiv 0 \). Specifically, in \cite{AEN}, the Kato (square root) estimate was established, and in \cite{AAEN}, the case of a weighted operator of the form
\[
\partial_t - w(x)^{-1}\div_{x}(A(x, t)\nabla_{x})
\]
was addressed, assuming \( w = w(x) \) is a real-valued function belonging to the Muckenhoupt class \( A_2(\mathbb{R}^n, \mathrm{d}x) \).

The field of parabolic Kato square root estimates originated in \cite{N1}, where a notion of reinforced weak solutions for second-order parabolic equations, induced by a coercive sesquilinear form, was introduced, and the proof of the original Kato conjecture for elliptic operators \cite{AHLMcT} was adapted to the parabolic setting. For further details on parabolic Kato square root problems, we refer to \cite{AEN, AEN1, N1, CNS, N2, AAEN}.

Our contribution is the resolution of the Kato square root problem for parabolic operators of the form \eqref{eq1deg+} and \eqref{eq1deg+h}, under the assumptions \eqref{ellip} and \eqref{bmocoeff}. This framework allows for certain unbounded coefficients through \( D \). While this result is of independent interest, it is motivated by the long-term goal of establishing solvability and regularity for Dirichlet and Neumann problems in the upper half-space with data in \( \L^p \), for operators modeled on those in \eqref{eq1deg+} and \eqref{eq1deg+h}. Specifically, we aim to generalize the results of \cite{AEN1} to operators of this type, incorporating one additional spatial coordinate and considering \( S \) and \( D \) as real-valued.

In \cite{AEN1}, it was shown that for parabolic operators in the upper half-space, assuming \( D \equiv 0 \) and \( A \) is real, bounded, measurable, independent of the transversal variable, uniformly elliptic, but not necessarily symmetric, the associated parabolic measure is absolutely continuous with respect to the surface measure, as characterized by the Muckenhoupt class \( A_\infty \). Theorem \ref{thm:Kato}, along with the associated square function estimates, represents a necessary step towards generalizing these results to our setting. This is because the proof of the crucial Carleson measure estimate in \cite{AEN1} relies heavily on square function estimates, the Kato estimate, and the non-tangential estimates for parabolic operators with time-dependent coefficients established in \cite{AEN}.

In particular, to extend the framework of \cite{AEN1} to operators as in \eqref{eq1deg+} and \eqref{eq1deg+h}, it appears essential to first resolve the Kato square root problem for such operators. This necessity has driven our investigation, although we emphasize that the Kato square root problem is also of significant independent interest.

For contributions to the boundary behavior of solutions of elliptic operators in divergence form with a BMO anti-symmetric part,
the Dirichlet problem for such operators, and \(\L^p\) theory for the square roots and square functions of these operators,
see \cite{LP19}, \cite{HLMP22a}, and \cite{HLMP22b}. Notably, in \cite{HLMP22a}, the crucial estimate is presented in Lemma 2.4,
and its proof closely follows the argument in \cite{AEN1}, with minimal acknowledgment. This serves as an example where progress
on parabolic equations has significantly contributed to advancements on elliptic problems.

\subsection{The Proof of Theorem \ref{thm:Kato}} To prove Theorem \ref{thm:Kato}, we follow the approach outlined in \cite{AAEN} and \cite{N1}, both of which build on ideas developed in the proof of the original elliptic Kato problem in \cite{AHLMcT}. In \cite{AAEN}, revisiting \cite{N1}, a `second-order' method was introduced for parabolic operators with time-dependent measurable coefficients, including weighted operators with \( D \equiv 0 \). This approach significantly simplified the proof of \cite[Thm.~2.6]{AEN} by arranging the main quadratic/square function estimate in a way that nearly separates the time and space variables. In particular, at the level of off-diagonal bounds, only operators involving spatial differentiation require estimates. These estimates are directly deduced from the equation, respecting parabolic scaling, and avoid the more complex off-diagonal decay and Poincaré inequalities for non-local derivatives, which were key novelties in \cite{AEN}.

Operationally, the argument in \cite{AAEN} reorganized terms from \cite{N1} to ensure that fine harmonic analysis estimates apply solely to the spatial components, while \( t \)-derivatives appear in blocks manageable with elementary resolvent estimates based on the maximal accretivity and hidden coercivity of \( \mathcal{H} \).

A pivotal observation in these arguments is that when the time variable spans the full real line, parabolic operators exhibit `hidden coercivity,’ revealed through the Hilbert transform \( \HT \) in the \( t \)-variable. Decomposing \( \partial_t = \dhalf \HT \dhalf \), the sesquilinear form associated with \eqref{eq1deg+} in \( \L^2 \) is
\begin{align}
    \label{hidden_coercivity_intro}
    \iint_{\mathbb{R}^{n+1}} A \nabla_x u \cdot \overline{\nabla_x v} + \HT \dhalf u \cdot \overline{\dhalf v} \, \mathrm{d}x \mathrm{d}t, \quad (u, v \in \mathsf{E}(\mathbb{R}^{n+1})).
\end{align}
Lower bounds incorporating both time and space derivatives are then derived by taking \( v = (1+\delta \HT)u \) with \( \delta > 0 \) small. This technique, originating in Kaplan~\cite{Kaplan}, was rediscovered in \cite{N1} for the parabolic Kato problem. The maximal accretivity of \( \mathcal{H} \) follows directly from this observation.

On a technical level, a key contribution of this paper, compared to \cite{N1, AEN, AAEN}, is the derivation of off-diagonal estimates for operators with time-dependent coefficients, where the anti-symmetric part \( D \) lies in \( \BMO \). These estimates are established by separating the analysis into spatial and temporal components, as detailed in Lemma \ref{l0} and Lemma \ref{lenewpiece}, combining methods from \cite{AAEN}, \cite{N1}, and \cite{EH}. While one might consider using Lemma 1 in \cite{EH} by selecting a solution \( \mathcal{H} u = f \) for \( f \in \L^2(\mathbb{R}^{n+1}) \) within a compactly supported space, this is not feasible. The sesquilinear form defining \( \mathcal{H} \), see \eqref{hidden coercivity}, involves the Hilbert transform \( \HT \), which does not preserve compactly supported functions in \( \mathbb{R}^{n+1} \). Consequently, the idea of hidden coercivity and the Lax-Milgram theorem cannot directly apply. Instead, we overcome this by carefully separating the spatial and temporal components in our argument.

An additional key step in proving the Kato estimate involves deriving \( T(b) \)-type estimates and the crucial Carleson measure estimate for \( \mathcal{U}_\lambda A \), where \( \mathcal{U}_\lambda := \lambda (1 + \lambda^2 \mathcal{H})^{-1} \nabla_x \). While \( \mathcal{U}_\lambda A \) satisfies uniform \( \L^2 \) and off-diagonal estimates, the challenge lies in the fact that \( A = S + D \), with \( D \in \L^\infty(\mathbb{R}, \BMO(\mathbb{R}^n, \mathrm{d}x)) \). This complicates the definition of \( \mathcal{U}_\lambda A \) and the approximation of \( (\mathcal{U}_\lambda A) \mathcal{A}_\lambda \), where \( \mathcal{A}_\lambda \) is a self-adjoint averaging operator over parabolic cubes of size \( \lambda \).

To address this, we modify \( D \) in \( A = S + D \) to make \( D \) and \( A \) locally bounded in \( \L^2 \) (see Subsection \ref{keysec}). We then use the John-Nirenberg inequality to approximate \( \|(\mathcal{U}_\lambda A) \mathcal{A}_\lambda\|_2 \), resolving the issue and enabling our analysis.

\subsection{Organization of the Paper}
The remainder of the paper is dedicated to the proof of Theorem \ref{thm:Kato}. Section \ref{sec1} is partially preliminary in nature, introducing the functional setting of the paper. Section \ref{max} focuses on maximal functions and Littlewood-Paley theory. In Section \ref{form}, the underlying sesquilinear form is introduced, and maximal accretivity is established. Section \ref{off} is devoted to the proof of off-diagonal estimates, inspired by the corresponding arguments in \cite{EH}. Additional estimates based on these off-diagonal estimates and related to the principal part approximation are derived in Section \ref{additional}. Finally, the proof of Theorem \ref{thm:Kato} is presented in Section \ref{sec2}.

\section{Preliminaries}\label{sec1}

\subsection{Notation}
Given $(x,t)\in\mathbb R^{n}\times\mathbb R=\ree$, we let $\|(x,t)\|:=\max \{|x|, |t|^{1/2}\}$. We call $\|(x,t)\|$
the parabolic norm of $(x,t)$. For a cube $Q= Q_r(x) := (x-r/2,x+r/2]^n \subset \R^n$ with sidelength $r$ and center $x \in \R^n$, and an interval $I=I_r(t):= (t-r^2/2,t+r^2/2]$, we call $\Delta= \Delta_r(x,t):=Q\times I\subset\mathbb R^{n+1}$ a parabolic cube of size $\ell(\Delta):= r$. For every $c>0$, the dilation $c\Delta := cQ \times c^2 I$ is defined by the convention that $cQ := (x-cr/2,x+cr/2]^n $ and $c^2 I = (t-(c r)^2/2,t+(c r)^2/2]$ denote dilates of cubes and intervals, respectively, keeping the center fixed and dilating {their} radius by $c$ and $c^2$, {respectively}.

We will use the notation
\begin{align*}
\barint_{E} g :=\frac {1}{|E|}\int_{E} g(x)\, \d x,\  \bariint_{F} f &:=\frac {1}{|F|}\iint_{F} f(x,t)\, \d x \d t,
\end{align*}
  where $E \subset \mathbb{R}^{n}, F \subset \R^{n+1}$ are bounded Lebesgue measurable subsets and $g: \R^n \to \mathbb{C}^k$, $f: \R^{n+1} \to \mathbb{C}^k$ are locally integrable functions. Given a locally integrable function $f: \ree \to \mathbb{C}^k$, and a bounded Lebesgue measurable $E \subset \R^n$, we define
  \begin{align*}
     \bigl( \barint_{E} f \bigr)\,(t) &:=\frac {1}{|E|}\int_{E} f(x,t)\, \d x,
  \end{align*}
  for every $t \in \R.$ Note that $\barint_{E} f$ is well-defined a.e. on $\R$ and locally integrable by Fubini's theorem. By abuse of notation, we write
  $$\bigl( \barint_{E} f \bigr)\,(x,t) =  \bigl( \barint_{E} f \bigr)\,(t),$$ $x \in \R^n, t \in \R$, when we want to emphasize that $\barint_{E} f$ is considered as a function on $\ree.$

  Throughout the paper we let $\Omega \subset \mathbb{R}^{n+1}$ denote an open set. Given $p \geq 1$, the space  $\L^p(\Omega;\mathbb C^k)$ consists of measurable functions $u: \Omega \to \mathbb{C}^k$ such that
$$\|\cdot\|_{\L^p(\Omega)}=:=\biggl (\iint_{\Omega}|\cdot|^p\, \d x \d t\biggr )^{1/p}<\infty.$$ We let $\L^p:=\L^p(\ree):=\L^p(\ree;\mathbb C)$ and $ \|\cdot\|_{p} :=\|\cdot\|_{\L^p(\ree)}.$ We let $\|\cdot \|_{p \to p}$ denote the norm of an operator from $\L^p(\ree)$ to $\L^p(\ree)$. We define the inner product
$$
\langle f,g\rangle := \iint_{\ree} f \, \cl{g}\, \d x \d t,
$$
for $f ,g \in \L^2(\ree)$.

The spaces $W^{1,2}(\ree), \H^1(\R^n)$ consist of all functions $f\in\L^2(\ree)$, $g\in\L^2(\R^n)$, such that $\|\nabla_x f\|_2 + \|\partial_t f\|_2 < \infty$ and  $\|\nabla_x g\|_{\L^2(\R^n)} < \infty$, respectively. The norms for these spaces are
\begin{align*}
   \|f\|_{W^{1,2}(\ree)} &:=  \bigl(\|f\|^2_2 + \| \nabla_x f\|^2_2 + \| \partial_t f\|_2^2  \bigr)^{1/2},\\
    \|g\|_{\H^1(\R^n)} &:=  \bigl(\|g\|^2_{\L^2(\R^n)} + \| \nabla_x g\|^2_{\L^2(\R^n)} \bigr)^{1/2}.
\end{align*}

\subsection{Convention concerning constants}\label{coo} Concerning the coefficients of our operator, throughout the paper we will assume \eqref{ellip} and \eqref{bmocoeff} stated below. We will refer to $n$ and the constants $c_1,c_2,c_3$, appearing in \eqref{ellip} and \eqref{bmocoeff}, as structural constants. For all constants $A,B\in \mathbb R_+$, the notation $A\lesssim B$  means, unless otherwise stated, that $A/B$ is bounded from above by a positive constant depending at most on the structural constants.  $A\gtrsim B$ should be interpreted similarly. We write $A\sim B$  if $A\lesssim B$ and  $B\lesssim A$. $A\lesssim_\eta B$  means, unless otherwise stated, that $A/B$ is bounded from above by a positive constant depending at most on the structural constants and the constant $\eta$.

\subsection{The energy space}  We define the half-order $t$-derivative $\dhalf$ via the Fourier symbol $ |\tau|^{1/2}$ and we let
\begin{equation}\label{pargradient}
\mbox{${\mathbb D}:= (\nabla_x,\dhalf)$.\footnote{Note that in \cite{N1}, the notation ${\mathbb D}$ has a different meaning than in this paper.}}
\end{equation} The Hilbert transform in $t$, $\HT$, is defined through the Fourier symbol $i\mbox{sgn}(\tau)$.  The inhomogeneous energy space $\E=\E(\ree) $ is equipped with the  Hilbertian norm
\begin{align*}
\|f\|_{{\E}(\ree)}^2 := \|\gradx f\|^2_{2}+\|\dhalf f\|^2_{2}+\|f\|^2_{2}= \langle f,f\rangle_{\E(\ree)},
\end{align*}
where $\langle \cdot,\cdot\rangle_{\E(\ree)}$ is the naturally defined  inner product on $\E(\ree)$. I.e., $\E=\E(\ree)$ is the set of all $f\in\L^2(\ree)$ such that $\|f\|_{{\E}(\ree)}<\infty$. It can be proved that $C_0^{\infty}(\ree)$ is dense in $\E(\ree)$ in the norm $\|\,\|_{\E(\ree)},$ see Lemma 3.3 in \cite{AAEN}.
Let $\Omega=U\times\mathbb R \subset \ree$ where $U \subset \R^n$ is an open and bounded set. We define the subspace $\E(\Omega) \subset \E(\ree)$ to be the set of all functions $f \in \E(\ree)$ such that $f = 0$ a.e. on $(\mathbb R^n\setminus U)\times\mathbb R$. The induced norm and inner product on $\E(\Omega)$ are denoted by $\|\, \|_{\E(\Omega)}, \langle \, , \, \rangle_{\E(\Omega)}$, respectively. We will need the following lemma.

\begin{lem}
\label{space}
    Let $\Omega=U\times\mathbb R \subset \ree$ where $U \subset \R^n$ is an open and bounded set. Then the following statements are true.
     \begin{enumerate}
         \item The space $C^{\infty}_0(\Omega)$ is dense in $\E(\Omega)$.
         \item Multiplication by $C^1(\ree)$-functions is bounded on $\E(\Omega).$
         \item The Hilbert transform $\HT$ preserves $\E(\Omega)$.
     \end{enumerate}
\end{lem}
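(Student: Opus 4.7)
The plan is to prove (3) and (2) first and then build (1) on top of them via a time-then-space approximation exploiting the cylindrical form $\Omega = U\times\R$.

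For (3), since $\HT$ is a Fourier multiplier in $t$ only with unit-modulus symbol $i\,\sgn(\tau)$, it is an $L^2$-isometry by Plancherel and commutes with $\gradx$ and with $\dhalf$; hence $\|\HT f\|_{\E}=\|f\|_{\E}$. Because $\HT$ acts slice-by-slice in $x$, the assumption $f(x,\cdot)=0$ a.e. for $x\notin U$ is preserved, giving $\HT f\in\E(\Omega)$.

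For (2), the $L^2$ and spatial-gradient bounds are immediate from Leibniz. For $\dhalf(mf)$, I would use the Gagliardo identity
$$\|\dhalf g\|_2^2 \simeq \int_{\R^n}\iint_{\R^2}\frac{|g(x,t)-g(x,s)|^2}{|t-s|^2}\,\d t\,\d s\,\d x$$
together with the splitting $m(x,t)f(x,t)-m(x,s)f(x,s) = m(x,t)[f(x,t)-f(x,s)] + [m(x,t)-m(x,s)]f(x,s)$. The first term contributes $\|m\|_\infty\|\dhalf f\|_2$. For the second, I would split the inner integral at $|t-s|=1$: the near-diagonal piece uses $|m(x,t)-m(x,s)|\le\|\partial_t m\|_\infty|t-s|$ to cancel the singular kernel, while the far-diagonal piece uses $|m(x,t)-m(x,s)|\le 2\|m\|_\infty$ paired with the integrable tail $\int_{|u|>1}u^{-2}\,\d u$. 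Preservation of $\E(\Omega)$ is automatic from $\{mf\ne 0\}\subset\{f\ne 0\}$.

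For (1), given $f\in\E(\Omega)$ I would construct approximants in three stages. Stage I: truncate in time via $\eta_R\in C_0^\infty(\R)$ equal to $1$ on $[-R,R]$. The only delicate convergence is $\|\dhalf((\eta_R-1)f)\|_2\to 0$; by the Gagliardo identity and the splitting from (2), the ``differences of $f$'' piece vanishes by dominated convergence, while the ``differences of $\eta_R$'' piece reduces to $\int H_R(s)\|f(\cdot,s)\|_2^2\,\d s$ with $H_R(s)=R^{-1}H(s/R)$ for $H(\sigma):=\int v^{-2}|\eta(\sigma+v)-\eta(\sigma)|^2\,\d v$ bounded, giving a bound by $\|H\|_\infty\|f\|_2^2/R\to 0$. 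Stage II: mollify in time by convolution with $\psi_\delta(t)$, which commutes with $\gradx$ and $\dhalf$ and preserves the spatial support in $U$. Stage III: truncate and mollify in space using $\chi_k\in C_0^\infty(U)$ from an exhaustion of $U$, combined with a spatial mollifier $\phi_\epsilon(x)$ of radius $\epsilon<\dist(\supp\chi_k,\partial U)$. Since $\chi_k$ depends on $x$ only, $\dhalf(\chi_k f)=\chi_k\dhalf f$, so $\dhalf$-convergence is immediate by dominated convergence.

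The main technical obstacle is Stage III: $(\gradx\chi_k)f$ concentrates in a shell near $\partial U$ and need not tend to zero in $L^2$ for a general bounded open $U$. I would address this by a preliminary complex level-set truncation $T_\eta(f):=f(1-\eta/|f|)_+$, which is $3$-Lipschitz on $\IC$. This produces a pointwise bound on the Gagliardo kernel for $T_\eta f-f$ by $16|f(x,t)-f(x,s)|^2/(t-s)^2$, integrable in $(x,t,s)$, and vanishing pointwise as $\eta\to 0$; dominated convergence then yields $T_\eta f\to f$ in $\E$. Since $T_\eta f$ vanishes on $\{|f|\le\eta\}$, a spatial mollification whose radius is small relative to the effective support of $T_\eta f$, combined with a diagonal extraction over the parameters $R,\delta,\eta,k,\epsilon$, produces the desired sequence in $C_0^\infty(\Omega)$.
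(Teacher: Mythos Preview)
Your arguments for (2) and (3) are correct and essentially match the paper's: for (3) the paper uses exactly the Fourier-isometry and slice-wise action of $\HT$, and for (2) it simply cites \cite[Lem.~3.3(ii)]{AAEN}, whereas you spell out a Gagliardo-seminorm computation that achieves the same thing (with the implicit caveat that $m$, $\nabla_x m$, $\partial_t m$ are bounded on $\Omega$, which is all the applications need).

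For (1), your route is genuinely different from the paper's, and there is a gap in Stage~III. The paper does not separate time and space into sequential stages; instead it takes a partition of unity $\{\zeta_{k,m}\}$ subordinate to $V_{k,m}=U_k\times I_m$, where $U_k$ are shells at distance $\sim 1/k$ from $\partial U$, so that each piece $\zeta_{k,m}f$ is already compactly supported away from $\partial U$ before any mollification. Convergence of the mollified pieces is then controlled via the interpolation bound $\|\dhalf g\|_2^2\le\|\partial_t g\|_2\|g\|_2$, reducing everything to standard $W^{1,2}$ approximation. The spatial boundary issue is thus handled structurally, not by a truncation trick.

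Your fix via $T_\eta(f)=f(1-\eta/|f|)_+$ does not resolve the obstacle you correctly identify. The set $\{|f|>\eta\}$ need not have positive distance from $\partial U$: after your Stages~I--II the function is smooth in $t$ but only $H^1$ in $x$, so for $n\ge 2$ there is no continuity in $x$ to force $\{|f(\cdot,t)|>\eta\}\Subset U$, and for $n=1$ a slit domain such as $U=(0,1)\cup(1,2)$ with $f(\cdot,t)$ a tent peaking at $x=1$ gives $\{|f|>\eta\}$ an interval straddling the boundary point. Hence ``mollification whose radius is small relative to the effective support of $T_\eta f$'' has no content here, and the subsequent diagonal extraction cannot produce approximants in $C_0^\infty(\Omega)$. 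To repair your argument you would need either a regularity hypothesis on $U$ (e.g.\ Lipschitz, so that $f(\cdot,t)=0$ a.e.\ on $U^c$ forces $f(\cdot,t)\in H^1_0(U)$, after which cutoffs $\chi_k$ do work via Hardy's inequality), or to adopt the paper's partition-of-unity mechanism, which builds the compact spatial support into the decomposition from the outset.
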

\begin{proof} {The argument is similar to \cite[Thm. 4.2]{Evans}.} Consider $f \in \E(\Omega)$ and let $\eta \in C^{\infty}_0(\ree)$ be a non-negative and radially symmetric approximation of the identity. Let $\eta_{\epsilon}(x,t) :=  {\epsilon^{-(n+1)}} \eta({x}/{\epsilon}, {t}/{\epsilon^2})$ for all $\epsilon >0, (x,t) \in \ree$. For $k\geq 1$, we let
\begin{align*}
 U_k := \bigl\{x\in  U:\ \frac{1}{k+1} \leq \dist(x,\partial U) < \frac{1}{k-1}\bigr\},
\end{align*}
where $\dist(x,\partial U)$ is the standard Euclidean distance {of} $x\in U$ {from} $\partial U$. We also introduce, for $m\geq 1$,
\begin{align*}
 I_m := \bigl\{t\in \mathbb R:\  m-1\leq |t| < m+1\bigr\}.
\end{align*}
We let $V_{k,m}:=U_k\times I_m$, and we introduce non-negative functions $\{\zeta_{k,m}\}$ such that $\zeta_{k,m} \in C_0^{\infty}(V_{k,m})$, and such that $$\sum_{k,m=1}^{\infty} \zeta_{k,m} =1 \quad \textup{ on } U\times \mathbb R.$$
{In fact}, $\{\zeta_{k,m}\}{_{k,m=1}^{\infty}}$ defines a partition of unity of $U\times \mathbb R$. Using  \cite[Lem. 3.3 (ii)]{AAEN}, we have $\zeta_{k,m} f \in \E(\ree)$. We claim that, that there exists, for each $k,m \geq 1$, $\epsilon_{k,m} >0$ such that $\eta_{\epsilon_{k,m}} \ast (\zeta_{k,m} f) \in C^{\infty}_0(V_{k,m})$ and
\begin{align*}
   \| \eta_{\epsilon_{k,m}} \ast (\zeta_{k,m} f) - \zeta_{k,m} f \|_{\E(\ree)} < \frac{\epsilon}{2^{k+m}}.
\end{align*}
The first part of the claim is immediate from the definition. For the second part, by \cite[Lem. 3.3 (i)]{AAEN}, we can assume, without loss of generality, that $f \in C^{\infty}_0(\ree).$ Hence, by standard approximation results in the Sobolev space $W^{1,2}(\ree)$ we see that  there exists $\epsilon_{k,m}>0$ small, such that $$\|\eta_{\epsilon_{k,m}} \ast (\zeta_{k,m} f) - \zeta_{k,m} f\|_{W^{1,2}(\ree)} \leq \frac{\epsilon}{2^{k+m+1}}.$$ Hence, by an application of the Fourier transform and Plancherel's theorem, we obtain
$$
\|\dhalf (\eta_{\epsilon_{k,m}} \ast (\zeta_{k,m} f) - \zeta_{k,m} f)\|^2_2 \leq  \|\partial_t (\eta_{\epsilon_{k,m}} \ast (\zeta_{k,m} f) - \zeta_{k,m} f)\|_2 \| (\eta_{\epsilon_{k,m}} \ast (\zeta_{k,m} f) - \zeta_{k,m} f)\|_2 \leq \frac{\epsilon^2}{4^{k+m+1}},
$$
which concludes the proof of the claim. Note that $$f = \sum_{k,m=1}^{\infty} \zeta_{k,m} f,$$ since $f =0$ a.e. on $\ree \setminus \Omega.$ Hence, for the sequence of $\epsilon_{k,m}$ given by the claim,
\begin{align*}
   \bigl \|\sum_{k,m=1}^{\infty} \eta_{\epsilon_{k,m}} \ast (\zeta_{k,m} f) - f \bigr\|_{\E(\ree)} \leq \sum_{k,m=1}^{\infty} \| \eta_{\epsilon_{k,m}} \ast (\zeta_{k,m} f) - \zeta_{k,m} f\|_{\E(\ree)} \leq \epsilon.
\end{align*}
This  concludes the proof of (i). (ii) is a direct result of \cite[Lem. 3.3 (ii)]{AAEN}. (iii) follows from the fact that $\HT$ preserves the norm $\| \, \|_{\E(\ree)}$ and the fact that $$({\HT}f)(x,t)=(\HT f(x,\cdot))(t)=(H_t0)(t)=0$$
 if $f \in \E(U \times \R), x \in \R^n \setminus U, t \in \R.$\end{proof}

\subsection{The space $\BMO$} The space $\BMO(\mathbb R^n)=\BMO(\mathbb R^n,\d x)$ is the space of real-valued functions with bounded mean oscillation with respect to $\d x$, i.e.,
$f\in \BMO(\mathbb R^n)$ if and only if
\begin{equation}\label{BMOw}
{\|f\|_{\BMO(\mathbb R^n)} :=} \sup_{Q \subset \R^n}\barint_{Q}\bigl |f-\barint_{Q} f\bigr|<\infty,
\end{equation}
where the supremum is taken with respect to all cubes $Q\subset\mathbb R^n$. We generalize the definition to $\BMO(\R^n;\mathbb{C}^k)$ by defining it to be the space of all the vector valued functions  $ (f_1,\cdot \cdot \cdot,f_k) \in \L^1(\mathbb{R}^n;\mathbb{C}^k)$, such that $\Re{f_i}, \Im{f_i} \in \BMO(\mathbb R^n)$ for all $1\leq i \leq k.$

\subsection{Assumptions on the coefficients}
\label{keysec}

The matrix-valued function  $A=A(x,t)=\{A_{i,j}(x,t)\}_{i,j=1}^{n}$ is assumed to have complex measurable entries $A_{i,j}$ which can be decomposed as in \eqref{eq1deg+h}, i.e., $A(x,t)=S(x,t)+D(x,t)$.
The  complex $n\times n$-dimensional matrix-valued function $S=S(x,t)=\{S_{i,j}(x,t)\}_{i,j=1}^{n}$ is assumed to satisfy the condition
\begin{equation}
\label{ellip}
 c_1|\xi|^2\leq \Re (S(x,t) \xi \cdot \cl{\xi}), \qquad
 |S(x,t)\xi\cdot\zeta|\leq c_2|\xi||\zeta|,
\end{equation}
for some $c_1, c_2\in (0,\infty)$ and for all $\xi,\zeta \in \IC^{n}$, $(x,t) \in \R^{n+1}$. Here, $u\cdot v=u_1\bar v_1+...+u_{n}\bar v_{n}$, where $\bar u$ denotes the complex conjugate of $u$ and $u\cdot \cl{v}$ is the standard inner product on $\IC^{n}$.

Concerning $D=D(x,t)=\{D_{i,j}(x,t)\}_{i,j=1}^{n}$ this function is assumed to be a real, measurable, $n\times n$-dimensional, {and} anti-symmetric matrix-valued function, and we write the regularity conditions impose on $D$ as
 \begin{equation}\label{bmocoeff}
D_{i,j}\in \L^\infty(\mathbb R,\BMO (\mathbb R^n,\d x)),\ \|D\|_{\L^\infty(\mathbb R,\BMO (\mathbb R^n,\d x))}\leq c_3,
\end{equation}
for some $c_3\in (0,\infty)$.

Note that the definition in \eqref{bmocoeff} may seem a bit informal and additional explanations are called for. Indeed, note that if $D(x,t)= G(t)$ for $x \in \R^n, t \in \R$, and for some locally integrable real anti-symmetric $n \times n$-dimensional matrix-valued function $G(t)$, then $D$  belongs to $\L^\infty(\mathbb R,\BMO (\mathbb R^n,\d x))$ and its norm in this space is zero. Hence, the anti-symmetric part $D$ need not be locally bounded in any $\L^p$ norm for $p>1$.  Instead, consider the modified matrix-valued function $D - \barint_{Q_0} D$ for an arbitrary but fixed cube $Q_0 \subset \R^n$. Then, by the John-Nirenberg's inequality, see \cite{John}, we obtain
\begin{equation}
\begin{aligned} \label{magicalineq}
    \bigl\| D - \barint_{Q_0} D \bigr\|^p_{\L^p(2^k Q_0 \times I)} & \leq \bigl\| D - \barint_{2^k Q_0} D \bigr\|^p_{\L^p(2^k Q_0 \times I)} + \sum_{i=1}^k \bigl\| \barint_{2^i Q_0} D - \barint_{2^{i-1} Q_0} D \bigr\|^p_{\L^p(2^k Q_0 \times I)} \\ &  \lesssim \bigl\| D - \barint_{2^k Q_0} D \bigr\|^p_{\L^p(2^k Q_0 \times I)} + \sum_{i=1}^k \bigl\| 2 \barint_{2^i Q_0} \bigl|D - \barint_{2^{i} Q_0} D\bigr| \, \bigr\|^p_{\L^p(2^k Q_0 \times I)} \\ &\lesssim_p k |2^k Q_0\times I|,
\end{aligned}
\end{equation}
for any $p\geq 1$, any positive integer $k$, and for any finite interval $I \subset \R$. Here, the implicit constant depends on $n$, $c_3$ and $p$.  Hence, $D - \barint_{Q_0} D$ belongs to $\L^p_{\loc}(\ree)$ for all $p \geq 1$.

Based on the above we can conclude that
\begin{align}\label{bmoo0}
\sup_{I \subset \mathbb{R}}\sup_{Q \subset \mathbb{R}^{n}}\biggl( \barint_{I} \barint_{Q} \bigl|D- \barint_Q D  \bigr|^p\, \d x\d t\biggr )^{1/p} \lesssim_p 1,
\end{align}
for every $p\geq 1$, where the first supremum is taken over all intervals $I \subset \mathbb{R}$, and the second supremum is taken over all cubes $ Q \subset \mathbb{R}^{n}$. It is in this form we will use \eqref{bmocoeff}, and strictly speaking this is the formal definition of what we summarize in \eqref{bmocoeff}. Consequently,
\begin{align}\label{bmoo1}
\sup_{I \subset \mathbb{R}}\sup_{Q \subset \mathbb{R}^{n}} \barint_{I} \barint_{Q} \bigl|A- \barint_Q A  \bigr|^2 \lesssim 1,
\end{align}
where the first supremum is taken over all intervals $I \subset \mathbb{R}$, and the second supremum is taken over all cubes $ Q \subset \mathbb{R}^{n}$.

Note also that if  $Q_0 \subset \R^n$ is an arbitrary but fixed cube, then $\barint_{Q_0} D$ is a real anti-symmetric matrix and $$\mbox{$\div_x ((\barint_{Q_0} D) \nabla_x u) = 0$ weakly for every $u \in \E(\ree)$.}$$
In fact,
\begin{align*}
    \iint_{\ree} \bigl(\barint_{Q_0} D\bigr) \nabla_x u \cdot \nabla_x \phi \, \d x \d t &= \frac 1 2 \sum_{i,j=1}^n\iint_{\mathbb R^{n+1}} \bigl(\barint_{Q_0} D\bigr)_{i,j}(x,t)(\partial_{x_i}u \, \cl{\partial_{x_j}\phi}-\partial_{x_j}u \, \cl{\partial_{x_i} \phi})\, \d x \d t  \\ &= -\frac 1 2 \sum_{i,j=1}^n\iint_{\mathbb R^{n+1}} \bigl(\barint_{Q_0} D\bigr)_{i,j}(x,t)(u \, \cl{\partial_{x_i} \partial_{x_j}\phi}-u \, \cl{\partial_{x_j} \partial_{x_i} \phi})\, \d x \d t = 0,
\end{align*}
for all $\phi \in C^{\infty}_0(\ree).$ Hence,
\begin{align}\label{normaled}
\partial_t -\div_{x} (A(x,t)\nabla_{x} ) =  \partial_t -\div_{x} \bigl(\bigl(A(x,t)- \bigl(\barint_{Q_0} D\bigr)(t)\bigr)\nabla_{x} \bigr),
\end{align}
 weakly in $\E(\ree)$. By replacing $D$ by $D - \barint_{Q_0} D$, we can therefore, without loss of generality and whenever we prefer, assume that
 \begin{align}\label{normal}
 \bigl(\barint_{Q_0} D\bigr )(t)= 0\mbox{ for  ${t \in \R}$ and }D \in \L^p_{\loc}(\ree)\mbox{ for all }p \geq 1.
 \end{align}

\subsection{Hardy spaces} The  Hardy space $\mathcal{H}^1_{\phi}(\mathbb{R}^{n})$ consists of all Lebesgue measurable functions $f:\mathbb{R}^{n} \to \mathbb{R}$ satisfying
$$
\|f\|_{\mathcal{H}_{\phi}^1(\mathbb{R}^{n})}:= \|\sup_{\epsilon>0} |\phi_{\epsilon} \ast f|\,\|_{\L^1(\mathbb{R}^{n})} < \infty,
$$
where $\phi_{\epsilon} = \epsilon^{-n} \phi(x/\epsilon)$ and $\phi \in C^{\infty}_0(B(0,1))$ is an approximation of the identity. Note that given two approximations of the identity, $\phi$ and $\psi$, the norms $\|\,\|_{\mathcal{H}^1_{\phi}(\mathbb{R}^n)}$ and $\|\,\|_{\mathcal{H}^1_{\psi}(\mathbb{R}^n)}$ are equivalent and the spaces $\mathcal{H}^1_{\phi}(\mathbb{R}^{n})$ and
$\mathcal{H}^1_{\psi}(\mathbb{R}^{n})$ are the same. Hence, by abuse of notation, we will denote all of them by $\mathcal{H}^1(\R^n)=\mathcal{H}^1(\R^n,\d x)$ having the norm $\|\,\|_{\mathcal{H}^1(\R^n)}$. For this fact, and for more on Hardy spaces we refer to \cite{fefferman-stein} and to chapter III in \cite{Stein}.

\subsection{Compensated compactness} The following classical compensated compactness estimates, see \cite{coifman1993compensated} and \cite{qian2019parabolic}, are essential when handling
the anti-symmetric part $D$ in $A$.

\begin{lem} Let $n\geq 2$ and let $f,g : \mathbb{R}^{n} \to \mathbb{R}$ be in $\H^1(\mathbb{R}^{n})$. Then,
\begin{align*}
\|\partial_{x_i} f\, \partial_{x_j} g - \partial_{x_i} g\, \partial_{x_j} f\|_{\mathcal{H}^1(\mathbb{R}^{n})} \lesssim& \, \|\nabla_x f\|_2 \|\nabla_x g\|_{2}, \\
\| f \xi  \cdot \nabla_x f  \|_{\mathcal{H}^1(\mathbb{R}^{n})} \lesssim& \,  |\xi|\,
\| f\|_2 \|\nabla_x f\|_2,
\end{align*}
for all $i,j \in \{1,...,n\}$ and for every $\xi \in \mathbb{R}^{n}$ and for  implicit constants depending only on $n$.
\label{p3.1}
\end{lem}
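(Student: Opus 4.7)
The plan is to deduce both estimates from the classical div-curl lemma of Coifman, Lions, Meyer and Semmes~\cite{coifman1993compensated}: if $V \in \L^p(\R^n;\R^n)$ is divergence-free and $h \in \L^{p'}(\R^n)$ with $\gradx h \in \L^{p'}$, then $V \cdot \gradx h \in \mathcal{H}^1(\R^n)$ with norm controlled by $\|V\|_p\,\|\gradx h\|_{p'}$. The strategy for each of (i) and (ii) is to expose the requisite null-form structure via an algebraic identity.

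For (i), I would employ the identity
\begin{align*}
\partial_{x_i} f\,\partial_{x_j} g - \partial_{x_i} g\,\partial_{x_j} f
= \partial_{x_i}(f\,\partial_{x_j} g) - \partial_{x_j}(f\,\partial_{x_i} g)
= V \cdot \gradx f,
\end{align*}
where the vector field $V \in \L^2(\R^n;\R^n)$ has entries $V_i := \partial_{x_j} g$, $V_j := -\partial_{x_i} g$, and $V_\ell := 0$ for $\ell \notin \{i,j\}$. By equality of mixed partials, $\operatorname{div} V = 0$ in $\mathcal{D}'(\R^n)$, and $\|V\|_2 \leq \sqrt{2}\,\|\gradx g\|_2$. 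Applying the CLMS div-curl lemma at the conjugate exponents $p = p' = 2$ then immediately yields $\|V \cdot \gradx f\|_{\mathcal{H}^1} \lesssim \|\gradx f\|_2\,\|\gradx g\|_2$, as required.

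For (ii), I would write $f\,\xi \cdot \gradx f = \tfrac12\,\xi \cdot \gradx(f^2)$ and note that the constant vector $\xi$ is trivially divergence-free. Since $\xi$ lies in no $\L^p$ for finite $p$, CLMS does not apply directly, and I would verify the bound via the grand maximal function characterization of $\mathcal{H}^1$. For $\phi \in C_0^\infty(B(0,1))$ with $\int \phi = 1$ and $\phi_\epsilon := \epsilon^{-n} \phi(\cdot / \epsilon)$, one computes
\begin{align*}
\phi_\epsilon * (f\,\xi \cdot \gradx f)(x) = \tfrac12\,(\xi \cdot \nabla\phi_\epsilon) * f^2(x);
\end{align*}
using $\int \nabla\phi_\epsilon = 0$ together with the factorization $f^2(y) - c(x)^2 = (f(y) - c(x))(f(y) + c(x))$ for $c(x) := \barint_{B(x,\epsilon)} f$, a combination of Cauchy--Schwarz and Poincar\'e's inequality on $B(x,\epsilon)$ produces the pointwise bound
\begin{align*}
\sup_{\epsilon>0} |\phi_\epsilon * (f\,\xi \cdot \gradx f)(x)|
\lesssim |\xi|\, \bigl(M(|\gradx f|^2)(x)\bigr)^{1/2}\, \bigl(M(|f|^2)(x)\bigr)^{1/2},
\end{align*}
where $M$ denotes the Hardy--Littlewood maximal operator.

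The hardest step, and the main obstacle I anticipate, is integrating this pointwise bound for (ii) to obtain the $\L^1$-control of the grand maximal function at the critical endpoint $p = 2$: since $u \mapsto M(|u|^2)^{1/2}$ is only of weak type $(2,2)$, a direct Cauchy--Schwarz falls short, and one must close the argument either via the atomic decomposition of $\mathcal{H}^1$ or by testing against BMO and invoking paraproduct/John--Nirenberg estimates, in the spirit of Theorem~II.1 of~\cite{coifman1993compensated}.
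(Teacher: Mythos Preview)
The paper does not give a proof of this lemma; it simply cites \cite{coifman1993compensated} and \cite{qian2019parabolic} for the result. So there is no argument in the paper to compare against, and your task is really to reproduce the classical proof.

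Your treatment of (i) is correct and is exactly the standard reduction to the CLMS div--curl lemma.

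For (ii), your strategy is also the right one, but you have misdiagnosed the remaining obstacle. The pointwise bound you wrote,
\[
\sup_{\epsilon>0}\bigl|\phi_\epsilon * (f\,\xi\cdot\nabla f)(x)\bigr|
\lesssim |\xi|\,\bigl(M(|\nabla f|^2)(x)\bigr)^{1/2}\bigl(M(|f|^2)(x)\bigr)^{1/2},
\]
is indeed unusable at the endpoint, since $u\mapsto (M|u|^2)^{1/2}$ is not bounded on $\L^2$. But the fix is not atomic decomposition or duality with $\BMO$; it is the same device already present in the proof of Theorem~II.1 in \cite{coifman1993compensated}. In the step where you apply Cauchy--Schwarz to $\barint_{B(x,\epsilon)}|f-c(x)|\,|f+c(x)|$, use instead H\"older with exponents $(p',p)$ for some $p<2$, and replace the $\L^2$-Poincar\'e inequality by the Poincar\'e--Sobolev inequality
\[
\biggl(\barint_{B(x,\epsilon)}|f-c(x)|^{p'}\biggr)^{1/p'}
\lesssim \epsilon\,\biggl(\barint_{B(x,\epsilon)}|\nabla f|^{q}\biggr)^{1/q},
\qquad q=\frac{np'}{n+p'}<2.
\]
This yields the pointwise control $|\xi|\,M_q(\nabla f)(x)\,M_p(f)(x)$ with both $p,q<2$, where $M_r(g):=(M|g|^r)^{1/r}$. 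Since $M_r$ is bounded on $\L^2$ for $r<2$, Cauchy--Schwarz now closes immediately:
\[
\|M_q(\nabla f)\,M_p(f)\|_1\le \|M_q(\nabla f)\|_2\,\|M_p(f)\|_2\lesssim \|\nabla f\|_2\,\|f\|_2.
\]
No appeal to atoms or paraproducts is needed; the condition $n\ge 2$ in the statement is precisely what guarantees enough room in Poincar\'e--Sobolev to push both exponents strictly below~$2$.
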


It is a standard result, see \cite{fefferman-stein}, that $\BMO(\mathbb{R}^n, \d x)$ is the dual of $\mathcal{H}^1(\mathbb{R}^n,\d x)$. Using this duality,
\begin{align}
    \iint_{\mathbb{R}^{n+1}} D_{ij}(x,t) f(x,t)  \d x \d t&\leq \|D\|_{\L^{\infty}(\mathbb{R},\BMO(\mathbb{R}^n,\d x))} \int_{\mathbb R}  \|f(\cdot,t)\|_{\mathcal{H}^1(\mathbb{R}^n, \d x)}\, \d t\notag\\
    &\lesssim \int_{\mathbb R}  \|f(\cdot,t)\|_{\mathcal{H}^1(\mathbb{R}^n, \d x)}\, \d t,
    \label{eqbmo}
\end{align} for every $1\leq i,j \leq n$ and every function $f:\mathbb{R}^{n+1} \to \mathbb{R}$ in  $\L^1_{\mbox{loc}}(\mathbb R,\mathcal{H}^1(\mathbb{R}^n,\d x))$. In general, we will apply Lemma \ref{p3.1} in conjunction with \eqref{eqbmo}. Note also that \begin{align*}
&\iint_{\mathbb R^{n+1}}D(x,t)\gradx u \cdot \cl{\gradx v}\, \d x \d t=\frac 1 2 \sum_{i,j=1}^n\iint_{\mathbb R^{n+1}}D_{i,j}(x,t)(\partial_{x_i}u \, \cl{\partial_{x_j}v}-\partial_{x_j}u \, \cl{\partial_{x_i} v})\, \d x \d t,
\end{align*}
by the anti-symmetry property of $D$, whenever $u,v : \mathbb{R}^{n+1} \to \mathbb{R}$ are in $C_0^\infty(\mathbb{R}^{n+1})$.

\section{Maximal functions and Littlewood-Paley theory}\label{max}

 We introduce the maximal operators in the individual variables
\begin{align*}
	\mathcal{M}^{(1)}(g_1)(x) &:= \sup_{r>0 }{\barint_{Q_r(x)} |g_1|\, \d y},\\
	\mathcal{M}^{(2)}(g_2)(t) &:= \sup_{r>0 }{\barint_{I_r(t)} |g_2|\, \d s},
\end{align*}
for all locally integrable functions $g_1$ and $g_2$ on $\R^{n}$ and $\R$, respectively. Recall that  $\Delta_r(x,t):= Q_r(x)\times I_r(t)\subset\mathbb R^{n+1}$ is the  parabolic cube centered at $(x,t)$ and of size $r$.

Throughout the paper,  $\P\in C_0^\infty(\ree)$ will be a fixed real-valued function  in product form
$$\P(x,t)=\P^{(1)}(x)\P^{(2)}(t),$$
where $\P^{(1)}\in C_0^\infty(\mathbb R^n)$ and $\P^{(2)}\in C_0^\infty(\mathbb R)$ both are radial and have integral $1$. For $x \in \R^n, t \in \R$, we set
\begin{align*}
    &\P_\lambda^{(1)}(x) := \lambda^{-n} \P^{(1)}(x/\lambda), \\
    &\P_\lambda^{(2)}(t) := \lambda^{-2} \P^{(2)}(t/\lambda^2), \\
    & \P_\lambda(x,t):=\P_\lambda^{(1)}(x)\P_\lambda^{(2)}(t)=\lambda^{-n-2}\P^{(1)}(x/\lambda) \P^{(2)}(t/\lambda^2),
\end{align*}
 whenever $\lambda>0$.  We will need the additional cancellation property
\begin{align} \label{momentum}
   \int_{\R^n}  x_{i} \, \P^{(1)}(x) \, \d x =0,
\end{align}
for all integers $1 \leq i \leq n$. This property is a result of $\P^{(1)}$ being radial and $x_{i} \, \P^{(1)}(x)$ being an odd function of $x_i$. With a slight abuse of notation, we let $\P_\lambda^{(1)}$, $\P_\lambda^{(2)}$, and $\P_\lambda$ also denote the associated convolution operators
\begin{align*}
\P_\lambda^{(1)}f(x,t)&:=\P_\lambda^{(1)}\ast f(x,t):=\iint_{\mathbb R^{n}}\P_\lambda^{(1)}(x-y)f(y,t)\, \d y,\\
\P_\lambda^{(2)}f(x,t)&:=\P_\lambda^{(2)}\ast f(x,t):=\iint_{\mathbb R}\P_\lambda^{(2)}(t-s)f(x,s)\, \d s,\\
\P_\lambda f(x,t)&:=\P_\lambda\ast f(x,t):=\iint_{\mathbb R^{n+1}}\P_\lambda(x-y,t-s)f(y,s)\, \d y\d s.
\end{align*}
We note that
\begin{equation}
\begin{aligned}\label{dda}
|\P^{(1)}_\lambda f(x,t)| & \leq  \mathcal{M}^{(1)}(f(\cdot,t))(x),\\ |\P^{(2)}_\lambda f(x,t)| & \leq  \mathcal{M}^{(2)}(f(x,\cdot))(t), \\ |\P_\lambda f(x,t)| & \leq  \mathcal{M}^{(1)}(\mathcal{M}^{(2)}(f(x,\cdot))(t))(x),
\end{aligned}
\end{equation}
almost everywhere for $f \in \L^1_{\loc}(\ree)$, see \cite[Sec.~II.2.1]{Stein}.  The following lemma is a result of the Hardy-Littlewood inequality and \eqref{dda}.
\begin{lem} \label{lem:approx}
The operator $\P_\lambda$ is bounded on $\L^2(\mathbb R^{n+1})$, and $\|\P_\lambda\|_{2\to 2}\lesssim 1$. Furthermore,  if $f\in \L^2(\mathbb R^{n+1})$, then $\P_\lambda f$ is smooth and $\P_\lambda f\to  f$  in $\L^2(\mathbb R^{n+1})$ as $\lambda\to 0$.
\end{lem}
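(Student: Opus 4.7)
The plan is to handle the three statements in sequence, each by a standard approximation-of-the-identity argument. For the $\L^2$-boundedness, I would start from the pointwise domination already recorded in \eqref{dda}, namely $|\P_\lambda f(x,t)| \leq \mathcal{M}^{(1)}(\mathcal{M}^{(2)}(f(x,\cdot)))(x,t)$, and then apply the Hardy--Littlewood maximal theorem twice together with Fubini. Concretely, I would first bound the $\L^2$-norm in $t$ of $\mathcal{M}^{(2)}(f(x,\cdot))$ by $\|f(x,\cdot)\|_{\L^2(\R)}$ for each fixed $x$ and integrate in $x$; then I would repeat the argument in the $x$-variable with $\mathcal{M}^{(1)}$ for each fixed $t$ and integrate in $t$. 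The resulting constant is independent of $\lambda$ because the Hardy--Littlewood maximal operator is defined as the supremum over all radii and does not see the particular scale $\lambda$ entering through $\P_\lambda$.

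For smoothness, the idea is that $\P \in C_0^\infty(\ree)$ implies $\P_\lambda \in C_0^\infty(\ree)$ for every $\lambda > 0$, so every derivative $\partial_x^\alpha \partial_t^\beta \P_\lambda$ is bounded with compact support and therefore lies in $\L^2(\ree)$. Differentiation under the convolution integral is then justified by Cauchy--Schwarz and dominated convergence, which shows that $\P_\lambda f = \P_\lambda \ast f$ is $C^\infty$ whenever $f \in \L^2(\ree)$.

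For the convergence $\P_\lambda f \to f$ in $\L^2(\ree)$ as $\lambda \to 0$, I would use the usual $3\epsilon$-argument. On the dense subclass $C_0^\infty(\ree)$, uniform continuity together with the normalization $\iint \P_\lambda = 1$ and the fact that $\supp \P_\lambda$ shrinks to the origin (in the parabolic sense) yields uniform convergence on a fixed compact set, which upgrades to $\L^2$ convergence by dominated convergence. To extend to a general $f \in \L^2(\ree)$, pick $g \in C_0^\infty(\ree)$ with $\|f-g\|_2 < \epsilon$ and write
\[
\|\P_\lambda f - f\|_2 \leq \|\P_\lambda(f-g)\|_2 + \|\P_\lambda g - g\|_2 + \|g-f\|_2 \lesssim \epsilon + \|\P_\lambda g - g\|_2,
\]
using the uniform bound $\|\P_\lambda\|_{2\to 2} \lesssim 1$ from the first step; letting $\lambda \to 0$ first and then $\epsilon \to 0$ gives the claim.

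I do not anticipate any genuine obstacle here: the lemma is a textbook statement about an anisotropic (parabolic) approximation of the identity, and all the required machinery (the Hardy--Littlewood theorem, Fubini, density of $C_0^\infty(\ree)$ in $\L^2(\ree)$) is available. The only bookkeeping point to keep in mind is the parabolic scaling $(x,t) \mapsto (x/\lambda, t/\lambda^2)$, which is harmless because $\P$ splits as the product $\P^{(1)}(x)\P^{(2)}(t)$ with each factor having integral $1$, so $\iint \P_\lambda = 1$ for every $\lambda > 0$.
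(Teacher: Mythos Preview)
Your proposal is correct and matches the paper's approach: the paper simply states that the lemma is a consequence of the Hardy--Littlewood inequality together with the pointwise domination \eqref{dda}, without spelling out the details. Your write-up is a faithful expansion of exactly that hint, with the smoothness and $\L^2$-convergence parts handled by the standard approximation-of-the-identity arguments that the paper leaves implicit.
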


In the context of square functions and the Littlewood-Paley theory, we will use the notation
      \begin{eqnarray}\label{tnorm}
      |||\cdot|||_{2}:=\biggl (\int_0^\infty\iint_{\mathbb R^{n+1}}|\cdot|^2\, \frac{\d x \d t \d\lambda}\lambda\biggr )^{\frac{1}{2}}.
      \end{eqnarray}
      Lemma \ref{little1}, Lemma \ref{little2}, and Lemma \ref{little3} below will be used frequently in the forthcoming sections.
\begin{lem}\label{little1} For all $f\in \L^2(\mathbb R^{n+1})$, we have
\begin{eqnarray*}\label{li-}|||\lambda\nabla_x \P_\lambda f|||_{2}+|||\lambda^2\partial_t\P_\lambda f|||_{2}+|||\lambda \dhalf \P_\lambda f|||_{2}\lesssim \|f\|_{2}.
\end{eqnarray*}
\end{lem}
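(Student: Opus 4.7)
My approach is to pass to the Fourier side in both the spatial and temporal variables and reduce the square-function estimate to a uniform pointwise bound on a Fourier-multiplier symbol. Writing $\widehat{g}$ for the Fourier transform of $g$ in $(x,t)$ with dual variables $(\xi,\tau)$, the tensor structure of $\P$ gives
\[
\widehat{\P_\lambda f}(\xi,\tau) = \widehat{\P^{(1)}}(\lambda\xi)\,\widehat{\P^{(2)}}(\lambda^2\tau)\,\widehat{f}(\xi,\tau),
\]
and the three operators $\lambda\nabla_x$, $\lambda^2\partial_t$, and $\lambda\dhalf$ attach to this the symbols $i\lambda\xi$, $i\lambda^2\tau$, and $\lambda|\tau|^{1/2}$, respectively. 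By Plancherel's theorem and Fubini, the square of each of the three pieces of $|||\cdot|||_2$ in the lemma becomes $\iint |\widehat{f}(\xi,\tau)|^2\,M_j(\xi,\tau)\,\d\xi\d\tau$, where $M_j(\xi,\tau)$ is the corresponding $\lambda$-integral of the squared symbol times $|\widehat{\P^{(1)}}(\lambda\xi)|^2\,|\widehat{\P^{(2)}}(\lambda^2\tau)|^2\,\d\lambda/\lambda$. The lemma then reduces to the uniform bound $\sup_{(\xi,\tau)} M_j(\xi,\tau) \lesssim 1$.

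To execute this, I would change variables in the $\lambda$ integral according to the parabolic scaling, and then use the Schwartz decay and boundedness of $\widehat{\P^{(1)}}$ and $\widehat{\P^{(2)}}$. For the spatial-gradient term I would set $u=\lambda|\xi|$, use the radiality of $\widehat{\P^{(1)}}$ to view $|\widehat{\P^{(1)}}(\lambda\xi)|$ as a function of $u$ alone, and crudely bound $|\widehat{\P^{(2)}}(\lambda^2\tau)|\leq \|\widehat{\P^{(2)}}\|_\infty$, leaving $\int_0^\infty u\,|\widehat{\P^{(1)}}(u e)|^2\,\d u<\infty$ (for any unit vector $e$). For the $\partial_t$ piece I would instead set $v=\lambda^2|\tau|$, use the evenness of $\widehat{\P^{(2)}}$, bound $|\widehat{\P^{(1)}}(\lambda\xi)|\leq\|\widehat{\P^{(1)}}\|_\infty$, and obtain $\int_0^\infty v\,|\widehat{\P^{(2)}}(v)|^2\,\d v<\infty$. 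The $\dhalf$ term works out identically to the $\partial_t$ one except that the power of $v$ is one lower, yielding the integrable $\int_0^\infty |\widehat{\P^{(2)}}(v)|^2\,\d v$. In every case the absolute boundedness of $\widehat{\P^{(1)}}$ or $\widehat{\P^{(2)}}$ is used to eliminate the factor that still carries the dependence on $(\xi,\tau)$ after the substitution, while the Schwartz decay of the other factor controls the remaining one-dimensional integral.

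There is no substantive obstacle here; the proof is a direct anisotropic Littlewood--Paley computation. The only care required is to match the parabolic dilation $\lambda\mapsto\lambda^2$ in time correctly in each change of variables, and to assign in each case the decaying factor to the variable being integrated while the bounded factor absorbs the remaining dependence on $(\xi,\tau)$. The cancellation property \eqref{momentum} is not needed for this lemma; it will matter only later when one wants $\P_\lambda$ to reproduce polynomials modulo an error of size compatible with square-function estimates.
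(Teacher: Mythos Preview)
Your proof is correct and follows essentially the same Plancherel/Littlewood--Paley strategy as the paper. The only minor variation is that the paper first removes one of the two convolution factors via its uniform $\L^2$-boundedness (coming from the Hardy--Littlewood maximal inequality) before passing to the Fourier side in the remaining variable, whereas you take the full Fourier transform in $(x,t)$ at once and bound the unwanted factor by $\|\cdot\|_\infty$ on the symbol side; both routes exploit the product structure of $\P_\lambda$ in the same way.
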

\begin{proof} For the convenience of the reader, we here provide the proof of the estimate for one of the terms,  $|||\lambda\nabla_x \P_\lambda f|||_2$. The proof uses  only the Fourier transform. We also refer to Lemma 5.1 in \cite{AAEN} for similar derivations. Let $f \in \L^2(\mathbb R^{n+1})$ and let $\P_{\lambda}= \P_\lambda^{(1)}\P_\lambda^{(2)}$ be as before. Then, using the Fourier transform and Plancherel's theorem in $x$, Hardy-Littlewood inequality, and Lemma \ref{lem:approx}, we obtain
\begin{align*}
|||\lambda\nabla_x \P_\lambda f|||^2_{2} &= \int_{0}^{\infty} \iint_{\ree} |\lambda\nabla_x \P_\lambda f|^2 \, \frac{\d x \d t \d \lambda}{\lambda} \\  &= \int_{0}^{\infty} \iint_{\ree} |\P_\lambda^{(2)}\lambda\nabla_x \P_\lambda^{(1)} f|^2 \, \frac{\d x \d t \d \lambda}{\lambda} \\ & \lesssim \int_{0}^{\infty} \iint_{\ree} |\lambda\nabla_x \P_\lambda^{(1)} f|^2 \, \frac{\d x \d t \d \lambda}{\lambda} \\ &=\iint_{\ree}  \int_{0}^{\infty}  |\lambda \chi \widehat{\P_\lambda^{(1)}}(\lambda \chi) \hat{f}(\chi,t)|^2 \, \frac{\d \chi \d t \d \lambda}{\lambda}
\\&=\iint_{\ree} |\hat{f}( \chi,t)|^2  \int_{0}^{\infty}  |\lambda \chi \widehat{\P_\lambda^{(1)}}(\lambda \chi)|^2 \, \frac{\d  \lambda}{\lambda} \, \d \chi \d t
\\& \lesssim  ||f||^2_{2}.
\end{align*}
Here we have used, in the last inequality, that $\widehat{\P_\lambda^{(1)}}$ is a radial Schwartz function, which proves the integral over $\lambda$ is finite and independent of $\chi$. \end{proof}

Recall in the notation ${\mathbb D}= (\nabla_x,\dhalf)$ introduced in \eqref{pargradient}.

\begin{lem} \label{little2} For all $f \in \E(\ree)$, we have
\begin{eqnarray*}
|||\lambda^{-1}(I-\P_\lambda) f|||_{2}\lesssim \|\mathbb D f\|_{2}.
\end{eqnarray*}
\end{lem}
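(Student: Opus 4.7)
The natural strategy is to split the operator and exploit the product structure of $\P_\lambda=\P_\lambda^{(1)}\P_\lambda^{(2)}$ so that the spatial and temporal halves of $\mathbb{D}$ can be estimated independently. I would write
\begin{align*}
I-\P_\lambda=(I-\P_\lambda^{(1)})+\P_\lambda^{(1)}(I-\P_\lambda^{(2)}),
\end{align*}
and treat the two pieces separately, the first by comparison with $\|\nabla_x f\|_2$ and the second by comparison with $\|\dhalf f\|_2$. Since $\|\P_\lambda^{(1)}\|_{2\to 2}\lesssim 1$ uniformly in $\lambda$ by Lemma \ref{lem:approx}, the factor $\P_\lambda^{(1)}$ in the second term can be dropped at the cost of a harmless constant after Fubini.

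For the spatial piece, I would apply Plancherel in $x$ (keeping $t$ frozen) to reduce the estimate to proving the pointwise multiplier bound
\begin{align*}
\int_0^\infty \lambda^{-2}\bigl|1-\widehat{\P^{(1)}}(\lambda\xi)\bigr|^2\,\frac{\d\lambda}{\lambda}\lesssim|\xi|^2,
\end{align*}
uniformly in $\xi\in\mathbb{R}^n$. By scaling $u=\lambda|\xi|$ and using radiality of $\widehat{\P^{(1)}}$, this reduces to checking convergence of $\int_0^\infty u^{-3}|1-\widehat{\P^{(1)}}(u)|^2\,\d u$. Near $u=0$, the cancellation condition \eqref{momentum} together with $\widehat{\P^{(1)}}(0)=1$ gives $|1-\widehat{\P^{(1)}}(u)|\lesssim u^2$ (the linear term vanishes), so the integrand is $\lesssim u$; at infinity, $\widehat{\P^{(1)}}$ is Schwartz, so the integrand is $\lesssim u^{-3}$. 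Both ends are integrable.

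For the temporal piece, after the reduction above, Plancherel in $t$ reduces the bound to
\begin{align*}
\int_0^\infty \lambda^{-2}\bigl|1-\widehat{\P^{(2)}}(\lambda^2\tau)\bigr|^2\,\frac{\d\lambda}{\lambda}\lesssim|\tau|,
\end{align*}
since $\|\dhalf f\|_2^2=\iint|\tau||\hat f(x,\tau)|^2\,\d x\,\d\tau$. The change of variables $u=\lambda^2|\tau|$ converts this to the finiteness of $\int_0^\infty u^{-2}|1-\widehat{\P^{(2)}}(\pm u)|^2\,\d u$. Radiality (i.e.\ evenness on $\mathbb{R}$) of $\P^{(2)}$ forces the first moment to vanish, so $|1-\widehat{\P^{(2)}}(u)|\lesssim u^2$ near $0$, and the Schwartz decay at infinity handles the other tail.

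Combining the two estimates gives $|||\lambda^{-1}(I-\P_\lambda)f|||_2\lesssim\|\nabla_x f\|_2+\|\dhalf f\|_2\lesssim\|\mathbb{D}f\|_2$. The argument involves no real obstacle; the only subtle point is recognizing that the radial and integral-one normalizations of $\P^{(1)},\P^{(2)}$, together with \eqref{momentum}, supply exactly the second-order vanishing of $1-\widehat{\P^{(j)}}$ at the origin needed to offset the factor $\lambda^{-2}$ in the square function.
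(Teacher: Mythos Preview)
Your proof is correct and follows essentially the same approach as the paper: split $I-\P_\lambda$ using the product structure, bound each piece via Plancherel in the relevant variable, and invoke the uniform $\L^2$-boundedness of the remaining convolution factor. The only cosmetic difference is that the paper uses the symmetric decomposition $(I-\P_\lambda)=\P_\lambda^{(2)}(I-\P_\lambda^{(1)})+(I-\P_\lambda^{(2)})$ and appeals to ``classical Littlewood--Paley theory'' rather than writing out the multiplier integrals explicitly; note also that for the temporal piece first-order vanishing of $1-\widehat{\P^{(2)}}$ at the origin already suffices (because of the parabolic scaling $u=\lambda^2|\tau|$), so your invocation of second-order vanishing there is correct but more than needed.
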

\begin{proof} This lemma is proved in \cite{N1}, but we here include a proof relying  only on the Fourier transform for the convenience of the reader. Let $f \in \E(\ree)$. We first note that
\begin{eqnarray}\label{standard}
|||\lambda^{-1}(I-\P_\lambda^{(1)}) f|||_{2}+|||\lambda^{-1}(I-\P_\lambda^{(2)}) f|||_{2}\lesssim ||{\mathbb D} f||_{2},
\end{eqnarray}
for all $f\in \E(\ree)$. Indeed, using the Fourier transform and  Plancherel's theorem in $t$, we obtain
\begin{align*}
    |||\lambda^{-1}(I-\P_\lambda^{(2)}) f|||^2_{2} &= \int_{0}^{\infty} \iint_{\ree} \big|\lambda^{-1}(I-\widehat{\P_\lambda^{(2)}}(\lambda^2 \tau)) \hat{f}\big|^2(x,\tau)  \, \frac{\d x \d \tau \d \lambda}{\lambda} \\ &=  \iint_{\ree} \int_{0}^{\infty} \big|(\tau^{1/2} \lambda)^{-1}(I-\widehat{\P_\lambda^{(2)}}(\lambda^2 \tau)) \widehat{\dhalf f}\big|^2(x,\tau)  \, \frac{\d x \d \tau \d \lambda}{\lambda} \\ &\lesssim \| \dhalf f \|_2^2.
\end{align*}
Here we have used that $\widehat{\P_\lambda^{(2)}}$ is a radial Schwartz function, and classical Littlewood-Paley theory in the variable $t$, in the last inequality. A similar argument in $x$ implies $|||\lambda^{-1}(I-\P_\lambda^{(1)}) f|||_{2} \lesssim \|\nabla_x f\|_2$,  and this concludes the proof of \eqref{standard}. {Since}
\begin{align*}
(I-\P_\lambda)=\P_\lambda^{(2)}(I-\P_\lambda^{(1)})+(I-\P_\lambda^{(2)}),
\end{align*}
 we can use \eqref{dda} and \eqref{standard} {to} deduce
\begin{align*}
|||\lambda^{-1}(I-\P_\lambda) f|||_{2}&\leq |||\lambda^{-1}\P_\lambda^{(2)}(I-\P_\lambda^{(1)}) f|||_{2}+|||\lambda^{-1}(I-\P_\lambda^{(2)}) f|||_{2}\notag\\
&\lesssim |||\lambda^{-1}(I-\P_\lambda^{(1)}) f|||_{2}+|||\lambda^{-1}(I-\P_\lambda^{(2)})f |||_{2}\lesssim ||{\mathbb D} f||_{2}.
\end{align*}
 \end{proof}

In the following, $\mathcal{A}_\lambda$ denotes the dyadic averaging operator, i.e., if $\hat \Delta_\lambda(x,t)$ is the dyadic parabolic cube with size $\lambda \leq \ell(\hat \Delta_\lambda(x,t)) < 2 \lambda$, containing $(x,t)$, then
      \begin{eqnarray}\label{dy}
      \mathcal{A}_\lambda f(x,t):=\bariint_{\hat \Delta_\lambda(x,t)}f.
      \end{eqnarray}
     {In other words}, $\mathcal{A}_\lambda f(x,t)$ is the average of $f$ over $\hat \Delta_\lambda(x,t)$.

\begin{lem} \label{little3} For all $f\in \L^2(\mathbb R^{n+1})$, we have
\begin{eqnarray*}
|||(\mathcal{A}_\lambda -\P_\lambda)f|||_{2}\lesssim \|f\|_{2}.
\end{eqnarray*}
\end{lem}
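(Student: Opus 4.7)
The plan is to establish the square function bound via continuous Cotlar--Stein quasi-orthogonality applied to the self-adjoint family $R_\lambda := \mathcal{A}_\lambda - \P_\lambda$. Both operators are self-adjoint on $\L^2(\ree)$: $\P_\lambda^* = \P_\lambda$ because $\P^{(1)}$ and $\P^{(2)}$ are radial (hence even), and $\mathcal{A}_\lambda^* = \mathcal{A}_\lambda$ because the relation $(y,s) \in \hat\Delta_\lambda(x,t)$ is symmetric in $(x,t)$ and $(y,s)$. Both are uniform $\L^2$-contractions: Lemma \ref{lem:approx} handles $\P_\lambda$, while Jensen's inequality applied to the dyadic conditional expectation gives $\|\mathcal{A}_\lambda\|_{2\to 2} \leq 1$. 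Each reproduces constants, so the crucial cancellation $R_\lambda 1 = 0$ holds.

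The main step is the off-diagonal bound
\[ \|R_\lambda R_\mu\|_{2\to 2} \lesssim \min(\lambda/\mu,\,\mu/\lambda)^{1/2} \qquad (\lambda,\mu>0). \]
By self-adjointness it suffices to treat $\lambda \leq \mu$. I would split $R_\lambda R_\mu = R_\lambda \mathcal{A}_\mu - R_\lambda \P_\mu$ and estimate each piece separately. For the first, the nested-grid identity $\mathcal{A}_\lambda \mathcal{A}_\mu = \mathcal{A}_\mu$ (valid for $\lambda \leq \mu$, since $\mathcal{A}_\mu g$ is constant on the dyadic cubes of size $\sim \mu$ that contain the $\lambda$-cubes) yields $R_\lambda \mathcal{A}_\mu = (I - \P_\lambda)\mathcal{A}_\mu$; because $\mathcal{A}_\mu g$ is piecewise constant on parabolic dyadic cubes of size $\sim \mu$ and $\P_\lambda$ is supported in $\{|x| \lesssim \lambda,\, |t| \lesssim \lambda^2\}$, the function $(I - \P_\lambda)\mathcal{A}_\mu g$ vanishes outside an $O(\lambda)$-parabolic neighborhood of the boundaries of the $\mu$-cubes, a set of relative measure $O(\lambda/\mu)$; this combined with a pointwise domination by a maximal function yields $\|(I - \P_\lambda)\mathcal{A}_\mu g\|_2 \lesssim (\lambda/\mu)^{1/2}\|g\|_2$. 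For the second, $h := \P_\mu g$ is smooth at scale $\mu$; combining the cancellation $R_\lambda 1 = 0$ with a parabolic Poincar\'e inequality on $\lambda$-cubes (for the dyadic part) and the Fourier estimate $|1 - \widehat{\P_\lambda}(\xi,\tau)| \lesssim \lambda|\xi| + \lambda^2|\tau|$ (for the smooth part) produces $\|R_\lambda h\|_2 \lesssim \lambda\|\nabla_x h\|_2 + \lambda^2\|\partial_t h\|_2$; the elementary Fourier bounds $\|\nabla_x \P_\mu g\|_2 \lesssim \mu^{-1}\|g\|_2$ and $\|\partial_t \P_\mu g\|_2 \lesssim \mu^{-2}\|g\|_2$ then give $\|R_\lambda \P_\mu g\|_2 \lesssim (\lambda/\mu)\|g\|_2$.

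With the off-diagonal bound in hand, the square function estimate $|||R_\lambda f|||_2 \lesssim \|f\|_2$ follows from Schur's test applied to the kernel $k(\lambda,\mu) := \min(\lambda/\mu,\mu/\lambda)^{1/2}$ against the measure $\d\lambda/\lambda$ on $(0,\infty)$, since $\int_0^\infty k(\lambda,\mu)\,\frac{\d\mu}{\mu}$ is bounded uniformly in $\lambda$ and symmetrically in $\mu$. The main obstacle is the first off-diagonal estimate: $\mathcal{A}_\mu g$ is discontinuous, so Plancherel-type arguments as in Lemmas \ref{little1} and \ref{little2} are not directly available; the resolution is to exploit the piecewise constant structure of $\mathcal{A}_\mu g$ on dyadic cubes, so that convolution by the compactly supported $\P_\lambda$ perturbs it only in a thin $\lambda$-neighborhood of the cube boundaries.
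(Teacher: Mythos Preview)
The paper does not actually prove this lemma; it simply refers to \cite{N1} and \cite{CNS}. Your Cotlar--Stein/Schur argument is precisely the standard route taken in those references and is correct in outline.

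One point of imprecision: in the bound for $(I-\P_\lambda)\mathcal{A}_\mu$, the phrase ``pointwise domination by a maximal function'' is misleading. Controlling $|(I-\P_\lambda)\mathcal{A}_\mu g|$ by $Mg$ and then integrating over the boundary set $B_\lambda$ does \emph{not} give $(\lambda/\mu)^{1/2}\|g\|_2$, since $Mg$ need not be small on $B_\lambda$. The correct (and equally elementary) argument exploits the piecewise-constant structure directly: on $B_\lambda\cap\Delta$ one has $\int|\mathcal{A}_\mu g|^2 = |c_\Delta|^2\,|B_\lambda\cap\Delta| \lesssim (\lambda/\mu)|c_\Delta|^2|\Delta|$, and $|\P_\lambda\mathcal{A}_\mu g|$ is dominated by the constants $c_{\Delta'}$ of the $O(1)$ neighbouring $\mu$-cubes, so the same bound holds after summing. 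This is surely what you intended, but the maximal-function phrasing obscures it.
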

\begin{proof}  We refer to \cite{N1} and \cite{CNS} for the proof of the lemma.
\end{proof}

\section{The sesquilinear form and maximal accretivity} \label{form}

 Recall that $\HT$ is the Hilbert transform with respect to the $t$-variable. We factorize $\partial_t = \dhalf \HT \dhalf$ and define $\cH$ as an operator from $\E(\ree)$ into its (anti)-dual $\E(\ree)^*$ via
\begin{align}
\label{hidden coercivity}
\langle\cH u,v\rangle_2 := \iint_{\ree} A(x,t) \gradx u \cdot \cl{\gradx v} +\HT \dhalf u \cdot \cl{\dhalf v} \, \d x\d t.
\end{align}
In fact, assuming \eqref{ellip} and \eqref{bmocoeff}, $\cH$ is a bounded operator $\E(\ree)\to\E(\ree)^*$. To see this, we  note that
\begin{align}\label{obs1}
&\iint_{\mathbb R^{n+1}}D(x,t)\gradx u \cdot \cl{\gradx v}\, \d x \d t=\frac 1 2 \sum_{i,j=1}^n\iint_{\mathbb R^{n+1}}D_{i,j}(x,t)(\partial_{x_i}u \, \cl{\partial_{x_j}v}-\partial_{x_j}u \, \cl{\partial_{x_i} v})\, \d x \d t,
\end{align}
by the anti-symmetry property of $D$. Using \eqref{bmocoeff} and the duality between $\BMO(\mathbb R^n,\d x)$ and the Hardy space $\mathcal{H}^1(\R^n,\d x)$, see \eqref{eqbmo},
\begin{align}\label{obs1+}
\biggl |\iint_{\mathbb R^{n+1}}D(x,t)\gradx u \cdot \cl{\gradx v}\, \d x \d t\biggr |&\lesssim \int_{\mathbb R}\|\partial_{x_i}u(\cdot,t)\, \cl{\partial_{x_j}v(\cdot,t)}-\partial_{x_j}u(\cdot,t)\, \cl{\partial_{x_i} v(\cdot,t)}\|_{\mathcal{H}^1(\R^n,\d x)}\, \d t.
\end{align}
Hence, applying Lemma \ref{p3.1} and the Cauchy-Schwarz {inequality}, we conclude that
\begin{align}\label{obs1+gg}
\biggl |\iint_{\mathbb R^{n+1}}D(x,t)\gradx u \cdot \cl{\gradx v}\, \d x \d t\biggr |&\lesssim\int_{\mathbb R}\|\gradx u(\cdot,t)\|_2\|\gradx v(\cdot,t)\|_2\, \d t\lesssim\|\gradx u\|_2\|\gradx v\|_2.
\end{align}
The deductions in \eqref{obs1}-\eqref{obs1+gg} hold for $u,v\in C_0^\infty(\mathbb R^{n+1})$, and hence whenever  $u,v\in \E(\ree)$ as
$C_0^\infty(\mathbb R^{n+1})$ is dense in $\E(\ree)$, see  Lemma 3.3 in \cite{AAEN}. Furthermore, an analogous construction applies to $\cH^*$, the adjoint or dual of $\cH$, which formally coincides with the operator
\begin{eqnarray}\label{eq1deg+adj}
-\partial_t -\div_{x} (  A^\ast(x,t)\nabla_{x} ),\ (x,t)\in \mathbb R^{n+1},
 \end{eqnarray}
 where $A^\ast$ is the (Hermitian) transpose of $A$.

 Assume  \eqref{ellip} and \eqref{bmocoeff}. Throughout the paper we consider the part of $\cH$ in $\L^2(\ree)$, defined by
\begin{align}\label{domain}
\dom(\cH) := \{u \in \E(\ree) : \cH u \in \L^2(\ree) \}.
\end{align}
Recall that by definition this means that if $u \in \E(\ree)$, then
$u\in \dom(\cH)$ if and only if there exists a constant c such that,
\begin{align*}
|\langle\cH u,v\rangle_2|= \biggl |\iint_{\ree} A(x,t) \gradx u \cdot \cl{\gradx v} +\HT\dhalf u \cdot \cl{\dhalf v} \, \d x\d t\biggr |\leq c \|v\|_2,
\end{align*}
for all $v \in \E(\ree)$.  The part of $\cH$ in $\L^2(\ree)$, with maximal domain $\dom(\cH)$, is said to be maximal accretive if for every $\sigma \in \IC$ with $\Re \sigma > 0$, the operator $\sigma + \cH$ is invertible and $\|(\sigma + \cH)^{-1}\| \leq (\Re \sigma)^{-1}$ holds.

\subsection{Maximal accretivity} The operator $\cH$ was introduced as  a bounded operator from $\E(\ree)$ into its (anti)-dual $\E(\ree)^*$ via the sesquilinear form in
\eqref{hidden coercivity}. For short, we identify $\cH=\partial_t -\div_{x} ( A(x,t)\nabla_{x} )$ and  $\cH^\ast=-\partial_t -\div_{x} ( A^\ast(x,t)\nabla_{x} )$. In the following lemma we establish hidden coercivity. In the case $D=0$ this was observed in \cite{AAEN,N1}.

\begin{lem}
\label{Lem: WP on R} Let $\sigma \in \IC$ with $\Re \sigma > 0$. The following assertions hold.
\begin{enumerate}
 \item For each $f \in \E(\ree)^*$ there exists a unique $u \in \E(\ree) $ such that $(\sigma + \cH) u = f$. Moreover,
  \begin{align*}
  \|u\|_{\E(\ree)} \leq \sqrt{2} \max \Big\{\frac{c_2+c_3 + 1}{c_1}, \frac{|\Im \sigma| + 1}{\Re \sigma} \Big\} \|f\|_{\E(\ree) ^*}.
  \end{align*}
\item If $f \in \L^2(\mathbb R^{n+1})$, then
  \begin{align*}
  \|u\|_{2} \leq \frac{1}{\Re \sigma}\|f\|_{2}.
  \end{align*}
  In particular, the part of $\cH$ in $\L^2(\ree)$ is maximal accretive with domain $\dom(\cH)$ as in \eqref{domain}.
\item The space $\dom(\cH)$ is dense in $\E(\ree)$.
\item All the above results hold if we replace $\cH$ by $\cH^*.$
\end{enumerate}
\end{lem}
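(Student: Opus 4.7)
The plan is to solve $(\sigma+\cH)u=f$ in $\E(\ree)^*$ via Lax--Milgram applied to the bounded sesquilinear form
\begin{align*}
a_\sigma(u,v) := \sigma\langle u,v\rangle + \iint_{\ree} A\nabla_x u\cdot\overline{\nabla_x v}\,\d x\d t + \langle \HT \dhalf u,\dhalf v\rangle
\end{align*}
on $\E(\ree)\times\E(\ree)$ (boundedness coming from \eqref{ellip} and the compensated-compactness bound already carried out in \eqref{obs1+gg}), then to read off the $\L^2$-bound by testing against $u$ itself and the density statement from a duality argument.

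The central step is a hidden coercivity: since $\Re\langle\HT\dhalf u,\dhalf u\rangle=0$ (its Fourier symbol is $i\,\mathrm{sgn}(\tau)|\tau|$), testing with $v=u$ would not control $\|\dhalf u\|_2$. I therefore test against $v=Tu$, where $T:=I+\kappa\HT$ for a real parameter $\kappa>0$ to be chosen. Because $\HT$ is a Fourier multiplier commuting with $\nabla_x$ and $\dhalf$ and satisfying $\HT^2=-I$, $T$ is an isomorphism of $\E(\ree)$ with $\|Tu\|_{\E(\ree)}\leq(1+\kappa)\|u\|_{\E(\ree)}$ and $T^{-1}=(1+\kappa^2)^{-1}(I-\kappa\HT)$. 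Expanding $a_\sigma(u,Tu)$: the anti-symmetry of $D$ forces $\Re\iint D\nabla_x u\cdot\overline{\nabla_x u}=0$; the identity $\HT^2=-I$ turns the $\kappa$-contribution of the $\dhalf$-block into $+\kappa\|\dhalf u\|_2^2$; the cross term $\kappa\iint A\nabla_x u\cdot\overline{\nabla_x\HT u}$ is absolutely bounded by $\kappa(c_2+c_3)\|\nabla_x u\|_2^2$, combining \eqref{ellip} on the symmetric part with Lemma \ref{p3.1} and the $\L^2$-isometry of $\HT$ for the BMO part (applied slice-wise in $t$ and summed by Cauchy--Schwarz in $t$); and the zero-order term contributes $\Re\sigma\|u\|_2^2-\kappa|\Im\sigma|\,|\langle u,\HT u\rangle|\geq (\Re\sigma-\kappa|\Im\sigma|)\|u\|_2^2$, since $\langle u,\HT u\rangle$ is purely imaginary. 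Altogether,
\begin{align*}
\Re a_\sigma(u,Tu)\geq (c_1-\kappa C)\|\nabla_x u\|_2^2 + \kappa\|\dhalf u\|_2^2 + (\Re\sigma-\kappa|\Im\sigma|)\|u\|_2^2
\end{align*}
with $C\lesssim c_2+c_3+1$. Choosing $\kappa$ of order $\min\bigl(c_1/(c_2+c_3+1),\,\Re\sigma/(|\Im\sigma|+1)\bigr)$ makes all three coefficients positive, and tracking them against the factor $(1+\kappa)$ from $\|Tu\|_{\E(\ree)}$ recovers the explicit constant in (i).

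With coercivity in hand, (i) follows from Lax--Milgram applied to the coercive bounded form $b(u,v):=a_\sigma(u,Tv)$ (admissible since $T$ is an isomorphism of $\E(\ree)$), the norm bound coming from
\begin{align*}
c_\kappa\|u\|_{\E(\ree)}^2\leq \Re a_\sigma(u,Tu)=\Re\langle f,Tu\rangle\leq(1+\kappa)\|f\|_{\E(\ree)^*}\|u\|_{\E(\ree)}.
\end{align*}
For (ii), with $f\in\L^2(\ree)$ I test against $v=u$: the $D$- and $\HT\dhalf$-terms have vanishing real part, so $\Re\sigma\|u\|_2^2\leq\Re a_\sigma(u,u)=\Re\langle f,u\rangle\leq\|f\|_2\|u\|_2$, giving $\|u\|_2\leq\|f\|_2/\Re\sigma$ and hence maximal accretivity of the part of $\cH$ in $\L^2(\ree)$ with the stated domain.

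For (iii), the continuous embedding $\E(\ree)\hookrightarrow\L^2(\ree)$ is dense (both spaces contain $C_0^\infty(\ree)$ densely), so by Hahn--Banach the dual embedding $\L^2(\ree)\hookrightarrow\E(\ree)^*$ is dense as well; given $u\in\E(\ree)$, approximating $(1+\cH)u$ in $\E(\ree)^*$ by a sequence $f_n\in\L^2(\ree)$ and setting $u_n:=(1+\cH)^{-1}f_n\in\dom(\cH)$, the $\E(\ree)$-continuity of $(1+\cH)^{-1}$ from (i) yields $u_n\to u$ in $\E(\ree)$. Finally, (iv) is obtained by repeating the entire argument with $\cH^*$: \eqref{ellip} and \eqref{bmocoeff} are preserved under $A\mapsto A^*$ (the anti-symmetric part merely flips sign), and the natural form for $\cH^*$ uses $-\HT$, which is absorbed by replacing $T$ by $T^*=I-\kappa\HT$. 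The main obstacle is the coercivity bookkeeping: the BMO part of $A$ must be handled by compensated compactness, while the Hilbert-transform test function must simultaneously unlock control of $\|\dhalf u\|_2$ and of the imaginary part of $\sigma$.
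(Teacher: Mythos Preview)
Your proof is correct and follows essentially the same approach as the paper: the hidden-coercivity trick of testing the sesquilinear form against $(I+\kappa\HT)u$ is exactly what the authors do (they call the parameter $\delta$ and build it into the form $B_{\delta,\sigma}$), and your handling of (ii) and (iv) matches theirs. The only minor deviation is in (iii), where the paper argues by contradiction via Lax--Milgram while you approximate $(1+\cH)u$ directly in $\E(\ree)^*$ by $\L^2$-functions; both are valid, and your route is arguably more transparent.
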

\begin{proof}
 We define the sesquilinear form
$ B_{\delta,\sigma}: \E(\ree) \times\E(\ree) \to \IC$ by
\begin{align*}
 B_{\delta,\sigma}(u,v) := \iint_{\R^{n+1}} A\nabla_x u \cdot \cl{\nabla_x(1+\delta \HT) v} + \HT \dhalf u \, \cl{\dhalf (1+\delta \HT) v}+\sigma u\, \cl{(1+\delta \HT) v}\, \d x\d t,
\end{align*}
where ${0 <\delta< 1}$ is a  real number yet to be chosen.  First, as in \eqref{obs1}-\eqref{obs1+gg}, we deduce
\begin{align}
\label{eq1}
| B_{\delta,\sigma}(u,v)|&{\lesssim} \|\nabla_x u\|_{2}\|\nabla_x v\|_{2}+\| \dhalf u\|_{2}\| \dhalf v\|_{2}+|\sigma|\|u\|_{2}\|v\|_{2}\notag\\
&{\lesssim} (|\sigma|+1)  \|u\|_{\E(\ree)} \|v\|_{\E(\ree)}.
\end{align}
Second, using that
\begin{align}\label{obs2}
\Re D(x,t)\xi\cdot\bar\xi=0,\ (x,t)\in\mathbb R^{n+1},\ \xi\in \mathbb C^n,
\end{align}
by the anti-symmetry property of $D$, we see that
\begin{align}
\label{eq2}
 \Re  B_{\delta,\sigma}(v,v)
\geq \delta \|\dhalf v\|_{2}^2 + (c_1 - c_2\delta-c_3\delta )\|\nabla_x v\|_{2}^2 + (\Re \sigma - \delta |\Im \sigma|) \|v\|_{2}^2,
\end{align}
by \eqref{obs1} and \eqref{obs1+gg}. Choosing $\delta$ such that the factors in front of the second and third term in the last display are no less than $\delta$, for instance choosing
\begin{align*}
    \delta :=  \min \Big \{\frac{c_1}{c_2+c_3 + 1}, \frac{\Re \sigma}{|\Im \sigma|+1} \Big \},
\end{align*}
we obtain the coercivity estimate
\begin{align*}
 \Re B_{\delta,\sigma}(v,v) \geq  \min \Big \{\frac{c_1}{c_2+c_3 + 1}, \frac{\Re \sigma}{|\Im \sigma|+1} \Big \} \|v\|_{\E(\ree)}^2 \qquad (v \in \E(\ree)).
\end{align*}
The Lax-Milgram lemma now yields, for each $f \in {\E(\ree)^*}$,  a unique $u \in \E(\ree)$ satisfying the bound stated in (i), and such that
\begin{align*}
 B_{\delta,\sigma}(u,v) = f((1+\delta \HT)v) \qquad (v \in \E(\ree)).
\end{align*}
 Plancherel's theorem yields that $1+\delta \HT$ is an isomorphism on $\E(\ree)$ for all $\delta\in\mathbb R$ (note that the symbol for $1+\delta \HT$ is $(1+i\sgn(\tau))$ . Thus,
\begin{align*}
 \iint_{\R^{n+1}} A\nabla_x u \cdot \cl{\nabla_x v} + \HT \dhalf u \cdot \cl{\dhalf  v}+\sigma  u\,\cl{v}\, \d x\d t=\iint_{\R^{n+1}} f \,\cl{v}\, \d x\d t\, \qquad (v \in \E(\ree)),
\end{align*}
that is, $(\sigma + \cH) u  = f$. This completes the proof of $\mathrm{(i)}$.

To prove (ii), we first note that from (i) we can conclude that $\sigma + \cH: \dom(\cH) \to \L^2(\mathbb R^{n+1})$ is one-to-one. To check the resolvent estimate required for maximal accretivity, let $f \in \L^2(\mathbb R^{n+1})$. Recall that $A=S+D$. Then, by accretivity of $S$, \eqref{obs2}, and skew-adjointness of the Hilbert transform
\begin{align*}
 \Re \sigma \|u\|_{2}^2
 &\leq \Re \iint_{\R^{n+1}} A\nabla_x u \cdot \cl{\nabla_x u} + \HT \dhalf u \cdot \cl{\dhalf  u}+\sigma u\cdot\cl{u}\, \d x\d t \\
 &= \Re \iint_{\R^{n+1}} f \cdot\cl{u}\,\d x\d t \leq \|f\|_{2} \|u\|_{2},
\end{align*}
and this completes the proof of $\mathrm{(ii)}$.

To prove (iii) we use the argument in Proposition 4.2 in \cite{AAEN}. Note that, by \eqref{eq1} and \eqref{eq2}, the sesquilinear form $ B_{\delta,1}(\cdot \,,\, \cdot ) $ is bounded and accretive on $\E(\ree)$ for  $\delta>0$ small enough. Now, if $\dom(\cH)$ is not dense in $\E(\ree)$, there exists a nontrivial element $v \in \E(\ree)$ such that $\langle u,v \rangle_{\E(\ree)} =0 $ for every $u \in \dom(\cH)$. By Lax-Milgram theorem, there exists a nontrivial element $\tilde{v} \in \E(\ree)$ such that $ B_{\delta,1}(u,\tilde{v}) = \langle u,v \rangle_{\E(\ree)}$ for every $u \in \E(\ree)$. Hence,
$$
0 = \langle (1+\cH) u, (1+\delta \HT) \tilde{v} \rangle\quad\mbox{for every $u \in \dom(\cH)$.}
$$
Using that $(1+\cH):\dom(\cH)\to\L^2(\ree)$ is a bijection, we conclude that $(1+\delta \HT) \tilde{v}=0$ and consequently that $\tilde{v}=0 = v$. This is a contradiction, and hence $\dom(\cH)$ is  dense in $\E(\ree)$.

Finally, the proof of (iv) follows from the same argument as for $\cH,$ but with $-\partial_t$ instead of $\partial_t$ and $A$ replaced by  $A^*= S^*-D$.\end{proof}

\subsection{Bounds for the resolvents} We introduce the resolvent operators
     \begin{eqnarray}\label{resolvents}
     \mathcal{E}_\lambda:=(I+\lambda^2\cH)^{-1},\ \mathcal{E}_\lambda^\ast:=(I+\lambda^2\cH^\ast)^{-1}.
     \end{eqnarray}
 \begin{lem}\label{le8-} For any $\lambda\in \R$, the following estimates hold
       \begin{align*}
         \mathrm{(i)}& \quad  \|\mathcal{E}_\lambda f\|_{2}+\|\lambda \mathbb D\mathcal{E}_\lambda f\|_{2}\lesssim \| f\|_{2},\\\
        \mathrm{(ii)}& \quad  \|\lambda \mathcal{E}_\lambda \dhalf f \|_{2} + \|\lambda^2 \mathbb D\mathcal{E}_\lambda \dhalf f \|_{2}\lesssim \|{f}\|_{2},\\
        \mathrm{(iii)}& \quad \|\lambda \mathcal{E}_\lambda \div_x({\bf f})\|_{2} + \|\lambda^2\mathbb D \mathcal{E}_\lambda \div_x({\bf f}) \|_{2}\lesssim \|{\bf f}\|_{2},
       \end{align*}
        for all $f\in \L^2(\mathbb R^{n+1})$, ${\bf f}\in \L^2(\mathbb R^{n+1};\mathbb C^{n})$. The {above inequalities} also hold with $\mathcal{E}_{\lambda}$ replaced by $\mathcal{E}^*_{\lambda}.$
    \end{lem}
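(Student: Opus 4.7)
The plan is to mimic the hidden-coercivity test from the proof of Lemma~\ref{Lem: WP on R}: in each of (i), (ii), (iii) set $u:=\mathcal E_\lambda g$ where $g\in\E(\ree)^*$ is $f$, $\dhalf f$ (interpreted as the functional $v\mapsto\langle f,\dhalf v\rangle$, which is in $\E(\ree)^*$ with norm $\leq\|f\|_2$ because $\dhalf$ has a real Fourier symbol and is self-adjoint), or $\div_x({\mathbf f})$ (interpreted as $v\mapsto -\langle{\mathbf f},\nabla_x v\rangle$, with norm $\leq\|{\mathbf f}\|_2$), respectively. By Lemma~\ref{Lem: WP on R}, $u\in\E(\ree)$ exists uniquely and $(I+\lambda^2\cH)u=g$ in $\E(\ree)^*$. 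Since the claim is invariant under $\lambda\mapsto -\lambda$ and trivial at $\lambda=0$, assume $\lambda>0$.

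The core step is to test the resolvent equation against $w:=(1+\delta\HT)u\in\E(\ree)$, where $\delta>0$ is a small structural parameter chosen below; $w$ lies in $\E(\ree)$ since the Fourier symbol of $\HT$ has modulus one and commutes with $\nabla_x$ and $\dhalf$. Repeating the computation leading to \eqref{eq2}---i.e., $\Re\iint A\nabla_x u\cdot\cl{\nabla_x u}\geq c_1\|\nabla_x u\|_2^2$ by \eqref{ellip} together with $\Re(D\xi\cdot\bar\xi)=0$; the identities $\dhalf\HT=\HT\dhalf$ and $\HT^*=-\HT$ plus $\|\HT\dhalf u\|_2=\|\dhalf u\|_2$ turning the Hilbert-transform term into $\delta\|\dhalf u\|_2^2$; and the cross term $\delta\Re\iint A\nabla_x u\cdot\cl{\nabla_x\HT u}$ being dominated by $\delta(c_2+C)\|\nabla_x u\|_2^2$ via \eqref{ellip} for the $S$-part and via \eqref{obs1+gg} with $v=\HT u\in\E(\ree)$ for the $D$-part---yields, for $\delta$ small enough,
\begin{align*}
\Re\bigl\langle (I+\lambda^2\cH)u,\,(1+\delta\HT)u\bigr\rangle
\;\geq\;\|u\|_2^2+\lambda^2\Bigl(\tfrac{c_1}{2}\|\nabla_x u\|_2^2+\delta\|\dhalf u\|_2^2\Bigr).
\end{align*}

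With this coercivity in hand, each of (i)--(iii) is closed by Cauchy--Schwarz and Young's inequality on the right-hand side $\Re g((1+\delta\HT)u)$. For (i), $|\Re\langle f,(1+\delta\HT)u\rangle|\leq(1+\delta)\|f\|_2\|u\|_2$ is absorbed against the $\|u\|_2^2$ term and gives $\|u\|_2+\lambda\|\mathbb D u\|_2\lesssim\|f\|_2$. For (ii), using self-adjointness of $\dhalf$ and $[\dhalf,\HT]=0$ the right-hand side equals $\Re\langle f,(1+\delta\HT)\dhalf u\rangle$, bounded by $(1+\delta)\|f\|_2\|\dhalf u\|_2$; Young's inequality with weight $\lambda^2\delta/2$ against $\|\dhalf u\|_2^2$ produces the $\lambda^{-2}\|f\|_2^2$ on the right, hence $\lambda\|u\|_2+\lambda^2\|\mathbb D u\|_2\lesssim\|f\|_2$. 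For (iii), the right-hand side equals $-\Re\langle{\mathbf f},(1+\delta\HT)\nabla_x u\rangle$, bounded by $(1+\delta)\|{\mathbf f}\|_2\|\nabla_x u\|_2$ and absorbed into $\tfrac{\lambda^2 c_1}{2}\|\nabla_x u\|_2^2$, yielding $\lambda\|u\|_2+\lambda^2\|\mathbb D u\|_2\lesssim\|{\mathbf f}\|_2$. The $\cH^*$-statements are verbatim the same: passing to $\cH^*$ replaces $A$ by $A^*=S^*-D$ and $\partial_t$ by $-\partial_t$, and neither the ellipticity of $S^*$, nor the BMO bound on $-D$, nor any step of the test is affected.

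\emph{Main obstacle.} There is no conceptual obstacle beyond the bookkeeping above. The only delicate input is the control of the BMO cross term $\iint D\nabla_x u\cdot\cl{\nabla_x\HT u}$ by $\lesssim\|\nabla_x u\|_2^2$, but this is exactly the estimate already used to prove \eqref{obs1+gg}, applied here to the pair $(u,\HT u)\in\E(\ree)\times\E(\ree)$ via Lemma~\ref{p3.1} together with the $\L^2$-boundedness of $\HT$ in the time variable.
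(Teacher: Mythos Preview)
Your proposal is correct and is precisely the approach the paper intends: the paper's own proof is the single sentence ``The lemma is an immediate consequence of Lemma~\ref{Lem: WP on R} and its proof,'' and what you have written is exactly the explicit unpacking of that sentence---test the resolvent equation against $(1+\delta\HT)u$, reproduce the coercivity estimate \eqref{eq2} with $\sigma=\lambda^{-2}$, and absorb the right-hand side via Cauchy--Schwarz/Young in each of the three cases. Your handling of the BMO cross term via \eqref{obs1+gg} applied to the pair $(u,\HT u)$ is the right point to flag, and it matches the paper's derivation of \eqref{eq2}.
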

\begin{proof} The lemma is an immediate consequences of Lemma \ref{Lem: WP on R} and its proof.\end{proof}

\begin{rem} \label{rem3} Let $U \subset\mathbb R^{n}$ be an open and bounded set. Repeating the argument used in the proof of Lemma \ref{Lem: WP on R}, we can consider the sesquilinear form $B_{\delta,\sigma}$ on $\E(U \times \R) \times \E(U \times \R)$ and prove the coercivity and boundedness for $B_{\delta,\sigma}$ whenever $\delta>0$ is small enough. Hence, by the Lax-Milgram lemma and the fact that $1+\delta \HT$ is an isomorphism on $\E(U \times \R)$, see Lemma \ref{space} (iii), we can conclude that there exist $u, \tilde{u} \in \E(U \times \R)$, for $f \in \L^2(U \times \R),\, \textbf{f} \in \L^2(U \times \R;\mathbb{C}^n)$ given,
such that
$$\begin{aligned}
\iint_{\ree} \lambda^2 A \nabla_x u \cdot \overline{\nabla_x v} + \lambda^2 \HT \dhalf u \cdot \overline{ \dhalf v} + u\, \overline{v} \, \d x \d t &= \iint_{\ree} f\, \overline{v} \, \d x \d t,\\
\iint_{\ree} \lambda^2 A \nabla_x \tilde{u} \cdot \overline{\nabla_x v} + \lambda^2 \HT \dhalf \tilde{u} \cdot \overline{ \dhalf v} + \tilde{u} \, \overline{v} \d x \d t &=- \iint_{\ree} \textbf{f} \cdot \overline{\nabla_x v} \, \d x \d t,
\end{aligned}
$$
for all $v \in \E(U \times \R)$. Furthermore,
 \begin{align*}
        \|u\|_{2}+\|\lambda \mathbb Du\|_{2}&\lesssim \| f\|_{2}\quad\mbox{and}\quad \|\lambda \tilde{u}\|_{2} + \|\lambda^2\mathbb D \tilde{u} \|_{2}\lesssim \|{\bf f}\|_{2}.
       \end{align*}
\end{rem}

\subsection{Reverse H{\"o}lder inequalities for the gradient of resolvents}
We will need the following reverse  H{\"o}lder inequalities for the gradient of resolvents.

\begin{lem}
\label{thm:reverseholdergradient}
There exists  $p>2$, depending only on the structural constants, such that
\begin{equation}
\label{eq:inverseholdergradient-}
    \begin{aligned}
    & \| \nabla_x \mathcal{E}^*_{\lambda} f\|_{\L^p(\Delta)} \lesssim |\Delta|^{\frac{1}{p}- \frac{1}{2}}\|\nabla_x \mathcal{E}^*_{\lambda} f\|_{\L^2(10\Delta)},\\
     &   \| \nabla_x \mathcal{E}_{\lambda} f\|_{\L^p(\Delta)} \lesssim |\Delta|^{\frac{1}{p}- \frac{1}{2}}\|\nabla_x \mathcal{E}_{\lambda} f\|_{\L^2(10\Delta)},
    \end{aligned}
    \end{equation}
    for all parabolic cubes $\Delta \subset \mathbb{R}^{n+1}$ and for all  $f \in \L^2(\R^{n+1})$ such that $f\equiv 0$ on $100\Delta$.
\end{lem}

\begin{proof} We will only discuss the proof in the case of $\mathcal{E}_{\lambda} $ and to prove this we can argue as in the proof of Theorem 2.2. in \cite{seregin2012divergence}. We are considering the equation
    \begin{align}\label{eka-}
        (1+\lambda^2(\partial_t - \div_x A \nabla_x)) u = f,
    \end{align}
    where $u := \mathcal{E}_{\lambda} f$. Note that we are aiming for a local regularity estimate for $\nabla_xu=\nabla_x\mathcal{E}_{\lambda} f$ in $10\Delta$ and that by assumption $f\equiv 0$ on $100\Delta$. Hence, we are considering the homogeneous equation
    \begin{align}\label{eka}
        (1+\lambda^2(\partial_t - \div_x A \nabla_x)) u = 0\quad\mbox{weakly in}\quad 100\Delta.
    \end{align}
The main difference with our situation compared to \cite{seregin2012divergence} is that in  \cite{seregin2012divergence} the authors consider the equation
$(\partial_t - \div_x A \nabla_x)u=0$ and they only consider real coefficients. To outline the reduction to Theorem 2.2. in \cite{seregin2012divergence}, let
    $v(x,t) := u\left(\lambda x,\lambda^2 t\right)$  and note that if $u$ solves \eqref{eka}, then $v$ solves
     \begin{align}\label{ekka}
        (1+\partial_t - \div_x \tilde{A} \nabla_x) v =0\quad\mbox{weakly in}\quad 100\tilde\Delta,\ \tilde\Delta:=\lambda^{-1}\Delta
    \end{align}
     and where $\tilde{A}(x,t) =A(\lambda x, \lambda^2 t).$  Assume that we have been able to prove that
    \begin{equation}
\label{eq:inverseholdergradient-+}
    \begin{aligned}
     &   \| \nabla_xv\|_{\L^p(\tilde\Delta)} \lesssim |\tilde\Delta|^{\frac{1}{p}- \frac{1}{2}}\|\nabla_x v\|_{\L^2(10\tilde\Delta)}.
    \end{aligned}
    \end{equation}
    Then, using \eqref{eq:inverseholdergradient-+} we see that
    \begin{align*}
\| \nabla_x \mathcal{E}_{\lambda} f\|_{\L^p(\Delta)}
 &=    \| \nabla_x u\|_{\L^p(\Delta)}\\
 &=  \left(\iint_{\Delta} \left| \nabla_x  \left(v\left(\lambda^{-1} x, \lambda^{-2} t\right) \right) \right|^p\, \d x \d t \right)^{\frac{
1}{p}}=\lambda^{\frac{n+2}{p}}\lambda^{-1}
\left(\iint_{\tilde \Delta} \left| \nabla_x  v\left(x,t\right)  \right|^p\, \d x \d t \right)^{\frac{
1}{p}}\\
&\lesssim \lambda^{\frac{n+2}{p}} \lambda^{-1}\left|\tilde \Delta\right|^{\frac{1}{p}- \frac{1}{2}}
\left(\iint_{10\tilde \Delta} \left| \nabla_x  v\left(x,t\right)  \right|^2 \, \d x \d t \right)^{\frac{
1}{2}}= |\Delta|^{\frac{1}{p}- \frac{1}{2}}  \left(\iint_{10\Delta} |   \nabla_x u(x, t)|^2 \, \d x \d t \right)^{\frac{
1}{2}}
\\
  & \lesssim   \left|\Delta\right|^{\frac{1}{p}- \frac{1}{2}}\|\nabla_x u\|_{\L^2(10 \Delta)}=\left|\Delta\right|^{\frac{1}{p}- \frac{1}{2}}\|\nabla_x \mathcal{E}_{\lambda} f\|_{\L^2(10 \Delta)}.
\end{align*}
Hence, we only have to prove  \eqref{eq:inverseholdergradient-+}, i.e,  we have to prove that if $v$ satisfies \eqref{ekka}, then  there exists  $p>2$, depending only on the structural constants, such that
\begin{equation}
\label{eq:inverseholdergradient-+ha}
    \begin{aligned}
     &   \biggl (\bariint_{\tilde\Delta}| \nabla_xv|^p\, \d x\d t\biggr )^{1/p}\lesssim \biggl (\bariint_{10\tilde\Delta}| \nabla_xv|^2\, \d x\d t\biggr )^{1/2}.
    \end{aligned}
    \end{equation}
    To establish this result following the proof of Theorem 2.2 in \cite{seregin2012divergence}, we note that the argument relies on standard tools, including Caccioppoli (energy) estimates for parabolic equations, the Poincaré-Sobolev inequality, and the Sobolev inequality in spatial variables. The consideration of \( A = S + D \), where \( S \) is complex but coercive, is permissible within the framework of the argument. Additionally, the inclusion of \( v \) with the term \( (\partial_t - \div_x \tilde{A} \nabla_x) v \) in the equation does not affect the validity of the reasoning. For further details, we refer the reader to \cite{seregin2012divergence}, and for additional context, see also \cite{GS}. This completes the proof.\end{proof}

    \begin{rem}\label{conseq}
A consequence of Lemma \ref{thm:reverseholdergradient} is the following: consider a parabolic cube \( \Delta \subset \mathbb{R}^{n+1} \) and a function \( g \in \L^2(\mathbb{R}^{n+1}) \) with \( g \equiv 0 \) outside of \( \Delta \). Then, for all \( k \geq 11 \), we have
\begin{equation}
\label{eq:inverseholdergradient}
\begin{aligned}
    & \| \nabla_x \mathcal{E}^*_{\lambda} g \|_{\L^p(2^{k+1} \Delta \setminus 2^k \Delta)}
    \lesssim |2^k \Delta|^{\frac{1}{p} - \frac{1}{2}} \| \nabla_x \mathcal{E}^*_{\lambda} g \|_{\L^2(2^{k+5} \Delta \setminus 2^{k-4} \Delta)}, \\
    & \| \nabla_x \mathcal{E}_{\lambda} g \|_{\L^p(2^{k+1} \Delta \setminus 2^k \Delta)}
    \lesssim |2^k \Delta|^{\frac{1}{p} - \frac{1}{2}} \| \nabla_x \mathcal{E}_{\lambda} g \|_{\L^2(2^{k+5} \Delta \setminus 2^{k-4} \Delta)}.
\end{aligned}
\end{equation}
\end{rem}

\section{Off-diagonal estimates}\label{off}
 We will need the following lemma.

\begin{lem}
There exists a constant $\theta \in (0,1)$, which only depends on the structural constants, such that the following holds. If  $\chi \in \mathbb{R}^n$ satisfies $|\chi|\leq\theta$, then
    \begin{align*}
   \mathrm{(i)}& \quad  \|e^{\big(\frac{x \cdot \chi}{\lambda} \big)} \mathcal{E}_\lambda f \|_{2}\lesssim  \|e^{\big(\frac{x \cdot \chi}{\lambda} \big)}  f \|_{2},\\
\mathrm{(ii)}& \quad  \| \lambda e^{\big(\frac{x \cdot \chi}{\lambda} \big)} \nabla_x  \mathcal{E}_\lambda f \|_{2} \lesssim \|e^{\big(\frac{x \cdot \chi}{\lambda} \big)}  f  \|_{2}, \\
\mathrm{(iii)}& \quad \| \lambda e^{\big(\frac{x \cdot \chi}{\lambda} \big)} \mathcal{E}_\lambda  \,\div_x \textbf{f}\,\|_{2} \lesssim \|e^{\big(\frac{x \cdot \chi}{\lambda} \big)}  \textbf{f}\,\|_{2},
    \end{align*}
for all $f \in \L^2(\ree)$ and $\textbf{f} \in \L^2(\ree;\mathbb{C}^n).$ The estimates also hold with $\mathcal{E}_{\lambda}$ replaced by $\mathcal{E}^*_{\lambda}.$
\label{l0}

\end{lem}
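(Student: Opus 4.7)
My plan is to prove all three bounds simultaneously via a Davies-type exponential perturbation argument, adapting the elliptic method of \cite{EH} to the parabolic energy space $\E(\ree)$. Fix $\chi\in\R^n$ with $|\chi|$ yet to be constrained and set $\eta(x) := e^{x\cdot\chi/\lambda}$; since $\eta$ depends only on $x$ it commutes with $\HT$ and $\dhalf$ and satisfies $\nabla_x\eta = (\chi/\lambda)\eta$. Because $\eta$ is unbounded, to test $(I+\lambda^2\cH)u = f$ legitimately against $\eta^2 u$, I would first work with the bounded truncation $\eta_\epsilon(x) := \eta(x)/(1+\epsilon\eta(x))$, which satisfies $|\nabla_x\eta_\epsilon/\eta_\epsilon|\leq |\chi|/\lambda$ uniformly in $\epsilon$ and converges monotonically to $\eta$; by Lemma~\ref{space}(ii) the truncated test function $\eta_\epsilon^2 u$ lies in $\E(\ree)$ whenever $u$ does.

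For (i) and (ii), setting $u := \mathcal{E}_\lambda f$, testing against $\eta_\epsilon^2 u$ and taking real parts, the $\HT\dhalf$ block vanishes by the skew-adjointness of $\HT$ and the $x$-only dependence of $\eta_\epsilon^2$. The pointwise identity $\Re(D\xi\cdot\bar\xi) = 0$ from \eqref{obs2} together with ellipticity of $S$ give a lower bound of $c_1\lambda^2\|\eta_\epsilon\nabla_x u\|_2^2$ for the diagonal term $\Re\,\lambda^2\iint\eta_\epsilon^2 A\nabla_x u\cdot\overline{\nabla_x u}$. The $S$-piece of the cross term $2\lambda^2\Re\iint \eta_\epsilon S\nabla_x u\cdot\bar u\,\nabla_x\eta_\epsilon$ is controlled by boundedness of $S$ and Young's inequality, giving $\tfrac{c_1}{4}\lambda^2\|\eta_\epsilon\nabla_x u\|_2^2 + C|\chi|^2\|\eta_\epsilon u\|_2^2$.

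The essential obstacle is the $D$-piece of the cross term. Writing $w_\epsilon := \eta_\epsilon u$, combining the anti-symmetry identity $D\xi\cdot\chi = -(D\chi)\cdot\xi$, the algebraic identity $\eta_\epsilon^2\bar u\nabla_x u = \bar w_\epsilon\nabla_x w_\epsilon - |w_\epsilon|^2\nabla_x\eta_\epsilon/\eta_\epsilon$, and the cancellation $(D\chi)\cdot\chi = 0$, this piece reduces after taking real parts (using $\Re(\bar w_\epsilon\nabla_x w_\epsilon) = \tfrac12\nabla_x|w_\epsilon|^2$) to controlling $\lambda\iint (D\chi)\cdot\nabla_x|w_\epsilon|^2$. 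A further anti-symmetrization of the integrand, using $D_{ji} = -D_{ij}$, rewrites
\begin{align*}
\iint (D\chi)\cdot\nabla_x|w_\epsilon|^2\,\d x\,\d t = -\sum_{i,j,k}\iint D_{ij}\,(w_\epsilon)_k\bigl(\chi_i\partial_{x_j}(w_\epsilon)_k - \chi_j\partial_{x_i}(w_\epsilon)_k\bigr)\,\d x\,\d t,
\end{align*}
where $(w_\epsilon)_k \in \{\Re w_\epsilon,\Im w_\epsilon\}$. Slicewise in $t$, $\BMO(\R^n)$--$\mathcal{H}^1(\R^n)$ duality together with the second compensated-compactness estimate of Lemma~\ref{p3.1}, applied with $\xi := \chi_i e_j - \chi_j e_i$ (so $|\xi|\leq|\chi|$), bounds each summand by $\lesssim c_3|\chi|\|(w_\epsilon)_k(\cdot,t)\|_2\|\nabla_x(w_\epsilon)_k(\cdot,t)\|_2$, and Cauchy--Schwarz in $t$ yields $|\iint (D\chi)\cdot\nabla_x|w_\epsilon|^2|\lesssim |\chi|\|w_\epsilon\|_2\|\nabla_x w_\epsilon\|_2$. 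Expanding $\nabla_x w_\epsilon = \eta_\epsilon\nabla_x u + u\nabla_x\eta_\epsilon$ and applying Young again converts this into $\tfrac{c_1}{4}\lambda^2\|\eta_\epsilon\nabla_x u\|_2^2 + C|\chi|^2\|w_\epsilon\|_2^2$. Combining all absorptions with Cauchy--Schwarz on the right-hand side yields
\begin{align*}
(1 - C|\chi|^2)\|\eta_\epsilon u\|_2^2 + \tfrac{c_1}{2}\lambda^2\|\eta_\epsilon\nabla_x u\|_2^2 \leq C'\|\eta f\|_2^2
\end{align*}
for constants $C,C'$ depending only on structural constants. Choosing $\theta$ with $C\theta^2\leq\tfrac12$ and sending $\epsilon\downarrow 0$ via monotone convergence proves (i) and (ii).

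For (iii), the same argument applies to $v := \lambda\mathcal{E}_\lambda\div_x\mathbf{f}$, which satisfies $(I+\lambda^2\cH)v = \lambda\div_x\mathbf{f}$ weakly; testing against $\eta_\epsilon^2 v$ and integrating by parts produces the right-hand side $-\lambda\iint\mathbf{f}\cdot\overline{\nabla_x(\eta_\epsilon^2 v)}$, which splits via Young's inequality into an absorbable gradient piece and a controlled multiple of $\|\eta\mathbf{f}\|_2^2$. The bounds for $\mathcal{E}^*_\lambda$ follow from the identical argument applied to $\cH^*$, using $A^* = S^*-D$ (so $-D$ remains anti-symmetric with the same $\BMO$ norm) and $-\partial_t = -\dhalf\HT\dhalf$. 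The principal difficulty throughout is the $D$-cross term: since $D\in\BMO$ cannot be controlled pointwise, one must exploit simultaneously the anti-symmetry of $D$ (to produce the null-form structure $\chi_i\partial_{x_j} - \chi_j\partial_{x_i}$), the cancellation $(D\chi)\cdot\chi = 0$ (to kill the lower-order $|w_\epsilon|^2$ piece), and the compensated-compactness estimates of Lemma~\ref{p3.1} (to place the null form in $\mathcal{H}^1$ slicewise) before invoking $\BMO$--$\mathcal{H}^1$ duality.
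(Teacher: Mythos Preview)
Your approach differs from the paper's: you keep the global solution $u=\mathcal{E}_\lambda f$ and truncate the exponential weight, whereas the paper first solves $(I+\lambda^2\cH)u^R=f$ on the bounded cylinder $B_R(0)\times\R$ (via Remark~\ref{rem3}), tests against the \emph{exact} weight $e^{2x\cdot\chi/\lambda}u^R$ (legitimate because $u^R$ has compact $x$-support), and then sends $R\to\infty$ by Rellich--Kondrachov and Fatou. The paper's route keeps the key identity $\nabla_x\eta/\eta=\chi/\lambda$ exact, so the $D$-cross term collapses cleanly to $\lambda\iint D\nabla_x\bigl(|u^R|^2e^{2x\cdot\chi/\lambda}\bigr)\cdot\chi$, and Lemma~\ref{p3.1} applies directly with $f=u^R e^{x\cdot\chi/\lambda}$.

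Your argument has a gap precisely at this point. With your truncation $\eta_\epsilon=\eta/(1+\epsilon\eta)$ one has $\nabla_x\eta_\epsilon/\eta_\epsilon=(\chi/\lambda)\cdot(1+\epsilon\eta)^{-1}$, not $\chi/\lambda$. Tracing your reduction, the $D$-cross term does \emph{not} reduce to $\lambda\iint(D\chi)\cdot\nabla_x|w_\epsilon|^2$ as you claim, but to
\[
-\lambda\iint_{\ree}\frac{1}{1+\epsilon\eta}\,(D\chi)\cdot\nabla_x|w_\epsilon|^2\,\d x\,\d t,
\]
and the variable prefactor $(1+\epsilon\eta)^{-1}$ obstructs a direct application of Lemma~\ref{p3.1}: the integrand is no longer of the form $f\,\xi\cdot\nabla_x f$, and bounded multipliers do not preserve $\mathcal{H}^1(\R^n)$. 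This is not fatal --- using $(D\chi)\cdot\chi=0$ once more one may rewrite the integrand as $(D\chi)\cdot\nabla_x|v_\epsilon|^2$ with $v_\epsilon:=w_\epsilon/\sqrt{1+\epsilon\eta}$, and then Lemma~\ref{p3.1} applies to $v_\epsilon$ with $\|v_\epsilon\|_2\le\|w_\epsilon\|_2$ and $\|\nabla_x v_\epsilon\|_2\lesssim\|\nabla_x w_\epsilon\|_2+(|\chi|/\lambda)\|w_\epsilon\|_2$ --- but this extra step is essential and is missing from your writeup. Alternatively, you could adopt the paper's localisation-in-$x$ scheme, which avoids the issue entirely.
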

\begin{proof} To prove the lemma, we will use a method  similar to the one used in Lemma 1 in \cite{EH}. Let $C_R(0):= B_R(0) \times \R$, where $B_R(0) \subset \mathbb R^n$ denotes the open ball centered  at the origin and of radius $R>0.$  By Remark \ref{rem3}, there exists $u^R \in \E(C_R(0))$ such that
\begin{equation}
      \iint_{\mathbb{R}^{n+1}} u^R \, \overline{\phi} + \lambda^2 (S+D) \nabla_x  u^R \, \cdot \, \overline{\nabla_x  \phi} + \lambda^2 H_t \dhalf u^R\, \overline{\dhalf \phi}\, \d x \d t = \iint_{\mathbb{R}^{n+1}} f\, \overline{\phi}\, \d x \d t,
      \label{eqsolution}
\end{equation}
for all $\phi \in \E(C_R(0))$, and
\begin{align}
\label{ubound}
\|u^R\|_2+\|\lambda\nabla_x u^R \|_2+\|\lambda\dhalf u^R \|_2\lesssim \|f\|_{2}.
\end{align}
By Lemma \ref{space} (i), there exists a sequence $\{u^R_i\}$, $u^R_i \in C^{\infty}_0(C_R(0);\mathbb{C})$, such that
\begin{align}\label{bla}\lim_{i \to \infty} \iint_{\mathbb{R}^{n+1}} |u^R-u^R_i|^2 + |\nabla_x(u^R-u^R_i)|^2 + |\dhalf (u^R-u^R_i)|^2\, \d x \d t = 0.
\end{align}
Let  $\phi := e^{2\big(\frac{x \cdot \chi}{\lambda}\big)} \, u^R$, and $ \phi_i := e^{2\big(\frac{x \cdot \chi}{\lambda}\big)}  \, u^R_i$, for $i = 1, 2, \cdot \cdot \cdot$.  Using  Lemma \ref{space} (ii) and \eqref{bla}, we have $\phi \in \E(C_R(0))$ and
$$
\lim_{i \to \infty} \|\phi- \phi_i\|_{\E(C_R(0))}  =0.
$$
Hence,
\begin{align*}
\Re \iint_{\mathbb{R}^{n+1}} H_t \dhalf u^R\,  \overline{\dhalf (e^{2\big(\frac{x \cdot \chi}{\lambda} \big)}  u^R)} \, \d x \d t =& \lim_{i \to \infty} \Re \iint_{\mathbb{R}^{n+1}} H_t \dhalf u^R_i \,  \overline{\dhalf (e^{2\big(\frac{x \cdot \chi}{\lambda} \big)}  u^R_i )} \, \d x \d t.
\end{align*}
As $\dhalf H_t \dhalf =\partial_t$, integration by parts yields
\begin{align*}
 \Re \iint_{\mathbb{R}^{n+1}} H_t \dhalf u^R_i \,  \overline{\dhalf (e^{2\big(\frac{x \cdot \chi}{\lambda} \big)}  u^R_i )} \, \d x \d t =& \Re \iint_{\mathbb{R}^{n+1}} (\overline{u^R_i} \partial_t u^R_i)  e^{2\big(\frac{x \cdot \chi}{\lambda}\big)} \, \d x \d t \\ =& \frac{1}{2}  \iint_{\mathbb{R}^{n+1}}\partial_t  (|u^R_i|^2) \,  e^{2\big(\frac{x \cdot \chi}{\lambda} \big)} \, \d x \d t=0.
\end{align*}
Putting the conclusions in the last displays together, we can conclude that
\begin{align}\label{lyllo}
\Re \iint_{\mathbb{R}^{n+1}} H_t \dhalf u^R\,  \overline{\dhalf (e^{2\big(\frac{x \cdot \chi}{\lambda} \big)}  u^R)} \, \d x \d t =0.
\end{align}
Thus, setting $ \phi = e^{2\big(\frac{x \cdot \chi}{\lambda}\big)} \, u^R$ in \eqref{eqsolution}, together with \eqref{lyllo}, yield
\begin{align}\label{lyllo1-}
&\Re \iint_{\mathbb{R}^{n+1}} e^{2\big(\frac{x \cdot \chi}{\lambda} \big)} ( |u^R|^2  +\lambda^2 (S+D)\nabla_x  u^R \, \cdot   (\nabla_x\overline{u^R} +\frac {2\chi}\lambda \overline{u^R})  )\, \d x\d t\notag\\
&=\Re \iint_{\mathbb{R}^{n+1}} e^{2\big(\frac{x \cdot \chi}{\lambda}\big)} \, f\, \overline{u^R}\,\d x \d t.
\end{align}
By the anti-symmetry of $D$,
\begin{align*}
\Re \iint_{\mathbb{R}^{n+1}} e^{2\big(\frac{x \cdot \chi}{\lambda} \big)} \lambda^2 D\nabla_x  u^R \, \cdot   (\nabla_x\overline{u^R} +\frac {2\chi}\lambda \overline{u^R})\, \d x\d t&= \iint_{\mathbb{R}^{n+1}} \lambda D\nabla_x  |u^R|^2 \, \cdot {\chi} e^{2\big(\frac{x \cdot \chi}{\lambda}\big)} \, \d x\d t\\
&=\iint_{\mathbb{R}^{n+1}} \lambda D{\chi}\cdot\nabla_x  (|u^R|^2e^{2\big(\frac{x \cdot \chi}{\lambda}\big)} ) \, \d x\d t.
\end{align*}
Applying Lemma \ref{p3.1} in conjunction with \eqref{eqbmo},
\begin{align}\label{lyllo1}
 |\iint_{\mathbb{R}^{n+1}} \lambda D\nabla_x  (|u^R|^2e^{2\big(\frac{x \cdot \chi}{\lambda}\big)}) \, \cdot {\chi}  \, \d x\d t|\notag
 &\lesssim  \lambda |\iint_{\mathbb{R}} \| |u^R|e^{\big(\frac{x \cdot \chi}{\lambda}\big)} \chi \cdot \nabla_x  (|u^R|e^{\big(\frac{x \cdot \chi}{\lambda}\big)}) \,  \|_{\mathcal{H}^1(\R^n,\d x)}  \d t
 \\&\lesssim|\chi|\|\lambda\nabla_x  (u^Re^{\big(\frac{x \cdot \chi}{\lambda}\big)})\|_2 \|u^Re^{\big(\frac{x \cdot \chi}{\lambda}\big)}\|_2 \notag\\
&\lesssim|\chi|\|\lambda e^{\big(\frac{x \cdot \chi}{\lambda}\big)} \nabla_x u^R \|_2 \|e^{\big(\frac{x \cdot \chi}{\lambda}\big)} u^R \|_2\notag \\
&+|\chi|^2 \|e^{\big(\frac{x \cdot \chi}{\lambda}\big)} u^R \|_2^2.
\end{align}
Combining \eqref{lyllo1-}, \eqref{lyllo1}, and using  \eqref{ellip}, we deduce
\begin{align*}
&\|e^{\big(\frac{x \cdot \chi}{\lambda}\big)} u^R\|_2^2+\|\lambda e^{\big(\frac{x \cdot \chi}{\lambda}\big)} \nabla_x u^R \|_2^2\\&\lesssim |\chi|\|\lambda e^{\big(\frac{x \cdot \chi}{\lambda}\big)} \nabla_x u^R \|_2 \|e^{\big(\frac{x \cdot \chi}{\lambda}\big)} u^R \|_2
+|\chi|^2 \| e^{\big(\frac{x \cdot \chi}{\lambda}\big)} u^R\|_2^2+\| e^{2\big(\frac{x \cdot \chi}{\lambda}\big)} f u^R\|_1
\\ &\leq |\chi|\|\lambda e^{\big(\frac{x \cdot \chi}{\lambda}\big)} \nabla_x u^R \|_2 \|e^{\big(\frac{x \cdot \chi}{\lambda}\big)} u^R \|_2
+|\chi|^2 \| e^{\big(\frac{x \cdot \chi}{\lambda}\big)} u^R\|_2^2+\| e^{\big(\frac{x \cdot \chi}{\lambda}\big)}  f\|_2  \|e^{\big(\frac{x \cdot \chi}{\lambda}\big)} u^R\|_2,
\end{align*}
where we used Cauchy-Schwarz inequality on the last inequality. Hence, from the last display we see that if $|\chi|\leq\theta$ with $\theta$ small enough, which is independent of $R$, then
\begin{align}
\label{mainineq}
\| e^{\big(\frac{x \cdot \chi}{\lambda}\big)} u^R\|_2+\|\lambda e^{\big(\frac{x \cdot \chi}{\lambda}\big)} \nabla_x u^R \|_2&\lesssim \| e^{\big(\frac{x \cdot \chi}{\lambda}\big)} f\|_2,
\end{align}
and we emphasize that this estimate is uniform in $R$. We now want to pass to the limit $R\to\infty$ in \eqref{mainineq} to conclude the estimate with $u^R$ replaced by $\mathcal{E}_\lambda f$. To do this we first note that if $U\subset\mathbb R^n$ is a bounded domain such that $U\subset B_R(0)$, then \eqref{eqsolution} implies
\begin{equation}
      \iint_{\mathbb{R}^{n+1}} u^R \, \overline{\phi} + \lambda^2 (S+D) \nabla_x  u^R \, \cdot \, \overline{\nabla_x  \phi} - \lambda^2 u^R\, \overline{\partial_t\phi}\, \d x \d t = \iint_{\mathbb{R}^{n+1}} f\, \overline{\phi}\, \d x \d t,
      \label{eqsolutionaga}
\end{equation}
for all $\phi \in C_0^\infty(U\times\mathbb R)$. By \eqref{ubound}, or \eqref{mainineq}, and \eqref{eqsolutionaga} we have that
$\{u^R\}$ and $\{\partial_tu^R\}$ are uniformly bounded in $\L^2(I,H^1(U))$ and $\L^2(I,H^{-1}(U))$, respectively, for any compact time interval $I\subset\mathbb R$. Note that $H^1(U)$ is compactly embedded in $\L^2(U)$ by the Rellich-Kondrachov theorem, and that $\L^2(U)$ is continuously embedded in $H^{-1}(U)$. Therefore,  by the Aubin-Lions lemma, see \cite{lions1969}, we can conclude that $\{u^R\}$ has a strongly convergent subsequence in $\L^2(I,\L^2(U))$ for any compact interval $I$.  Consequently, using also the observation that $D^\ast\nabla_x\phi\in \L^2(\mathbb R^{n+1})$ whenever $\phi\in C_0^\infty(\mathbb R^{n+1})$, we can conclude based on \eqref{ubound} and by letting $R\to\infty$ that, up to a subsequence, $u^R$ converges to $\mathcal{E}_\lambda f$ strongly in $\L^2_{\loc}(\ree)$, and $\nabla_x u^R, \dhalf u^R$ converges to $\nabla_x \mathcal{E}_\lambda f, \dhalf \mathcal{E}_{\lambda} f$ weakly in $\L^2_{\loc}(\ree)$, respectively.  Hence, by Fatou's lemma and \eqref{mainineq}, we obtain (i) and (ii) of the lemma. Finally, (iii) can be proved by applying a similar reasoning to the equation
 \begin{align*}
      \iint_{\mathbb{R}^{n+1}} u\, \overline{\phi} + \lambda^2 (S+D) \nabla_x  u\, \cdot \, \overline{\nabla_x  \phi} + \lambda^2 H_t \dhalf u\, \overline{\dhalf \phi}\, \d x \d t = -\iint_{\mathbb{R}^{n+1}} \textbf{f}\, \cdot \overline{ \nabla_x  \phi}\, \d x \d t.
 \end{align*}
 We omit further details. The proofs in the case of $\mathcal{E}^*_{\lambda}$ are analogous.
\end{proof}

Given $E$, a compact subset of $\mathbb{R}^{n+1}$, we let
$$\pi(E) := \{t \in \R: (x,t) \in E \textup{ for some } x \in \R^n\}.$$
In other words, $\pi(E)$ is the orthogonal projection of $E$ onto the time-axis. Note that $\pi(E)$ is closed as $\pi(E)$ is compact and as the projection map is continuous.

\begin{lem}\label{lenewpiece}
Assume that $E,F$ are compact subsets of $\mathbb{R}^{n+1}$ and let $$d := \inf \{ |t-s|^{\frac{1}{2}}: t \in \pi(E), s \in \pi(F)\}.$$ Then, there exists a constant $c$, $1\leq c<\infty$, depending only on the structural constants, such that
\begin{align*}
\label{eq:off}
 \mathrm{(i)}& \quad \iint_{F} |\mathcal{E}_\lambda f|^2+|\lambda \nabla_x \mathcal{E}_\lambda f|^2\, \d x\d t \lesssim  e^{-\frac{d}{c\lambda}} \iint_{E} |f|^2\,\d x \d t,\notag\\
\mathrm{(ii)}& \quad \iint_{F}|\lambda\mathcal{E}_\lambda \div_x({\bf f})|^2 \, \d x \d t \lesssim e^{-\frac{d}{c\lambda}} \iint_{E} |{\bf f}|^2\, \d x \d t,
\end{align*}
whenever $f:\ree\to \IC$ and ${\bf f}:\ree\to \IC^{n}$  are supported in $E$, $f \in \L^2(\ree)$, and ${\bf f} \in \L^2(\ree;\mathbb{C}^n)$. The same statements are true with $\mathcal{E}_\lambda$ replaced by $\mathcal{E}_\lambda^\ast$.
\end{lem}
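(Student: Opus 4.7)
The plan is to prove Lemma \ref{lenewpiece} by a Davies--Gaffney type argument with an exponential weight in the time variable. Since the distance $d$ depends only on the time-projections of $E$ and $F$, and since neither the hypothesis nor the conclusion is decreased when we replace $E,F$ by the larger time-tubes $\R^n\times \pi(E)$ and $\R^n\times \pi(F)$, I may assume without loss of generality that $E$ and $F$ have this product form. Fix a universal $c_0>0$ to be chosen. The case $\lambda \geq c_0 d$ is trivial: the factor $e^{-d/(c\lambda)}$ is then bounded below by a constant, and both (i) and (ii) reduce to the non-weighted bounds of Lemma \ref{le8-}. Hence it remains to treat $\lambda < c_0 d$.

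Set $\alpha := d/(2c_0)$ and choose a bounded Lipschitz function $\rho:\R\to\R$ with $\rho \equiv 0$ on $\pi(E)$, $\rho \equiv \alpha$ on $\pi(F)$, and $|\rho'|\leq 1/(2\lambda)$ a.e.; for instance $\rho(t) := \alpha\,\max\bigl(0,\,1 - \dist(t,\pi(F))/d^2\bigr)$, mollified, works, since $\dist(\pi(E),\pi(F))\geq d^2$ and $\alpha/d^2 = 1/(2c_0 d) \leq 1/(2\lambda)$. Let $u := \mathcal{E}_\lambda f$ and $v := e^{\rho/\lambda}u$. Because $e^{\rho/\lambda}$ is a smooth, bounded, Lipschitz multiplier depending only on $t$, $v \in \E(\ree)$: the functions $v$ and $\gradx v = e^{\rho/\lambda}\gradx u$ lie in $\L^2$, and the $\L^2$-boundedness of the commutator $[\dhalf, e^{\rho/\lambda}]$ (a Fourier multiplier of order $1/2$ commuted with a Lipschitz multiplication) gives $\dhalf v\in \L^2$. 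Because $e^{\rho/\lambda}$ commutes with $\divx(A\gradx\,\cdot\,)$, the distributional commutator identity $[\cH, e^{\rho/\lambda}] = [\partial_t, e^{\rho/\lambda}] = (\rho'/\lambda)e^{\rho/\lambda}$ yields
\[(I+\lambda^2\cH)v \;=\; e^{\rho/\lambda}(I+\lambda^2\cH)u + \lambda\rho' v \;=\; e^{\rho/\lambda}f + \lambda\rho' v\]
as an identity in $\E(\ree)^*$.

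Testing the above against $v$ in the sesquilinear form \eqref{hidden coercivity} gives
\[\|v\|_2^2 + \lambda^2\iint A\gradx v\cdot\overline{\gradx v} + \lambda^2\iint H_t \dhalf v\,\overline{\dhalf v} \;=\; \iint e^{\rho/\lambda}f\,\bar v + \lambda\iint\rho'|v|^2.\]
Taking real parts, the Hilbert-transform term vanishes by skew-adjointness of $H_t$, and by the ellipticity of $S$ together with $\Re(D\xi\cdot\bar\xi)=0$ from \eqref{obs2} (interpreted via the BMO--Hardy duality argument of \eqref{obs1}--\eqref{obs1+gg}) the spatial term is at least $c_1\lambda^2\|\gradx v\|_2^2$. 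The $\lambda\rho'$ term is absorbed into $\tfrac12\|v\|_2^2$ using $\lambda\|\rho'\|_\infty\leq 1/2$, giving
\[\tfrac12\|v\|_2^2 + c_1\lambda^2\|\gradx v\|_2^2 \;\leq\; \|e^{\rho/\lambda}f\|_2\,\|v\|_2,\]
so $\|v\|_2 + \lambda\|\gradx v\|_2 \lesssim \|e^{\rho/\lambda}f\|_2 = \|f\|_2$, the last equality because $\rho$ vanishes on $\pi(E)\supset\supp f$. Since $\rho = \alpha$ on $\pi(F)$,
\[\iint_F|u|^2 + \iint_F|\lambda\gradx u|^2 \;\leq\; e^{-2\alpha/\lambda}\bigl(\|v\|_2^2 + \lambda^2\|\gradx v\|_2^2\bigr) \;\lesssim\; e^{-d/(c_0\lambda)}\iint_E|f|^2,\]
which is (i).

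Part (ii) is analogous with $u := \mathcal{E}_\lambda\divx(\mathbf{f})$ and $(I+\lambda^2\cH)v = \divx(e^{\rho/\lambda}\mathbf{f}) + \lambda\rho' v$ (using that $e^{\rho/\lambda}$ is constant in $x$); testing against $v$, integrating by parts in $x$, and applying Young's inequality with constants tuned against $c_1\lambda^2\|\gradx v\|_2^2$ yield $\lambda\|v\|_2 \lesssim \|\mathbf{f}\|_2$, whereupon the exponential factor produces the claimed estimate. The proofs for $\cH^*$ are identical: the transpose $A^* = S^* - D$ still has an elliptic complex symmetric part and a real anti-symmetric BMO part, and the sign change in $\partial_t$ does not affect the skew-adjointness of $H_t$. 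I expect the main technical obstacle to be the rigorous justification of the conjugation step, in particular the membership $v\in\E(\ree)$ and the commutator identity for $\cH$; both ultimately reduce to the standard $\L^2$-boundedness of the commutator of $\dhalf$ against a Lipschitz multiplier, which can be handled by a mollification and approximation device in the spirit of the proof of Lemma \ref{l0}.
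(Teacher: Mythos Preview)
Your argument is correct and follows essentially the same Davies--Gaffney strategy as the paper: an exponential weight depending only on $t$, which commutes with the spatial part of $\cH$ and makes the anti-symmetric $D$-contribution vanish pointwise. The packaging differs in two minor ways. First, the paper tests the equation for $u$ directly against $u\phi^2$ with $\phi(t)=e^{\alpha\eta(t)}-1$ and computes $\Re\iint H_t\dhalf u\,\overline{\dhalf(u\phi^2)}=-\iint|u|^2\phi\,\partial_t\phi$ by approximating $u$ in $\E(\ree)$ by $C_0^\infty$ functions, rather than conjugating to $v=e^{\rho/\lambda}u$ and invoking the commutator $[\partial_t,e^{\rho/\lambda}]$; the approximation needed to justify your commutator identity is exactly the same density argument the paper writes out, so your closing remark is on target. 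Second, the paper obtains (ii) from (i) (for $\mathcal{E}_\lambda^\ast$) by a one-line duality instead of your direct proof; both routes work here.
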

\begin{proof}  To prove this lemma we follow the approach outlined in \cite{AAEN} and \cite{N1}, both of which build on ideas developed in the proof of the original elliptic Kato problem in \cite{AHLMcT}. Using Lemma \ref{le8-},  we can assume, without loss of generality, that $\lambda \leq d$. Let $u :=\mathcal{E}_\lambda f$ and $\phi(t) := e^{\alpha \eta(t)} -1 $, where $\alpha = {\epsilon d}/{\lambda}$ and $\epsilon>0$ is small {enough} to be chosen later. Here $\eta \in C^{\infty}_0(\R)$ is a real-valued cut-off function satisfying $\eta = 1$ on ${\pi(F)}$, $\eta = 0$ on ${\pi(E)},$ and $|\partial_t \eta| \lesssim {1}/{d^2}.$  Then, $u \phi^2 \in \E(\ree)$ by Lemma \ref{space} (ii) and
\begin{align*}
      \iint_{\mathbb{R}^{n+1}} u \, \overline{u \phi^2} + \lambda^2 (S+D) \nabla_x  u \, \cdot \, \overline{\nabla_x  (u \phi^2)} - \lambda^2 \dhalf u\, \overline{H_t \dhalf (u \phi^2)}\, \d x \d t = \iint_{\mathbb{R}^{n+1}} f\, \overline{u \phi^2}\, \d x \d t=0,
\end{align*}
as the supports of  $f$ and $u \phi^2$ are disjoint. Since $\phi$ is real-valued and only depends on time,
$$
\Re D \, \nabla_x u \cdot \overline{\nabla_x (u \phi^2)} =\phi^2 \Re D\, \nabla_x u \cdot \overline{\nabla_x u} =0.
$$
Consequently, from the last two display we have
\begin{equation}\label{fy1}
   \iint_{\ree} |u|^2 \phi^2 + \lambda^2 \phi^2 \Re S\, \nabla_x  u \, \cdot \, \overline{\nabla_x  u} - \lambda^2 \Re \dhalf u\, \overline{H_t \dhalf (u \phi^2)}\, \d x \d t =0.
\end{equation}
Now, once the term involving $D$ is gone by the above deductions, the remaining argument follows along the lines of the corresponding arguments in  \cite{AAEN} and \cite{N1}. Since $u \phi^2 \in \E(\ree)$ and $u \in \E(\ree)$,  there exists a sequence $u_i \in C^{\infty}_0(\ree)$ such that $$\lim_{i \to \infty} \|(u_i-u) \phi^2 \|_{\E(\ree)} + \|u_i-u\|_{\E(\ree)}=0.$$ Hence,
\begin{align*}
\Re \iint_{\mathbb{R}^{n+1}} -\dhalf u\,  \overline{H_t \dhalf \bigl(  u \phi^2 \bigr)} \, \d x \d t= \lim_{i \to \infty} \Re \iint_{\mathbb{R}^{n+1}} -\dhalf u_i \,  \overline{H_t \dhalf \bigl( u_i \phi^2 \bigr)} \, \d x \d t.
\end{align*}
As $\dhalf H_t \dhalf =\partial_t$, and as $u_i \in C^{\infty}_0(\ree)$, $u_i\phi^2 \in C^{\infty}_0(\ree)$, integration by parts yields
\begin{align*}
\Re \iint_{\mathbb{R}^{n+1}} -\dhalf u_i \,  \overline{H_t \dhalf \bigl( u_i \phi^2 \bigr)} \, \d x \d t=& \Re \iint_{\mathbb{R}^{n+1}} (\overline{u_i} \partial_t u_i) \phi^2 \, \d x \d t\\=& \frac{1}{2}  \iint_{\mathbb{R}^{n+1}} \partial_t(|u_i|^2) \, \phi^2 \, \d x \d t \\=& \frac{1}{2}  \iint_{\mathbb{R}^{n+1}}  -|u_i|^2 \,  \partial_t  \phi^2 \, \d x \d t.
\end{align*}
Taking the limit $i \to \infty$ in the last display, we get
\begin{align}\label{fy2}
&\Re \iint_{\mathbb{R}^{n+1}} -\dhalf u\,  \overline{H_t \dhalf \bigl(  u \phi^2 \bigr)} \, \d x \d t=\iint_{\ree} -|u|^2 \phi \, \partial_t \phi \, \d x \d t.
\end{align}
Combining \eqref{fy1} and \eqref{fy2},
\begin{align}\label{fy3}
   \iint_{\ree} |u|^2 \phi^2 + \lambda^2 \phi^2 \Re S\, \nabla_x  u \, \cdot \, \overline{\nabla_x  u} - \lambda^2|u|^2 \phi \, \partial_t \phi\, \d x \d t =0.
\end{align}
Using \eqref{fy3} and the coercivity of $S$, see \eqref{ellip}, we deduce that
\begin{align*}
     \iint_{\ree} |u|^2 \phi^2 +  c_1\lambda^2  |\nabla_x u|^2 \phi^2 \, \d x \d t &\lesssim \lambda^4 \iint_{\ree} |u|^2 |\partial_t \phi|^2 \, \d x \d t\\
     &\lesssim \frac{\epsilon^2 \lambda^2 }{ d^2} \iint_{\ree} |u|^2 |\phi+1|^2 \, \d x \d t.
\end{align*}
Note that in the second inequality we have used the specific form of $\phi$ and the bound on $|\partial_t\eta|$.  Choosing $\epsilon$ small enough in the last display, and only depending on the structural constants,
we first conclude that
\begin{align*}
   & \iint_{\ree} |u|^2 \, \phi^2\, \d x \d t + c_1\lambda^2 \iint_{\ree} |\nabla_x u|^2 \, \phi^2\, \d x \d t\leq  \frac 1 4\iint_{\ree} |u|^2 |\phi+1|^2\, \d x \d t,
\end{align*}
and then
\begin{align*}
   & \iint_{\ree} |u|^2 \, \phi^2\, \d x \d t + \lambda^2 \iint_{\ree} |\nabla_x u|^2 \, \phi^2\, \d x \d t\lesssim \iint_{E} |f|^2\, \d x \d t,
\end{align*}
by an application of Lemma \ref{le8-} (i). Using that $\phi =e^{\alpha {\eta}}-1$, and once again Lemma \ref{le8-} (i), the inequality in the last display implies that
\begin{align*}
   & \iint_{\ree} |u|^2 \, e^{2\alpha \eta}\, \d x \d t + \lambda^2 \iint_{\ree}|\nabla_x u|^2 \, e^{2\alpha {\eta}}\, \d x \d t\lesssim \iint_{E} |f|^2\, \d x \d t.
\end{align*}
As $\eta=1$ on $F$,
\begin{align*}
   & e^{2\alpha} \iint_{F} |u|^2\, \d x \d t +  e^{2\alpha}\lambda^2 \iint_{F}|\nabla_x u|^2 \, \d x \d t\lesssim \iint_{E} |f|^2\, \d x \d t,
\end{align*}
proving the conclusion in $\mathrm{(i)}$. The proof of $\mathrm{(ii)}$ follows by duality, see Lemma 4.4 in \cite{AAEN} for example.
\end{proof}

\begin{lem} There exists a constant $c$, $1\leq c<\infty$, depending only on the structural constants, such that
$$
\begin{aligned}
\mathrm{(i)}& \quad \|\mathcal{E}_\lambda (f \, 1_{2^{k+1} \Delta \setminus 2^k \Delta})\|_{\L^2(\Delta)} \lesssim  e^{-\big(\frac{2^k \ell(\Delta)}{c\lambda}\big)}  \|f\|_{\L^2(2^{k+1} \Delta \setminus 2^k \Delta)}, \\
\mathrm{(ii)}& \quad  \|\lambda \nabla_x  \mathcal{E}_\lambda (f \, 1_{2^{k+1} \Delta \setminus 2^k \Delta})\|_{\L^2(\Delta)} \lesssim  e^{-\big(\frac{2^k \ell(\Delta)}{c\lambda}\big)}  \|f\|_{\L^2(2^{k+1} \Delta \setminus 2^k \Delta)}, \\
\mathrm{(iii)}& \quad \|\lambda \mathcal{E}_\lambda   \div_x (\textbf{f}\, \, 1_{2^{k+1} \Delta \setminus 2^k \Delta})\|_{\L^2(\Delta)}  \lesssim  e^{-\big(\frac{2^k \ell(\Delta)}{c\lambda}\big)}  \|\textbf{f}\,\|_{\L^2(2^{k+1} \Delta \setminus 2^k \Delta)},
\end{aligned}
$$
for all parabolic cubes $\Delta \subset \mathbb{R}^{n+1}$ and $f \in \L^2(\R^{n+1}),\, \textbf{f} \in \L^2(\R^{n+1};\mathbb{C}^{n}).$ The same statements are true with $\mathcal{E}_\lambda$ replaced by $\mathcal{E}_\lambda^\ast$.
\label{le8-+}
\end{lem}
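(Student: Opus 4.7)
The plan is to combine Lemma~\ref{l0} (spatial weighted bounds) with Lemma~\ref{lenewpiece} (temporal off-diagonal estimates) after splitting the annular region into its spatial and temporal parts. Writing $\Delta = Q \times I$ with $\ell(\Delta) = r$ and center $(x_\Delta, t_\Delta)$, I would decompose
\[
2^{k+1}\Delta \setminus 2^k\Delta = E_{\mathrm{sp}} \cup E_{\mathrm{tm}}, \qquad E_{\mathrm{sp}} := (2^{k+1}Q \setminus 2^k Q) \times 2^{k+1}I, \quad E_{\mathrm{tm}} := 2^k Q \times (2^{k+1}I \setminus 2^k I),
\]
split $f = f_{\mathrm{sp}} + f_{\mathrm{tm}}$ accordingly and treat each piece separately. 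For $f_{\mathrm{tm}}$ the temporal projections $\pi(\mathrm{supp}(f_{\mathrm{tm}})) \subset 2^{k+1}I \setminus 2^k I$ and $\pi(\Delta) = I$ are separated by at least $(4^k-1)r^2/2$, so $d \gtrsim 2^k r$ in the notation of Lemma~\ref{lenewpiece}, and parts (i)--(ii) of that lemma directly yield the desired decay $e^{-2^k r/(c\lambda)}$ for all three quantities appearing in our (i)--(iii).

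The spatial piece $f_{\mathrm{sp}}$ needs slightly more work. I would further decompose
\[
2^{k+1}Q \setminus 2^k Q = \bigcup_{j=1}^{n}\bigcup_{\sigma = \pm 1} A_{j,\sigma}, \qquad A_{j,\sigma} := \{x \in 2^{k+1}Q : \sigma(x - x_\Delta)_j \geq 2^{k-1}r\},
\]
which covers the annulus since $x \notin 2^k Q$ forces $|(x-x_\Delta)_j| \geq 2^{k-1}r$ for at least one coordinate $j$. Fix one pair $(j,\sigma)$, set $\chi := -\sigma\theta e_j$ with $\theta$ from Lemma~\ref{l0}, and put $w(x) := (x - x_\Delta)\cdot\chi/\lambda$. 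Multiplying the conclusion of Lemma~\ref{l0}(i) by the constant $e^{-x_\Delta \cdot \chi/\lambda}$ gives $\|e^w \mathcal{E}_\lambda g\|_2 \lesssim \|e^w g\|_2$ for all $g \in \L^2(\ree)$. Applied to $g := f \cdot 1_{A_{j,\sigma} \times 2^{k+1}I}$, the standard insert/remove trick yields
\begin{align*}
\|\mathcal{E}_\lambda g\|_{\L^2(\Delta)}^2 \leq \Bigl(\sup_{x \in Q} e^{-2w}\Bigr) \|e^w \mathcal{E}_\lambda g\|_2^2 \lesssim \Bigl(\sup_{x \in Q} e^{-2w}\Bigr)\Bigl(\sup_{x \in A_{j,\sigma}} e^{2w}\Bigr) \|g\|_2^2.
\end{align*}
On $Q$ one has $|(x-x_\Delta)_j| \leq r/2$, hence $|w| \leq \theta r/(2\lambda)$; on $A_{j,\sigma}$ one has $w \leq -\theta 2^{k-1}r/\lambda$; so the product is bounded by $e^{\theta r/\lambda - \theta 2^k r/\lambda} \leq e^{-2^k r/(c\lambda)}$ for a suitable $c$. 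Summing over the $2n$ pieces gives (i); parts (ii) and (iii) follow identically using (ii)--(iii) of Lemma~\ref{l0}, and the same argument applies verbatim to $\mathcal{E}_\lambda^*$.

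I do not anticipate any real obstacle: the lemma is essentially a bookkeeping combination of the two preceding off-diagonal estimates. The only mildly awkward step is that Lemma~\ref{l0} provides decay only against a single linear functional $x \mapsto x \cdot \chi$, which forces the $2n$-fold cone-decomposition of the spatial annulus; beyond that everything is routine.
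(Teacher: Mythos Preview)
Your proposal is correct and follows essentially the same strategy as the paper: split the parabolic annulus into pieces where either the spatial separation or the temporal separation is large, apply Lemma~\ref{l0} with $\chi=\mp\theta e_j$ on the spatial pieces, and apply Lemma~\ref{lenewpiece} on the temporal pieces. The only cosmetic difference is organizational: the paper partitions $2^{k+1}\Delta\setminus 2^k\Delta$ directly into $2n+2$ ``cones'' $A_i,B_i,\tilde A,\tilde B$ according to which coordinate realizes $\max(2\|x\|_\infty,\sqrt{2|t|})$, whereas you first separate into $E_{\mathrm{sp}}\cup E_{\mathrm{tm}}$ and then break $E_{\mathrm{sp}}$ into $2n$ half-spaces $A_{j,\sigma}$; the resulting estimates are identical.
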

\begin{proof} Our proof of this lemma is inspired by the proof of Lemma 2 in \cite{EH}. Let $\|x\|_{\infty} := \max \{|x_i|\}_{i=1}^n$ for $x=(x_1,\cdot \cdot \cdot,x_n) \in \mathbb{R}^n.$ As the class of operators considered is invariant under Euclidean translations we can in the following assume, without loss of generality, that $\Delta = Q \times I$ is centered at the origin and hence $$2^k \Delta = 2^k Q \times 4^k I = \Big\{(x,t) \in \mathbb{R}^{n+1}: 2\|x\|_{\infty}<  2^{k} \ell(\Delta)\mbox{ and }\sqrt{2|t|}< 2^{k} \ell(\Delta) \Big \},$$ for $k \in \mathbb{N}$. Let $A_i, B_i$, for $i = 1,...,n$, and  $\tilde A,\tilde B$, be defined according to
$$
\begin{aligned}
A_i :=&\Big \{(x,t) \in \mathbb{R}^{n+1}: \max(2\|x\|_{\infty},\sqrt{2|t|}) = 2x_i \Big\}, \\
B_i :=& \Big\{(x,t) \in \mathbb{R}^{n+1}: \max(2\|x\|_{\infty},\sqrt{2|t|}) = -2x_i \Big \},\\
\tilde A :=& \Big \{(x,t) \in \mathbb{R}^{n+1}: \max(2\|x\|_{\infty},\sqrt{2|t|}) = \sqrt{2t} \Big \}, \\
\tilde B :=& \Big \{(x,t) \in \mathbb{R}^{n+1}: \max(2\|x\|_{\infty},\sqrt{2|t|}) = \sqrt{-2t} \Big \}.
\end{aligned}
$$
Then,
$$
\begin{aligned}
\mathcal{E}_\lambda \big(f\, 1_{2^{k+1}\Delta \setminus 2^k \Delta}\big) &= \sum_{i=1}^n  \mathcal{E}_\lambda \big(f\, 1_{A_i \cap (2^{k+1}\Delta \setminus 2^k \Delta)}\big)\, +\,  \mathcal{E}_\lambda \big(f\, 1_{B_i \cap (2^{k+1}\Delta \setminus 2^k \Delta)}\big)\\&
+  \mathcal{E}_\lambda \big(f\, 1_{\tilde A \cap (2^{k+1}\Delta \setminus 2^k \Delta)}\big)\, +\,  \mathcal{E}_\lambda \big(f\, 1_{\tilde B \cap (2^{k+1}\Delta \setminus 2^k \Delta)}\big).
\end{aligned}
$$
First, we now apply Lemma \ref{l0} to the function $f \, 1_{A_i \cap (2^{k+1}\Delta \setminus 2^k \Delta)}$ with $\chi = -\theta \,e_i$ where $e_i$ is the unit vector in the $i$-th coordinate direction. In this case
$$e^{-\big( \frac{\theta \ell(\Delta)}{\lambda}\big)} \leq   e^{\big(\frac{x \cdot \chi }{\lambda} \big)},$$
for $(x,t) \in \Delta$, and
$$ e^{\big(\frac{x \cdot \chi }{\lambda}\big)} \leq e^{-\big( \frac{\theta\, 2^k \ell(\Delta)}{2\lambda}\big)},$$
for $(x,t) \in A_i \cap (2^{k+1}\Delta \setminus 2^k \Delta)$. Hence, by Lemma \ref{l0}, we obtain
$$
\| \mathcal{E}_\lambda \big(f\, 1_{A_{i} \cap (2^{k+1}\Delta \setminus 2^k \Delta)}\big)\|_{\L^2(\Delta)} \lesssim e^{-\big(\frac{2^k \ell(\Delta)}{c\lambda }\big)} \|f \, 1_{A_{i}}\|_{\L^2(2^{k+1} \Delta \setminus 2^k \Delta)}.
$$
Second, we apply Lemma \ref{lenewpiece} to $f\, 1_{\tilde A \cap (2^{k+1}\Delta \setminus 2^k \Delta)}$ and the sets $E =  \overline{\tilde A \cap (2^{k+1} \Delta \setminus 2^k \Delta)}$, $F = \overline{\tilde A \cap \Delta}$. Hence,
$$
\| \mathcal{E}_\lambda \big(f\, 1_{\tilde A \cap (2^{k+1}\Delta \setminus 2^k \Delta)}\big)\|_{\L^2(\Delta)} \lesssim e^{-\big(\frac{2^k \ell(\Delta)}{c\lambda}\big)} \|f \, 1_{\tilde A}\|_{\L^2(2^{k+1} \Delta \setminus 2^k \Delta)}.
$$
The terms involving $f\, 1_{B_i \cap (2^{k+1}\Delta \setminus 2^k \Delta)}$ and $f\, 1_{\tilde B \cap (2^{k+1}\Delta \setminus 2^k \Delta)}$ can be treated similarly. (ii) and (iii) of the lemma, as well as the estimates for $\mathcal{E}^*_{\lambda}$, can be handled using the same ideas.
\end{proof}
\begin{cor}
There exists a constant $c$, $1\leq c<\infty$, depending only on the structural constants, such that
$$
\begin{aligned}
\mathrm{(i)}& \quad \|\mathcal{E}_\lambda (f \, 1_{ \Delta})\|_{\L^2(2^{k+1} \Delta \setminus 2^k \Delta)} \lesssim  e^{-\big(\frac{2^k \ell(\Delta)}{c\lambda}\big)}  \|f\|_{\L^2( \Delta)}, \\
\mathrm{(ii)}& \quad  \|\lambda \nabla_x  \mathcal{E}_\lambda (f \, 1_{\Delta})\|_{\L^2(2^{k+1} \Delta \setminus 2^k \Delta)} \lesssim  e^{-\big(\frac{2^k \ell(\Delta)}{c\lambda}\big)}  \|f\|_{\L^2( \Delta)}, \\
\mathrm{(iii)}& \quad \|\lambda \mathcal{E}_\lambda   \div_x (\textbf{f}\, \, 1_{ \Delta})\|_{\L^2(2^{k+1} \Delta \setminus 2^k \Delta)}  \lesssim  e^{-\big(\frac{2^k \ell(\Delta)}{c\lambda}\big)}  \|\textbf{f}\,\|_{\L^2( \Delta)},
\end{aligned}
$$
for all parabolic cubes $\Delta \subset \mathbb{R}^{n+1}$ and $f \in \L^2(\R^{n+1}),\, \textbf{f} \in \L^2(\R^{n+1};\mathbb{C}^{n}).$ The same statements are true with $\mathcal{E}_\lambda$ replaced by $\mathcal{E}_\lambda^\ast$.
\label{corof}
\end{cor}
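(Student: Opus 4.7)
The plan is to obtain Corollary \ref{corof} directly from Lemma \ref{le8-+} by a duality argument, using the fact that the $\L^2$-adjoint of $\mathcal{E}_\lambda=(I+\lambda^2\cH)^{-1}$ is $\mathcal{E}^*_\lambda=(I+\lambda^2\cH^*)^{-1}$, and that Lemma \ref{le8-+} applies to both operators. The key observation is that Corollary \ref{corof} is simply the ``symmetric'' version of Lemma \ref{le8-+}: in the lemma one integrates the output over $\Delta$ while the input is supported in the annulus $2^{k+1}\Delta\setminus 2^k\Delta$, whereas in the corollary these roles are swapped.

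For $\mathrm{(i)}$, I would test against an arbitrary $g\in \L^2(2^{k+1}\Delta\setminus 2^k\Delta)$ and write
\begin{align*}
\langle \mathcal{E}_\lambda(f 1_\Delta),\, g 1_{2^{k+1}\Delta\setminus 2^k\Delta}\rangle
=\langle f 1_\Delta,\, \mathcal{E}^*_\lambda(g 1_{2^{k+1}\Delta\setminus 2^k\Delta})\rangle.
\end{align*}
Cauchy--Schwarz on $\Delta$ combined with Lemma \ref{le8-+}$\mathrm{(i)}$ applied to $\mathcal{E}^*_\lambda$ bounds the right-hand side by $e^{-2^k\ell(\Delta)/(c\lambda)}\|f\|_{\L^2(\Delta)}\|g\|_{\L^2(2^{k+1}\Delta\setminus 2^k\Delta)}$, and taking the supremum over $g$ yields $\mathrm{(i)}$.

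For $\mathrm{(iii)}$, I would dualize in the same way: for $g\in\L^2(2^{k+1}\Delta\setminus 2^k\Delta)$,
\begin{align*}
\langle \lambda\mathcal{E}_\lambda\div_x(\mathbf{f}\,1_\Delta),\, g 1_{2^{k+1}\Delta\setminus 2^k\Delta}\rangle
=-\langle \mathbf{f}\,1_\Delta,\, \lambda\nabla_x\mathcal{E}^*_\lambda(g 1_{2^{k+1}\Delta\setminus 2^k\Delta})\rangle,
\end{align*}
and Cauchy--Schwarz together with Lemma \ref{le8-+}$\mathrm{(ii)}$ applied to $\mathcal{E}^*_\lambda$ finishes the estimate. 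For $\mathrm{(ii)}$ I would test against $\mathbf{g}\in \L^2(2^{k+1}\Delta\setminus 2^k\Delta;\mathbb{C}^n)$, integrate by parts in $x$, and use Lemma \ref{le8-+}$\mathrm{(iii)}$ for $\mathcal{E}^*_\lambda$ in the same manner:
\begin{align*}
\langle \lambda\nabla_x\mathcal{E}_\lambda(f 1_\Delta),\, \mathbf{g}\,1_{2^{k+1}\Delta\setminus 2^k\Delta}\rangle
=-\langle f 1_\Delta,\, \lambda\mathcal{E}^*_\lambda\div_x(\mathbf{g}\,1_{2^{k+1}\Delta\setminus 2^k\Delta})\rangle.
\end{align*}
The statements for $\mathcal{E}^*_\lambda$ follow by the same argument interchanging the roles of $\cH$ and $\cH^*$.

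There is no real obstacle beyond making sure the integration by parts and the duality $\langle \mathcal{E}_\lambda u,v\rangle=\langle u,\mathcal{E}^*_\lambda v\rangle$ are rigorously justified. The former holds because the truncated vector fields $\mathbf{g}\,1_{2^{k+1}\Delta\setminus 2^k\Delta}$ lie in $\L^2(\ree;\mathbb{C}^n)$ so that $\lambda\mathcal{E}^*_\lambda\div_x(\cdot)$ is well-defined and bounded on $\L^2$ by Lemma \ref{le8-}$\mathrm{(iii)}$; the latter is immediate from the construction of $\cH$ via the sesquilinear form in \eqref{hidden coercivity} and the corresponding form defining $\cH^*$. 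Thus the corollary really is a direct dualization of Lemma \ref{le8-+}, and the exponential factor is inherited without loss.
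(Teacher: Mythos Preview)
Your proposal is correct and matches the paper's own proof essentially line for line: the paper also proves the corollary by dualizing Lemma \ref{le8-+}, writing out the case $\mathrm{(i)}$ explicitly via $\langle \mathcal{E}_\lambda(f 1_\Delta),g\rangle=\langle f 1_\Delta,\mathcal{E}^*_\lambda g\rangle$ and Cauchy--Schwarz, and declaring the remaining cases analogous. Your treatment of $\mathrm{(ii)}$ and $\mathrm{(iii)}$ via the pairings with Lemma \ref{le8-+}$\mathrm{(iii)}$ and $\mathrm{(ii)}$ respectively is exactly the intended argument.
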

 \begin{proof}
 The estimates follows by a duality argument and Lemma \ref{le8-+}. To prove (i),
 \begin{align*}
   \|\mathcal{E}_\lambda (f \, 1_{ \Delta})\|_{\L^2(2^{k+1} \Delta \setminus 2^k \Delta)} & =  \sup_{g } \iint_{\ree} \mathcal{E}_\lambda (f \, 1_{ \Delta}) \, \cl{g} \, \d x \d t  \\ &= \sup_{g}  \iint_{\ree}  f \, 1_{ \Delta} \, \cl{\mathcal{E}^*_\lambda g} \, \d x \d t  \\ & \lesssim \sup_ {g} \| f \|_{\L^2(\Delta)}  \|\mathcal{E}^*_\lambda g\|_{\L^2(\Delta)} \lesssim e^{-\big(\frac{2^k \ell(\Delta)}{c\lambda}\big)}   \| f\|_{\L^2(\Delta)} ,
 \end{align*}
 where supremum is taken over all $g \in C^{\infty}_0(2^{k+1} \Delta \setminus 2^k \Delta;\IC)$, satisfying $\|g\|_2 = 1$, and Lemma \ref{le8-+} is used on the last inequality.  The proofs of the other estimates are analogous. We omit further details.\end{proof}

\section{Additional estimates related to the principal part approximation}\label{additional}  We here deduce a number of estimates  for the operator
\begin{align}\label{opa0}\mathcal{U}_\lambda :=\lambda\mathcal{E}_\lambda \div_x,
 \end{align}
and for the operator
\begin{align}\label{opa1}
\mathcal{R}_\lambda &:=\mathcal{U}_\lambda A -(\mathcal{U}_\lambda A)\mathcal{A}_\lambda.
 \end{align}
Recall that the dyadic averaging operator $\mathcal{A}_\lambda $ was introduced in $\eqref{dy}$ and note that we must address the very definitions of $(\mathcal{U}_\lambda A)$ and
$\mathcal{R}_\lambda$, where the first one acts as the multiplication with $\mathcal{U}_\lambda A$. In Lemma \ref{lemedda} we will justify that $\mathcal{U}_\lambda A \in \L^{2}_{\loc}(\R^{n+1})$. The estimates we will derive in this section will be used in the proof of
 Theorem \ref{thm:Kato}, see Section \ref{sec2}.

 Using Lemma $\ref{le8-}$ and Lemma $\ref{le8-+}$ we have
\begin{equation}
\begin{aligned}
\sup_{\lambda>0} \|\, \mathcal{U}_\lambda \textbf{f}\,\|_{2}\lesssim \|\textbf{f}\,\|_{2},
\end{aligned}
\label{stand-}
\end{equation}
 for all  ${\bf f} \in \L^2(\ree;\IC^n)$ and
\begin{equation}
\begin{aligned}
\|\mathcal{U}_\lambda(\,\textbf{f}\,\, 1_{2^{k+1} \Delta \setminus 2^k \Delta})\|_{\L^2(\Delta)}^2 \lesssim e^{-\big(\frac{2^k \ell(\Delta) }{c \lambda}\big)} \|\,\textbf{f}\,\|^2_{\L^2(2^{k+1} \Delta \setminus 2^k \Delta)},
\end{aligned}
\label{stand}
\end{equation}
 for all  ${\bf f} \in \L^2_{\loc}(\ree;\IC^n)$,  for all integers $k\geq 1$, and for a constant $c$ depending only on structural constants. In \eqref{stand} $\Delta$ is, as usual,  a parabolic cube.

\begin{rem}
\label{keyrem} Let $\Delta_0=Q_0\times I_0$ where $Q_0 \subset \R^n$ is a cube as in Subsection \ref{keysec}. Let  
 \begin{align}
   \label{apa}
   \mbox{$\textbf{f}\, = \barint_{Q_0} A_i$ for some $i$, $1 \leq i \leq n$, {where  $A = (A_1, \cdot \cdot \cdot,A_n)$},}
    \end{align}
and let $\chi_k(x,t) = \chi\big( 2^{-k}{x}, 2^{-2k}{t}\big)$ for $x \in \R^n,t \in \R$, where $\chi \in C^{\infty}_0(2\Delta_0)$ is a cut-off function satisfying $\chi = 1$ on $\Delta_0$.  Then, we define
 \begin{align}
     \mathcal{U}_\lambda(\, \textbf{f}\,) := \lim_{k \to \infty}\mathcal{U}_\lambda( \chi_k \, \textbf{f}\,),
 \label{newdef}
 \end{align}
 which exists as an element in $\L^2_{\loc}(\ree)$.  Indeed, using  the fact that $$\nabla_x \chi_{k+l} =0, \quad \textup{on } \ree \setminus (2^{k+l+1} \Delta_0 \setminus 2^{k+l} \Delta_0),$$ for every $l \geq 1$ and Lemma \ref{le8-+}, we obtain
 \begin{align*}
    \|\mathcal{U}_\lambda (\, \chi_{k+l} \,  \textbf{f}\,) \|_{\L^2(2^l \Delta_0)}  & =\bigl \|\mathcal{U}_\lambda \bigl( \chi_{k+l}  \,\barint_{Q_0} A_i\,\bigr)\bigr \|_{\L^2(2^l \Delta_0)}\\
     &=\bigl \| \lambda\mathcal{E}_{\lambda} \bigl( \nabla_x \chi_{k+l} \cdot \barint_{Q_0} \, A_i   \bigr) \bigr \|_{\L^2(2^l\Delta_0)} \\ & \lesssim \lambda e^{-\big(\frac{2^{k+l} \ell(\Delta_0 ) }{c \lambda}\big)}  \bigl \|   \nabla_x \chi_{k+l} \cdot  \barint_{Q_0} \, A_i  \bigr \|_{\L^2(2^{k+l+1} \Delta_0 \setminus 2^{k+l} \Delta_0)}\\
       &\lesssim \lambda |\Delta_0|^{1/2}e^{-\big(\frac{2^{k+l} \ell(\Delta_0 ) }{c \lambda}\big)}  2^{\big(\frac{n(k+l)}{2}\big)},
 \end{align*}
for every $l,k \geq 1$.  Letting $k \to \infty$ in the above inequality, we see that
    \begin{align}
   \label{apa+}\mbox{$\mathcal{U}_\lambda(\, \barint_{Q_0} A_i\,) = 0$ in the sense of $\L^2_{\loc}(\ree;\mathbb{C}^n)$.}
    \end{align}
 The fact that the definition of $\mathcal{U}_\lambda(\, \textbf{f}\,)$ is independent of the particular choice of $\chi$ is a consequence of the off-diagonal estimate for $\mathcal{U}_\lambda$ stated in Lemma \ref{le8-+} (iii). Note that if $\textbf{f} \in \L^2(\ree;\mathbb{C}^n)$, then the definition of $\mathcal{U}_\lambda(\, \textbf{f}\,)$ in \eqref{newdef} coincides with our previous definition of $ \mathcal{U}_\lambda(\, \textbf{f}\,)$ {by \eqref{stand-}}.
 \end{rem}

\begin{lem}\label{lemedda} Consider  $A = (A_1, \cdot \cdot \cdot,A_n)$.  Then
$$
\sup_{\lambda>0} \biggl( \sup_{\Delta} \frac{1}{|\Delta|} \|\mathcal{U}_{\lambda}\, A_i \,\|^2_{\L^2(\Delta)}\biggr) \lesssim 1,
  $$ where the second supremum is taken, for fixed $\lambda$,  over all the parabolic cubes $\Delta \subset \mathbb{R}^{n+1}$ of size $\lambda.$ In particular,
  $\mathcal{U}_{\lambda} A_i \in \L^2_{\loc}(\ree)$ for all $1 \leq i \leq n$.
\end{lem}
\begin{proof}  By Remark \ref{keyrem}, we have $ \mathcal{U}_{\lambda}(\barint_{Q} A_i) = 0$ for every cube $Q \subset \R^n.$ Furthermore, for fixed $i$,  $1 \leq i \leq n$, and a parabolic cube $\Delta = Q \times I$, by \eqref{bmoo1} we have
\begin{align*}
    \frac{1}{|\Delta|}  \bigl\|\, A_i\,-\barint_{Q} \, A_i \,  \bigr\|^2_{\L^2(\Delta)} =  \barint_{I}  \barint_{Q} \bigl|\, A_i\,-\barint_{Q}\, A_i \,  \bigr|^2 {\, \d x \d t} \lesssim 1.
\end{align*}
     To start the main argument, fix a parabolic cube $\Delta$ of size $\lambda$. As the class of operators and coefficients considered is invariant under Euclidean translations we can in the following assume, without loss of generality,  that $\Delta=Q \times (-\lambda^2/2,\lambda^2/2)$. Then, $2^k \Delta = 2^k Q \times (-(2^k \lambda)^2/2,(2^k \lambda)^2/2)$ for $k \geq 0$. Using that  $\mathcal{U}_{\lambda}( \barint_{Q} \, A_i \, )=0$ we have
$$\begin{aligned}
 {\|\mathcal{U}_{\lambda} \, A_i \,\|}_{\L^2(\Delta)} &=  \bigl\|\mathcal{U}_{\lambda} \bigl( \, A_i \,- \barint_Q \, A_i \, \bigr)\bigr\|_{\L^2(\Delta)}\\ &= \bigl\|\mathcal{U}_{\lambda} \bigl((\, A_i \,-\barint_Q \, A_i \, )1_{\Delta} +  \sum_{k=0}^{\infty} (\, A_i \,- \barint_Q \, A_i \, ) 1_{2^{k+1}\Delta \setminus 2^k \Delta} \bigr)\bigr\|_{\L^2(\Delta)}\\
&\leq  \bigl\|\mathcal{U}_{\lambda} \bigl( (\, A_i \,-\barint_Q \, A_i \,)1_{\Delta} \bigr)\bigr\|_{\L^2(\Delta)} + \sum_{k=0}^{\infty} \bigl\|\mathcal{U}_{\lambda} \bigl(  (\, A_i \,- \barint_Q\, A_i \,) 1_{2^{k+1}\Delta \setminus 2^k \Delta} \bigr)\bigr\|_{\L^2(\Delta)},\end{aligned}
$$
where the Minkowski inequality is used in the last inequality. Hence, by (\ref{stand-}) and (\ref{stand}) we obtain
$$\begin{aligned}
\frac{1}{|\Delta|^{\frac{1}{2}}} \|\mathcal{U}_{\lambda} \, A_i \,\|_{\L^2(\Delta)} &\lesssim \frac{1}{|\Delta|^{\frac{1}{2}}} \bigl\| \, A_i \,-\barint_Q \, A_i \, \bigr\|_{\L^2(\Delta)} + \sum_{k=0}^{\infty}   e^{-\big(\frac{2^k}{c}\big)}    \frac{1}{|\Delta|^{\frac{1}{2}}} \bigl\| \, A_i \,-\barint_Q \, A_i \, \bigr\|_{\L^2(2^{k+1} \Delta \setminus 2^{k} \Delta)}.
\end{aligned}
$$
Introducing a telescoping sum we have
\begin{align}\label{edda1}
\bigl\| \, A_i \,-\barint_Q \, A_i \, \bigr\|_{\L^2(2^{k+1} \Delta \setminus 2^{k} \Delta)}& \leq \bigl\| \bigl(\, A_i \,-\barint_{2^{k+1} Q} \, A_i \,  \bigr)+ \sum_{l=1}^{k+1} \bigl(\barint_{2^{l} Q} \, A_i \, - \barint_{2^{l-1} Q} \, A_i \,  \bigr)\bigr\|_{\L^2(2^{k+1} \Delta)}\notag\\& \leq \bigl\| \, A_i \,-\barint_{2^{k+1} Q} \, A_i \, \bigl | \bigl|_{\L^2(2^{k+1} \Delta)}+ \sum_{l=1}^{k+1}  \bigl| \bigl|  \barint_{2^{l} Q} \, A_i \,-\barint_{2^{l-1} Q} \, A_i \, \bigr\|_{\L^2(2^{k+1} \Delta)},
\end{align}
where again the Minkowski inequality is used in the last inequality. Also,
$$\begin{aligned}
\bigl| \bigl|  \barint_{2^{l} Q}\, A_i \, - \barint_{2^{l-1} Q} \, A_i \, \|_{\L^2(2^{k+1} Q)}  & \leq |2^{k+1} Q|^{\frac{1}{2}} \bigl| \barint_{2^{l{-1}} Q} \, \bigl(A_i \,-\barint_{2^{{l}} Q} \, A_i\bigr )\d x \, \bigr| \\ & \lesssim  |2^{k+1} Q|^{\frac{1}{2}}  \barint_{2^{l} Q} \bigl|\, A_i \,-\barint_{2^l Q} \, A_i \,  \bigr|\d x \\ & \lesssim |2^{k} Q|^{\frac{1}{2}}  \bigl(\barint_{2^l Q}\bigl| \, A_i \,- \barint_{2^l Q} \, A_i \, \bigr|^2\d x\bigr)^{\frac{1}{2}},
\end{aligned}
$$
for $1\leq l\leq k+1$, where the Cauchy-Schwarz inequality is used in the third inequality. Using the deduction in the last display and \eqref{bmoo1} we conclude that
\begin{align}\label{edda2}
\bigl| \bigl|  \barint_{2^{l} Q} \, A_i \,-\barint_{2^{l-1} Q} \, A_i \, \bigr\|_{\L^2(2^{k+1} \Delta)}&\lesssim\bigl (|2^{k} Q|2^{2k}\lambda^2 \bigr )^{1/2}\lesssim 2^{k/2} |\Delta|^{1/2}.
\end{align}
Similarly we see that
\begin{align}\label{edda3}
\bigl\| \, A_i \,-\barint_{2^{k+1} Q} \, A_i \, \bigl | \bigl|_{\L^2(2^{k+1} \Delta)}&\lesssim 2^{k/2} |\Delta|^{1/2}.
\end{align}
Combining the estimates in \eqref{edda1}-\eqref{edda3} we conclude that
\begin{align}\label{edda4}
\bigl\| \, A_i \,-\barint_Q \, A_i \, \bigr\|_{\L^2(2^{k+1} \Delta \setminus 2^{k} \Delta)}& \lesssim k2^{k/2} |\Delta|^{1/2}.
\end{align}
 Collecting the estimates derived we deduce that
 $$\begin{aligned}
\frac{1}{|\Delta|^{\frac{1}{2}}} \|\mathcal{U}_{\lambda} \, A_i \,\|_{\L^2(\Delta)} &\lesssim \frac{1}{|\Delta|^{\frac{1}{2}}} \bigl\| \, A_i \,-\barint_Q \, A_i \, \bigr\|_{\L^2(\Delta)} + \sum_{k=0}^{\infty}   e^{-\big(\frac{2^k}{c}\big)}    \frac{1}{|\Delta|^{\frac{1}{2}}} \bigl\| \, A_i \,-\barint_Q \, A_i \, \bigr\|_{\L^2(2^{k+1} \Delta \setminus 2^{k} \Delta)}\\
&\lesssim \frac{1}{|\Delta|^{\frac{1}{2}}} \bigl\| \, A_i \,-\barint_Q \, A_i \, \bigr\|_{\L^2(\Delta)}+ \sum_{k=0}^{\infty}  k2^{k/2} e^{-\big(\frac{2^k}{c}\big)}\lesssim 1
\end{aligned}
$$
by yet an other application of \eqref{bmoo1}. In particular, we can conclude $\mathcal{U}_{\lambda} A_i $ is well-defined  and belongs to $\L^2_{\loc}(\ree)$. \end{proof}

\begin{lem} Consider  $A = (A_1, \cdot \cdot \cdot,A_n)$. Then,  $\mathcal{U}_{\lambda} A_i \in \L^2_{\loc}(\ree;\IC^n)$ for all $1 \leq i \leq n$ by Lemma \ref{lemedda}, and
\begin{align}\label{eka2}
 \sup_{f} \biggl( \sup_{\lambda>0} \|(\mathcal{U}_{\lambda}\, A_i \,) \mathcal{A}_\lambda  f\|_{2} \biggr) \lesssim 1,
\end{align}
where the first supremum is taken over all $f \in \L^2(\ree)$ with $\|f\|_2 = 1$.
\label{le11+}
\end{lem}

\begin{proof} Fix $\lambda>0$ and recall the dyadic averaging operator $\mathcal{A}_\lambda $ introduced in $\eqref{dy}$. For $\lambda$ fixed, let $\{\Delta\}$  dyadic decomposition of $\ree$ by parabolic cubes such that $\ell(\Delta) = \lambda$ for very $\Delta$ in the collection. Using that $\mathcal{A}_\lambda  f$ is constant on  $\Delta$ in the collection we have
\begin{align*}
    \iint_{\Delta} |(\mathcal{U}_{\lambda} \, A_i\,) \mathcal{A}_\lambda  {f}|^2 \, \d x \d t = \iint_{\Delta} |\mathcal{U}_{\lambda} \, A_i\,|^2\, \d x \d t \bariint_{\Delta} |\mathcal{A}_\lambda  {f}|^2\, \d x \d t \lesssim  \iint_{\Delta}   |\mathcal{A}_\lambda  {f}|^2\, \d x \d t.
\end{align*}
where we in the last inequality have used Lemma \ref{lemedda}.
 In conclusion,
\begin{align*}
    \|(\mathcal{U}_{\lambda} \, A_i\,) \mathcal{A}_\lambda  {f}\|^2_{2} \lesssim \sum_{\Delta}  \iint_{\Delta}  |\mathcal{A}_\lambda  {f}|^2\, \d x \d t\lesssim \|\mathcal{A}_\lambda  {f}\|^2_{2} \lesssim\| {f}\|^2_{2},
\end{align*}
where we have used $\mathcal{A}_\lambda  {f} \lesssim \mathcal{M}^{(1)}(\mathcal{M}^{(2)}(f))$ in the last inequality. \end{proof}

Next, consider the operator \begin{align*}
\mathcal{R}_\lambda &:=\mathcal{U}_\lambda A -(\mathcal{U}_\lambda A)\mathcal{A}_\lambda
 \end{align*}
 introduced in \eqref{opa1}.  In  the proof of
 Theorem \ref{thm:Kato}, see Section \ref{sec2}, we will need that $\mathcal{R}_\lambda (\nabla_x f)$ is well-defined, and that
 \begin{align}
 \label{newes}
     \|\mathcal{R}_{\lambda} (\nabla_x f)\|_{2} \lesssim \|\lambda \nabla_x (\nabla_x f)\|_{2} + \|\lambda^2 \partial_t (\nabla_x f)\|_{2},
 \end{align}
 for every $ f \in C_0^{\infty}(\ree)$. Note that as $A$ is not bounded, we  cannot deduce \eqref{newes} directly from previous works, like for instance \cite[Lem. 2.27]{N1}. What save us are the reverse H{\"o}lder inequalities stated in \eqref{eq:inverseholdergradient} in Remark \ref{conseq}, and that the fact that in
 \eqref{newes} the operator $\mathcal{R}_\lambda $ is applied to $\nabla_x f$.

 Furthermore, recall that given a cube $Q_0\subset\mathbb R^n$ we have
\begin{align}\label{formha}
    \div_x A \nabla_x = \div_x \bigl(A-\barint_{Q_0} D\bigr) \nabla_x,
\end{align}
in the weak sense, see Subsection 2.8. Consequently, in our arguments we can always, without loss of generality and whenever we prefer,  replace $D$ by $D - \barint_{Q_0} D$, and hence work under the assumption that
 \begin{align}\label{normal+a}
 \bigl(\barint_{Q_0} D\bigr )(t)= 0\mbox{ for  ${t \in \R}$ and }D \in \L^p_{\loc}(\ree)\mbox{ for all }p \geq 1.
 \end{align}
 We have used this reduction at instances. However, in the following we will, to help the reader to exactly see how the conditions on the coefficients enter into the argument, at instances work with the representation of $\div_x A \nabla_x$ in \eqref{formha}. Accepting this we write
\begin{align}\label{form1-}
    \mathcal{U}_\lambda A \nabla_x f =  \mathcal{U}_\lambda \bigl(A-\barint_{Q_0} D\bigr) \nabla_x f,
\end{align}
and
\begin{align}\label{form1}
    &\mathcal{R}_\lambda(\nabla_x f) =\mathcal{U}_\lambda A (\nabla_x f) -(\mathcal{U}_\lambda A)\mathcal{A}_\lambda(\nabla_x f)=\mathcal{U}_\lambda \bigl(A-\barint_{Q_0} D\bigr) (\nabla_x f) -(\mathcal{U}_\lambda A)\mathcal{A}_\lambda(\nabla_x f).
\end{align}
We prove the following lemma.

 \begin{lem}\label{newes+}
      Let $\mathcal{R}_\lambda$ be defined as \eqref{opa1}. Then, \eqref{newes} holds for all $ f \in C_0^{\infty}(\ree)$.
  \end{lem}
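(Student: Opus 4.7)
The idea is to split, using $\mathcal{A}_\lambda^2 = \mathcal{A}_\lambda$,
$$\mathcal{R}_\lambda(\nabla_x f) = \mathcal{U}_\lambda(A\tilde g) + \big[\mathcal{U}_\lambda(A\,\mathcal{A}_\lambda \nabla_x f) - (\mathcal{U}_\lambda A)\,\mathcal{A}_\lambda \nabla_x f\big],$$
where $\tilde g := \nabla_x f - \mathcal{A}_\lambda \nabla_x f$ satisfies $\mathcal{A}_\lambda \tilde g = 0$. A parabolic Poincar\'e inequality applied on every cube $\Delta \in \Box_\lambda$ and summed yields $\|\tilde g\|_2 \lesssim \|\lambda\nabla_x\nabla_x f\|_2 + \|\lambda^2 \partial_t \nabla_x f\|_2$, which is exactly the right-hand side of \eqref{newes}, so the plan is to bound each of the two bracketed pieces by a structural constant times $\|\tilde g\|_2$.

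For $\mathcal{U}_\lambda(A\tilde g)$: by \eqref{normal}, $A\in \L^p_{\loc}$ for every $p\geq 1$, and since $\tilde g$ is locally bounded (as $\nabla_x f$ is smooth), $A\tilde g \in \L^2_{\loc}$, so $u := \mathcal{E}_\lambda \div_x(A\tilde g)$ is defined via Remark~\ref{keyrem} with $\mathcal{U}_\lambda(A\tilde g) = \lambda u$. Testing the weak equation $(I+\lambda^2 \cH)u = \div_x(A\tilde g)$ against $v=u$ and taking real parts, the anti-symmetry of $D$ and the skew-adjointness of $\HT$ eliminate those contributions from the quadratic form and leave the hidden coercivity bound $\|u\|_2^2 + c_1\lambda^2 \|\nabla_x u\|_2^2 \leq |\langle A\tilde g, \nabla_x u\rangle|$. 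The symmetric contribution is directly controlled by $c_2\|\tilde g\|_2\|\nabla_x u\|_2$. For the anti-symmetric one, observe that on each $\Delta' = Q'\times I' \in \Box_\lambda$ one has $\tilde g|_{\Delta'} = \nabla_x \phi_{\Delta'}$ for a zero-mean function $\phi_{\Delta'}$, namely $\phi_{\Delta'} = f - (\barint_{\Delta'}\nabla_x f)\cdot x - b_{\Delta'}$. The anti-symmetry of $D$ rewrites the integrand as a sum over $i,j$ of $D_{ij}$ paired with the Jacobian quantities $\partial_{x_i}\phi_{\Delta'}\overline{\partial_{x_j} u} - \partial_{x_j}\phi_{\Delta'}\overline{\partial_{x_i} u}$; combining the $\BMO$--$\mathcal{H}^1$ duality \eqref{eqbmo} with the compensated-compactness estimate of Lemma~\ref{p3.1} applied at each fixed time (after a bounded Sobolev extension of $\phi_{\Delta'}|_{Q'}$ to $\R^n$) yields
$$\Big|\iint_{\Delta'} D\tilde g \cdot \overline{\nabla_x u}\Big| \lesssim \|\tilde g\|_{\L^2(\Delta')}\|\nabla_x u\|_{\L^2(\Delta')}.$$
Summing with Cauchy--Schwarz in $\Delta'$ and absorbing $c_1\lambda^2\|\nabla_x u\|_2^2$ on the left gives $\|\mathcal{U}_\lambda(A\tilde g)\|_2 = \|\lambda u\|_2 \lesssim \|\tilde g\|_2$.

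For the commutator-type piece: expanding $\mathcal{A}_\lambda\nabla_x f = \sum_{\Delta''\in\Box_\lambda} c_{\Delta''}\,1_{\Delta''}$ with $c_{\Delta''}:=\barint_{\Delta''}\nabla_x f$, linearity and the $\L^2_{\loc}$ identity $\mathcal{U}_\lambda A = \sum_{\Delta''}\mathcal{U}_\lambda(A\,1_{\Delta''})$ show that, at $(x,t)\in\Delta'$, the bracketed expression equals $\sum_{\Delta''}(c_{\Delta''} - c_{\Delta'})\cdot \mathcal{U}_\lambda(A\,1_{\Delta''})(x,t)$. The off-diagonal decay from Lemma~\ref{le8-+}, together with the polynomial-in-$k$ control of $\|A\|_{\L^2(2^k\Delta_0)}$ coming from \eqref{magicalineq}, gives $\|\mathcal{U}_\lambda(A\,1_{\Delta''})\|_{\L^2(\Delta')}$ with exponential decay in $d(\Delta',\Delta'')/\lambda$, while a telescoping Poincar\'e bound controls $|c_{\Delta''} - c_{\Delta'}|$ by the $\L^2$-oscillation of $\nabla_x f$ over a hull of $\Delta'\cup\Delta''$. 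A Schur-type summation of the resulting geometric series then delivers $\L^2$-norm bounded by $\|\tilde g\|_2$. The principal obstacle is the BMO-compensated-compactness step for $\langle D\tilde g, \nabla_x u\rangle$: because $\tilde g$ is only a gradient locally on each dyadic cube and not globally, the standard Lemma~\ref{p3.1} cannot be applied in one shot, and the resolution is precisely the cube-by-cube reduction combined with the anti-symmetry of $D$ producing Jacobian structure.
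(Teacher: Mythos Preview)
Your route is genuinely different from the paper's, and the main step does not go through as written.

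The paper never splits off $\tilde g = \nabla_x f - \mathcal{A}_\lambda\nabla_x f$. Instead it argues by duality: for $g\in\L^2$ one has $\langle\mathcal{U}_\lambda A\nabla_x f, g\rangle = -\langle A\nabla_x f,\lambda\nabla_x\mathcal{E}^*_\lambda g\rangle$, and here both $\nabla_x f$ and $\nabla_x\mathcal{E}^*_\lambda g$ are \emph{global} gradients, so Lemma~\ref{p3.1} and \eqref{obs1+gg} apply on all of $\R^n$ at once. This yields the uniform bound $\|\mathcal{R}_\lambda(\nabla_x f)\|_2\lesssim\|\nabla_x f\|_2$ and, by the same duality combined with Corollary~\ref{corof}, an off-diagonal estimate for $\mathcal{R}_\lambda(\nabla_x\cdot)$. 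With these two ingredients and $\mathcal{R}_\lambda 1=0$, the paper runs the standard $T(1)$ reduction (subtract $x\cdot\barint_\Delta\nabla_x f$, Poincar\'e, telescoping over annuli) to obtain \eqref{newes}. The gradient structure is preserved throughout.

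Your decomposition destroys exactly this structure: $\tilde g$ is a gradient only on each dyadic cube $\Delta'=Q'\times I'$ separately, and the displayed cube-by-cube bound
\[
\Big|\iint_{\Delta'} D\tilde g\cdot\overline{\nabla_x u}\Big| \lesssim \|\tilde g\|_{\L^2(\Delta')}\|\nabla_x u\|_{\L^2(\Delta')}
\]
does not follow from Lemma~\ref{p3.1} via Sobolev extension. After extending $\phi_{\Delta'}|_{Q'}$ to $E\phi_{\Delta'}\in\H^1(\R^n)$, the div-curl lemma and $\BMO$--$\mathcal{H}^1$ duality control $\int_{\R^n} D\nabla E\phi_{\Delta'}\cdot\overline{\nabla_x u}$, whose right-hand side carries $\|\nabla_x u(\cdot,t)\|_{\L^2(\R^n)}$, not its restriction to $Q'$; and the discrepancy $\int_{\R^n\setminus Q'}$ involves the unbounded $D$ against a product of gradients with no compensation. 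Without the localization of $\nabla_x u$, summing over the infinitely many $Q'$ at fixed $I'$ fails. In fact the local inequality is false in general: on $Q'$ far from the normalizing cube $Q_0$, the constant matrix $\barint_{Q'}D$ has size of order $\log(\dist(Q',Q_0)/\ell(Q_0))$ by \eqref{magicalineq}, and testing with $\tilde g=(x_1-\barint_{Q'}x_1,0)$ and $u=x_1x_2$ shows the left side picks up this logarithm while the right side does not.

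There are two secondary issues. First, $A\tilde g$ is only in $\L^2_{\loc}$, so $u:=\mathcal{E}_\lambda\div_x(A\tilde g)$ is at best an $\L^2_{\loc}$ object via Remark~\ref{keyrem}; you would need $u\in\E(\ree)$ before testing the equation against $v=u$, and this is not established. Second, in the commutator piece your Schur bound requires control of $\|\mathcal{U}_\lambda(A\,1_{\Delta''})\|_{\L^2(\Delta')}$, hence of $\|A\|_{\L^2(\Delta'')}$, which is \emph{not} uniform in $\Delta''$ (again by \eqref{magicalineq}); the paper handles the analogous issue by subtracting $\barint_{Q}A$ as in Remark~\ref{keyrem} and Lemma~\ref{l11+-}, a step your expansion in $c_{\Delta''}-c_{\Delta'}$ does not accommodate.
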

\begin{proof}  Note that  $\mathcal{R}_\lambda 1$ is well-defined by Lemma \ref{le11+}, and that $\mathcal{R}_\lambda 1 = 0.$  To start the argument we first intend to prove that $\mathcal{R}_\lambda$ satisfies the estimates
\begin{align}\label{keyedd}
\mathrm{(i)}& \quad \|\mathcal{R}_{\lambda} (\nabla_x f)\|_{2} \lesssim \|\nabla_x f\|_2, \notag\\
  \mathrm{(ii)}& \quad\|\mathcal{R}_{\lambda}((\nabla_x f)\, 1_{2^{k+1} \Delta \setminus 2^k \Delta})\|_{\L^2(\Delta)}^2 \lesssim  e^{-\big(\frac{2^k \ell(\Delta)}{c\lambda}\big)} \|\nabla_x f\|^2_{\L^2(2^{k+1} \Delta \setminus 2^k \Delta)},
\end{align}
for every $f \in  C_0^{\infty}(\ree)$, with a constant $c>0$ depending only on the structural constants. Here, $\Delta=Q\times I$ is a parabolic cube of size $\lambda\sim \ell(\Delta)$, and $k$ is a positive integer satisfying $k\geq 11$. We prove  $\mathrm{(i)}$ and $\mathrm{(ii)}$ by a duality argument.  By Lemma \ref{lemedda} and Lemma \ref{le11+}, we have that $(\mathcal{U}_\lambda A)\mathcal{A}_\lambda \nabla_x f$ is well-defined and
\begin{align*}
    \|(\mathcal{U}_\lambda A) \mathcal{A}_\lambda \nabla_x f\|_2 \lesssim \|\nabla_x f\|_2.
\end{align*}
Also,
\begin{align*}
{\|\mathcal{U}_\lambda A (\nabla_x f)\|_2 =} \sup_g   \iint_{\ree} (\mathcal{U}_\lambda A (\nabla_x f)) \, \cl{g} \, \d x \d t & = - \sup_g \iint_{\ree} A (\nabla_x  f) \cdot \cl{ \lambda \nabla_x \mathcal{E}^*_{\lambda}  g} \, \d x \d t \\ & \lesssim \sup_g \|\nabla_x  f\|_2 \, \|\lambda \nabla_x \mathcal{E}^*_{\lambda}  g \|_2 \lesssim   \|\nabla_x  f\|_2,
\end{align*}
 where the supremum is taken with respect to all $g \in \L^2(\ree)$, satisfying $\|g\|_2 = 1$. In this deduction, Lemma \ref{p3.1} in conjunction with \eqref{eqbmo} is used on the first inequality, and Lemma \ref{le8-} {(i)} is used on the last inequality. Hence, the proof of $\mathrm{(i)}$ is completed.  For the second part, by $\mathrm{(i)}$, it is sufficient to prove $\mathrm{(ii)}$ for $k$ large enough and as we will see $k\geq 11$ is sufficient. Note that
 \begin{align*}
      \| (\mathcal{U}_\lambda A) \mathcal{A}_\lambda( (\nabla_x f) 1_{2^{k+1} \Delta \setminus 2^k \Delta})\|_{\L^2(\Delta)}= 0,
 \end{align*}
 for $k\geq 11$. Hence we are left with the task of estimating, for $k\geq 11$,
 \begin{align*}
 \|\mathcal{U}_\lambda A ((\nabla_x f)1_{2^{k+1} \Delta \setminus 2^k \Delta} )\|_{\L^2(\Delta)} &  = \sup_g   \iint_{\ree} (\mathcal{U}_\lambda A ((\nabla_x f)1_{2^{k+1} \Delta \setminus 2^k \Delta} )) \, \cl{g}\,\d x\d t
 \end{align*}
 where the supremum is taken with respect to all $g \in C^{\infty}_0(\Delta;\IC)$ such that $\|g\|_2 = 1$. Using \eqref{form1-} we have
 \begin{align*}
  \iint_{\ree} (\mathcal{U}_\lambda A ((\nabla_x f)1_{2^{k+1} \Delta \setminus 2^k \Delta} )) \, \cl{g}\,\d x\d t= \iint_{\ree} (\mathcal{U}_\lambda\bigl (A-\barint_{Q} D\bigr ) ((\nabla_x f)1_{2^{k+1} \Delta \setminus 2^k \Delta} )) \, \cl{g}\,\d x\d t
 \end{align*}
 Consequently,
 \begin{align}\label{aabb}
 \|\mathcal{U}_\lambda A ((\nabla_x f)1_{2^{k+1} \Delta \setminus 2^k \Delta} )\|_{\L^2(\Delta)} &  = \sup_g   \iint_{\ree} (\mathcal{U}_\lambda
 \bigl (A-\barint_{Q} D\bigr ) ((\nabla_x f)1_{2^{k+1} \Delta \setminus 2^k \Delta} )) \, \cl{g}\,\d x\d t\notag\\ & =  - \sup_g \iint_{\ree} \bigl (A-\barint_{Q} D\bigr ) ((\nabla_xf)1_{2^{k+1} \Delta \setminus 2^k \Delta})) \cdot \cl{ \lambda \nabla_x \mathcal{E}^*_{\lambda}  g} \, \d x \d t \notag\\ &=- \sup_g \iint_{\ree}  ((\nabla_xf)1_{2^{k+1} \Delta \setminus 2^k \Delta})) \cdot \bigl (\bigl (A-\barint_{Q} D\bigr )^\ast \cl{ \lambda \nabla_x \mathcal{E}^*_{\lambda}  g}\bigr) \, \d x \d t\notag\\
 &\lesssim \sup_g \|(\nabla_xf)1_{2^{k+1} \Delta \setminus 2^k \Delta}\|_2 \, \bigl \| \lambda \bigl(A-\barint_{Q} D\bigr)^\ast \nabla_x \mathcal{E}^*_{\lambda}  g\bigr\|_{\L^2( 2^{k+1} \Delta \setminus 2^{k} \Delta)},
 \end{align}
 where $\bigl (A-\barint_{Q} D\bigr )^\ast$ is the (Hermitian) transpose of $\bigl (A-\barint_{Q} D\bigr )$. Let $p>2$ be as in Theorem \ref{thm:reverseholdergradient}. Using H{\"o}lder's inequality,
 \begin{align*}
  \bigl \| \lambda \bigl(A-\barint_{Q} D\bigr)^\ast \nabla_x \mathcal{E}^*_{\lambda}  g\bigr\|_{\L^2( 2^{k+1} \Delta \setminus 2^{k} \Delta)}&\leq
  \bigl \| \bigl(A-\barint_{Q} D\bigr)^\ast\bigr\|_{\L^{2p/(p-2)} (2^{k+1} \Delta)} \bigl \|\lambda\nabla_x \mathcal{E}^*_{\lambda}  g\bigr\|_{\L^p( 2^{k+1} \Delta \setminus 2^{k} \Delta)}.
 \end{align*}
 By the John-Nirenberg’s inequality, see \eqref{magicalineq},
 \begin{align*}
  \bigl \| \bigl(A-\barint_{Q} D\bigr)^\ast\bigr\|_{\L^{2p/(p-2)} (2^{k+1} \Delta)} \lesssim k^{(p-2)/(2p)}|2^k\Delta|^{(p-2)/(2p)}.
 \end{align*}
As $g \in C^{\infty}_0(\Delta;\IC)$ , we have, using \eqref{eq:inverseholdergradient} stated in Remark \eqref{conseq},
\begin{equation*}
    \begin{aligned}
    & \| \nabla_x \mathcal{E}^*_{\lambda} g\|_{\L^p(2^{k+1} \Delta \setminus 2^k \Delta)} \lesssim |2^k\Delta|^{(2-p)/(2p)}\|\nabla_x \mathcal{E}^*_{\lambda} g\|_{\L^2(2^{k+5} \Delta \setminus 2^{k-4} \Delta)},\\
    \end{aligned}
    \end{equation*}
    for all $k\geq 11$ and for $p>2$ as in Theorem \ref{thm:reverseholdergradient}. Combining the estimates in the last three displays,
    \begin{align}\label{aabb1}
  \bigl \| \lambda \bigl(A-\barint_{2^{k+1}Q} D\bigr) \nabla_x \mathcal{E}^*_{\lambda}  g\bigr\|_{\L^2( 2^{k+1} \Delta \setminus 2^{k} \Delta)}&\lesssim
  k^{(p-2)/(2p)}\|\lambda\nabla_x \mathcal{E}^*_{\lambda} g\|_{\L^2(2^{k+5} \Delta \setminus 2^{k-4} \Delta)}\notag\\
  &\lesssim
  k^{(p-2)/(2p)} e^{-\big(\frac{2^k \ell(\Delta)}{c\lambda}\big)} \lesssim  e^{-\big(\frac{2^k \ell(\Delta)}{c\lambda}\big)},
 \end{align}
 where we have also used off-diagonal estimates for $\lambda\nabla_x \mathcal{E}^*_{\lambda}$, that $\lambda\sim \ell(\Delta)$, and that $\|g\|_2 = 1$. Combining \eqref{aabb} and \eqref{aabb1},
 \begin{align}\label{aabb2}
 \|\mathcal{R}_\lambda A ((\nabla_x f)1_{2^{k+1} \Delta \setminus 2^k \Delta} )\|_{\L^2(\Delta)}&=\|\mathcal{U}_\lambda A ((\nabla_x f)1_{2^{k+1} \Delta \setminus 2^k \Delta} )\|_{\L^2(\Delta)}\notag\\
 &\lesssim  e^{-\big(\frac{2^k \ell(\Delta)}{c\lambda}\big)} \|\nabla_x  f\|_{\L^2({2^{k+1} \Delta \setminus 2^k \Delta})},
 \end{align}
 for $k\geq 11$. We can conclude that the proof of $\mathrm{(ii)}$ is complete.

  Having proved \eqref{keyedd} the rest of the argument is similar to proof of \cite[Lem. 2.27]{N1}. Indeed, let $ f \in  C_0^{\infty}(\ree)$, and let $\Delta$ be a parabolic cube satisfying $\lambda \sim \ell(\Delta)$. Note that
  \begin{align*}
  \mathcal{R}_{\lambda} \bigl(\nabla_x \bigl(f  - x \cdot \bariint_{\Delta} \nabla_x f \bigr) \bigr)=\mathcal{R}_{\lambda} (\nabla_x f)
  \end{align*}
  as $\mathcal{R}_{\lambda} 1=0$. Using this we have
 \begin{align*}
     \|\mathcal{R}_{\lambda} (\nabla_x f)\|_{2} & \lesssim \|\mathcal{R}_{\lambda} \bigl(\nabla_x \bigl(f  - x \cdot \bariint_{\Delta} \nabla_x f \bigr) 1_{\Delta} \bigr) \|_{2} \\ &+ \sum_{k=0}^{\infty}  \|\mathcal{R}_{\lambda} \bigl(\nabla_x \bigl(f- x \cdot \bariint_{ \Delta} \nabla_x f  \bigr) 1_{2^{k+1}\Delta \setminus 2^k \Delta}\bigr) \|_{2},
 \end{align*}
 by the triangle inequality, and hence
 \begin{align*}
     \|\mathcal{R}_{\lambda} (\nabla_x f)\|_{2}  & \lesssim \| \nabla_x \bigl( \bigl(f  - x \cdot \bariint_{\Delta} \nabla_x f \bigr) 1_{\Delta} \bigr) \|_{2} \\ &+   \sum_{k=0}^{\infty}  e^{-\big(\frac{2^k \ell(\Delta)}{c\lambda}\big)} \| \nabla_x \bigl( \bigl(f- x \cdot \bariint_{ \Delta} \nabla_x f  \bigr) 1_{2^{k+1}\Delta \setminus 2^k \Delta}\bigr) \|_{2},
 \end{align*}
 by \eqref{keyedd} $\mathrm{(i)}, \mathrm{(ii)}$. Continuing, using the Poincaré inequality, we deduce
 \begin{align*}
     \|\mathcal{R}_{\lambda} (\nabla_x f)\|_{2}  & \lesssim \|\lambda \nabla_x (\nabla_x f)\|_{2} + \|\lambda^2 \partial_t (\nabla_x f)\|_{2} \\&+ \sum_{k=0}^{\infty}  e^{-\big(\frac{2^k \ell(\Delta)}{c\lambda}\big)} \| \bigl(\nabla_x \bigl(  f- x \cdot \bariint_{ 2^{k+1} \Delta} \nabla_x f  \bigr) 1_{2^{k+1}\Delta \setminus 2^k \Delta}\bigr) \|_{2}   \\ &+ \sum_{k=0}^{\infty} \sum_{i=1}^k  e^{-\big(\frac{2^k \ell(\Delta)}{c\lambda}\big)} \| \nabla_x \bigl(x \cdot \bariint_{ 2^{i+1} \Delta} \nabla_x f- x \cdot \bariint_{ 2^{i} \Delta} \nabla_x f  \bigr) 1_{2^{k+1}\Delta \setminus 2^k \Delta} \|_{2}.
 \end{align*}
Furthermore,
 \begin{align*}
     & \sum_{k=0}^{\infty}  e^{-\big(\frac{2^k \ell(\Delta)}{c\lambda}\big)} \| \nabla_x \bigl(  f- x \cdot \bariint_{ 2^{k+1} \Delta} \nabla_x f  \bigr) 1_{2^{k+1}\Delta \setminus 2^k \Delta} \|_{2}  \\ & \lesssim \sum_{k=0}^{\infty}  e^{-\big(\frac{2^k \ell(\Delta)}{c\lambda}\big)} \| \nabla_x f -  \bariint_{ 2^{k+1} \Delta} \nabla_x f  \|_{\L^2(2^{k+1} \Delta)}  \\ & \lesssim    \|\lambda \nabla_x (\nabla_x f)\|_{2} + \|\lambda^2 \partial_t (\nabla_x f)\|_{2},
 \end{align*}
where we have used the Poincaré inequality in the last inequality. Note that
 \begin{align*}
    &  \sum_{k=0}^{\infty} \sum_{i=1}^k  e^{-\big(\frac{2^k \ell(\Delta)}{c\lambda}\big)} \| \nabla_x \bigl(x \cdot \bariint_{ 2^{i+1} \Delta} \nabla_x f- x \cdot \bariint_{ 2^{i} \Delta} \nabla_x f  \bigr) 1_{2^{k+1}\Delta \setminus 2^k \Delta} \|_{2} \\ & \lesssim \sum_{k=0}^{\infty} \sum_{i=1}^k  e^{-\big(\frac{2^k \ell(\Delta)}{c\lambda}\big)} \| \bariint_{ 2^{i+1} \Delta} \nabla_x f-  \bariint_{ 2^{i} \Delta} \nabla_x f   \|_{\L^2(2^{k+1} \Delta)}\\ & \lesssim \sum_{k=0}^{\infty} \sum_{i=1}^k  e^{-\big(\frac{2^k \ell(\Delta)}{c\lambda}\big)} 2^{k/2} |\Delta|^{1/2} \bigl| \bariint_{ 2^{i+1} \Delta}\bigl ( \nabla_x f-  \bariint_{ 2^{i} \Delta} \nabla_x f\bigr )\d x\d t \bigr |.
 \end{align*}
  Furthermore, continuing the estimate
 \begin{align*}
    &  \sum_{k=0}^{\infty} \sum_{i=1}^k  e^{-\big(\frac{2^k \ell(\Delta)}{c\lambda}\big)} 2^{k/2} |\Delta|^{1/2} \bigl| \bariint_{ 2^{i+1} \Delta}\bigl ( \nabla_x f-  \bariint_{ 2^{i} \Delta} \nabla_x f\bigr )\d x\d t \bigr | \\ & \lesssim \sum_{k=0}^{\infty} \sum_{i=1}^k  e^{-\big(\frac{2^k \ell(\Delta)}{c\lambda}\big)} 2^{k/2} |\Delta|^{1/2}  \bariint_{ 2^{i} \Delta} \bigl|\nabla_x f-  \bariint_{ 2^{i+1} \Delta} \nabla_x f \bigr|\d x\d t \\ & \lesssim \sum_{k=0}^{\infty} \sum_{i=1}^k  e^{-\big(\frac{2^k \ell(\Delta)}{c\lambda}\big)} 2^{k/2} |\Delta|^{1/2}  \bigl(\bariint_{ 2^{i+1} \Delta} \bigl|\nabla_x f-  \bariint_{ 2^{i+1} \Delta} \nabla_x f \bigr|^2\d x\d t\bigr)^{\frac{1}{2}}
    \\ & \lesssim  \|\lambda \nabla_x (\nabla_x f)\|_{2} + \|\lambda^2 \partial_t (\nabla_x f)\|_{2},
 \end{align*}
where we have used Cauchy-Schwarz inequality in the one to the last inequality, and the Poincaré inequality in the last inequality. Summing up the previous inequalities completes the proof of the lemma.
\end{proof}


\section{Proof of Theorem \ref{thm:Kato}}\label{sec2}

To prove Theorem \ref{thm:Kato} we first show that the proof can be reduce to the quadratic estimate
\begin{eqnarray}\label{kee}
      |||\lambda\cH \mathcal{E}_\lambda f|||_{2}\lesssim ||\mathbb Df||_{2}=\|\nabla_x f\|_{2}+ \| \dhalf f \|_{2},\   f\in \E(\ree).
      \end{eqnarray}
Recall that  $\mathcal{E}_\lambda = (1+ \lambda^2 \cH)^{-1}$ and that the norm $|||\cdot|||_{2}$ was introduced in \eqref{tnorm}.

In Lemma \ref{Lem: WP on R} we proved that the operator  $\cH$ is maximal accretive,  hence $\cH$ has  a bounded $H^\infty$-calculus, see \cite{Mc, Haase},  and $\cH$ has a unique maximal accretive square root $\sqrt{\cH}$ defined by the functional calculus for sectorial operators. The same is true for the adjoint $\cH^*$ and $\sqrt{\cH^*} = (\sqrt{\cH})^*$. Using \cite[Thm.~5.2.6]{Haase} we can express the unique maximal accretive square root $\sqrt{\cH}$ as
\begin{eqnarray}
\label{est1+kaintro}
\sqrt{\cH}f= \frac{16}{\pi} \int_0^\infty \lambda^3\cH^2 (1+\lambda^2\cH)^{-3} f\, \frac {\d\lambda}\lambda,
\end{eqnarray}
where $f \in \dom(\sqrt{\cH})$ and the integral is understood as an improper Riemann integral in $\L^2(\mathbb R^{n+1})$. We will use  this resolution formula for $\sqrt{\cH}$ to prove Theorem \ref{thm:Kato}. Indeed, testing this identity against $g \in \L^2(\mathbb R^{n+1})$ and applying Cauchy-Schwarz, we obtain
\begin{align*}
|\langle\sqrt{\cH}f,g\rangle_{2}|
&\leq \frac{16}{\pi}|||\lambda \cH(1+\lambda^2\cH)^{-1} f|||_{2}|||\lambda^2\cH^\ast
       (1+\lambda^2\cH^\ast)^{-2}g|||_{2}\\
       &=\frac{16}{\pi} |||\lambda\cH\mathcal{E}_\lambda  f|||_{2}|||\lambda^2\cH^\ast
       (1+\lambda^2\cH^\ast)^{-2}g|||_{2}.
\end{align*}
The second term is controlled by a structural constant times $\|g\|_{2,\mu}$ since $\cH^*$ is maximal accretive in $\L^2(\mathbb R^{n+1})$. Taking the supremum over all $g$ yields
\begin{align*}
    \|\sqrt{\cH}f\|_{2} \lesssim|||\lambda\cH \mathcal{E}_\lambda f|||_{2}.
\end{align*}
Assuming that \eqref{kee} holds we obtain
\begin{align}
\label{ga1}
 \| \sqrt{\cH}f \|_{2} \lesssim \|\nabla_x f\|_{2} +\|\HT \dhalf f \|_{2},
\end{align}
when $f$ is in  $\E(\mathbb R^{n+1}) \cap \dom(\sqrt{\cH}) \supset \dom(\cH)$. Since $\dom(\cH)$ is dense in $\E(\ree)$, see Lemma \ref{Lem: WP on R} $\mathrm{(iii)}$,  and as $\sqrt{\cH}$ is closed, the estimate extends to all $f \in \E(\mathbb R^{n+1}) $. As noted in \cite{AAEN},  $\cH^*$ is similar to an operator in the same class as $\cH$ under conjugation with the `time reversal' $f(t,x) \mapsto f(-t,x)$ and conjugation of $A$. Hence the above  reasoning applies to $\cH^*$ and hence we also have
\begin{align}
\label{ga2}
 \| \sqrt{\cH^*}f \|_{2} \lesssim \|\nabla_x f\|_{2} +\|\HT \dhalf f \|_{2},
\end{align}
whenever $f \in \E(\mathbb R^{n+1})$. In particular, assuming that \eqref{kee} and the corresponding estimate for $\cH^*$ holds, we can conclude that
\begin{align}\label{oneway}
 \|\sqrt {\cH}\, f\|_{2} + \|\sqrt {\cH^*}\, f\|_{2} \lesssim \|\nabla_x f\|_{2}+ \| \dhalf f \|_{2},
\end{align}
 for a every function $f \in \E(\ree)$.  Using \eqref{oneway}, and \eqref{eq2} with $\sigma = 0$ and $\delta$ small enough depending on the structural constants, we obtain for all $f \in \dom(\cH)$ that
 \begin{align*}
       \iint_{\ree} |\nabla_x f|^2  + |\dhalf f|^2\, \d x \d t & \lesssim_{\delta} | \langle \cH f\,, (1+\delta \HT) f \rangle| \\ & =  | \langle \sqrt{\cH} f\,, (\sqrt{\cH})^* (1+\delta \HT)f \rangle |\\ & \lesssim \| \sqrt{\cH} f\|_2  (\|\nabla_x f\|_{2}+ \| \dhalf f \|_{2}).
\end{align*}
 Hence,
\begin{align*}
     \|\nabla_x f\|_{2}+ \| \dhalf f \|_{2} \lesssim \| \sqrt{\cH} f\|_2,
\end{align*}
for all $f\in \dom(\cH)$. Since $\dom(\cH)$ is dense in $\dom(\sqrt{\cH})$ for the graph norm \cite[Prop.~3.1.1(h)]{Haase}, the estimate extends to all $f \in \dom(\sqrt{\cH})$. We can therefore conclude that the proof of Theorem \ref{thm:Kato} is reduced to the estimate in \eqref{kee}.

\subsection{Reduction of \eqref{kee} to a Carleson measure estimate}
      To start the proof of \eqref{kee}, we write
             \begin{eqnarray*}
     \lambda\cH\mathcal{E}_\lambda  f=\lambda\mathcal{E}_\lambda \cH f=\lambda\mathcal{E}_\lambda \cH\P_\lambda f+\lambda\mathcal{E}_\lambda \cH(I-\P_\lambda) f.
      \end{eqnarray*}
Now,
\begin{align*}
 \lambda^{-1}   (I+\lambda \cH) (I- \mathcal{E}_{\lambda}) f = \lambda^{-1}(I+\lambda \cH - I) f= \lambda \mathcal{H} f,
\end{align*}
for $f \in \dom(\cH)$. Hence,
\begin{align}
\label{eq:operatorequality}
  \lambda \mathcal{E}_{\lambda}  \cH  f= (I+\lambda \cH - I) f =  \lambda^{-1} \mathcal{E}_{\lambda} f,
\end{align}
for  $f \in \dom(\cH)$. Now, by Lemma \ref{Lem: WP on R} $\mathrm{(iii)}$ and Lemma \ref{le8-}  $\mathrm{(i)}$, the set $\dom(\cH)$ is dense in $\E(\R^{n+1})$ and the operator $\mathcal{E}_\lambda$ is $\L^2$-bounded. In conclusion, \eqref{eq:operatorequality} holds for every $f \in \E(\R^{n+1})$. Using the identity \eqref{eq:operatorequality},
      the $\L^2$-boundedness of $\mathcal{E}_\lambda$, and Lemma \ref{little2} $\mathrm{(ii)}$, we immediately see that
     \begin{eqnarray*}
    |||\lambda\mathcal{E}_\lambda \cH(I-\P_\lambda) f|||_{2}\lesssim |||\lambda^{-1}(I-\P_\lambda)f|||_{2}\lesssim\|\mathbb Df\|_{2}.
      \end{eqnarray*}
  Using the notation $\mathcal{U}_\lambda =\lambda\mathcal{E}_\lambda \div_x$,  we write
                   \begin{align*}
  \lambda\mathcal{E}_\lambda \cH\P_\lambda f&= -\mathcal{U}_\lambda A\nabla_x\P_\lambda f+\lambda\mathcal{E}_\lambda \dhalf \HT \dhalf \P_\lambda f.
      \end{align*}
      We note that
      \begin{eqnarray*}
  |||\lambda \mathcal{E}_\lambda\dhalf \HT \dhalf \P_\lambda f|||_2 \lesssim  |||\lambda(\dhalf P_\lambda) \HT \dhalf f|||_{2}\lesssim \|\HT \dhalf f\|_{2}\lesssim \|\mathbb Df\|_{2},
      \end{eqnarray*}
    where Lemma \ref{le8-} $\mathrm{(i)}$ is used in the first inequality and Lemma \ref{little1} is used in the second inequality. In the following,
    we will make use of the operator \begin{align*}
\mathcal{R}_\lambda &:=\mathcal{U}_\lambda A -(\mathcal{U}_\lambda A)\mathcal{A}_\lambda
 \end{align*}
 introduced in \eqref{opa1}. Using Lemma \ref{lemedda} we can conclude that $(\mathcal{U}_\lambda  A)$ is well-defined as an element in $\L^2_{\loc}(\mathbb R^{n+1})$. Therefore, using the operators $\mathcal{U}_\lambda$ and $\mathcal{R}_\lambda$, we write
      \begin{eqnarray*}
      \mathcal{U}_\lambda A\nabla_x\P_\lambda f=\mathcal{U}_\lambda A\P_\lambda\nabla_x f=\mathcal{R}_\lambda \P_{\lambda} \nabla_x f+(\mathcal{U}_\lambda A)\mathcal{A}_\lambda  \P_\lambda  \nabla_x f.\end{eqnarray*}
    Hence, using Lemma \ref{newes+}, we obtain that
      \begin{eqnarray*}
      |||\mathcal{R}_\lambda\P_\lambda \nabla_x f|||_{2}\lesssim |||\lambda \nabla_x (\P_\lambda \nabla_x f)|||_{2}+|||\lambda^{2} \partial_t(\P_\lambda \nabla_x f)|||_{2},
       \end{eqnarray*}
       uniformly in $\lambda$. Using this and Lemma \ref{little1}, we can conclude that
        \begin{eqnarray*}
      |||\mathcal{R}_\lambda \P_\lambda \nabla_x f|||_{2}\lesssim \|\mathbb D f\|_{2}.
       \end{eqnarray*}
       To conclude the proof of Theorem \ref{thm:Kato}, we use Lemma \ref{ilem2--}  stated and proved below. The lemma states that $$|\mathcal{U}_\lambda A|^2\, \frac{\d x\d t\d\lambda}{\lambda}$$ is a Carleson measure and that we have good control of the constants. Hence,
        \begin{eqnarray*}
      |||(\mathcal{U}_\lambda A)\mathcal{A}_\lambda \P_\lambda \nabla_x f|||_{2}\lesssim\|\mathbb D f\|_{2},
       \end{eqnarray*}
       by the dyadic Carleson's inequality, see \cite[Lem. 8.2]{AAEN}. This completes the proof of the estimate in \eqref{kee} and hence the proof of Theorem \ref{thm:Kato} modulo Lemma \ref{ilem2--}. The rest of the section is devoted to the proof of the remaining lemma, Lemma \ref{ilem2--}.

       \begin{lem} \label{ilem2--} Consider $A(x,t)=(A_1(x,t),\cdot \cdot \cdot,A_n(x,t))$ where $A_i \in \mathbb C^{n}$ for $i\in \{1,...,n\}.$ Define  $$\mathcal{U}_{\lambda} A := \Big(\mathcal{U}_{\lambda}  A_1,\cdot \cdot \cdot , \mathcal{U}_{\lambda} A_n \Big).$$
       Then,
\begin{eqnarray*}\label{crucacar+}\int_0^{\ell(\Delta)}\iint_{\Delta}|\mathcal{U}_\lambda A|^2\frac {\d x\d t \d\lambda}\lambda\lesssim |\Delta|,
\end{eqnarray*}
for all dyadic parabolic cubes $\Delta \subset\mathbb R^{n+1}$.
     \end{lem}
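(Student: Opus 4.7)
The plan is to prove the Carleson measure estimate of Lemma \ref{ilem2--} by a $T(b)$ argument along the lines of \cite{N1,AAEN,EH}. Since we may argue componentwise, it suffices to prove that for each dyadic parabolic cube $\Delta_0\subset\ree$ and each $j\in\{1,\ldots,n\}$,
$$\int_0^{\ell(\Delta_0)}\iint_{\Delta_0}|\mathcal U_\lambda A_j|^2\,\frac{\d x\,\d t\,\d\lambda}{\lambda}\lesssim|\Delta_0|,$$
where the integrand is well-defined by Corollary \ref{cor}. A finite covering of the unit sphere in $\IC^n$ by small spherical caps then reduces this to the analogous bound for $(\mathcal U_\lambda A)\cdot\xi$ with a fixed unit $\xi\in\IC^n$.

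For each dyadic $\Delta\subset\Delta_0$ I would construct test functions
$$f_\Delta^\xi:=\mathcal E_{\epsilon\ell(\Delta)}\bigl(\eta_\Delta\,\xi\cdot x\bigr),$$
where $\eta_\Delta\in C_0^\infty(\ree)$ is a parabolic cutoff equal to $1$ on $2\Delta$ and supported in $4\Delta$, and $\epsilon>0$ is a small parameter to be chosen at the end. Using Lemma \ref{le8-} one obtains $\|\nabla_x f_\Delta^\xi\|_2+\|\dhalf f_\Delta^\xi\|_2\lesssim|\Delta|^{1/2}$, and the resolvent identity $(I+(\epsilon\ell(\Delta))^2\cH)f_\Delta^\xi=\eta_\Delta\,\xi\cdot x$ forces $\nabla_x f_\Delta^\xi$ to be close to $\xi$ in average on $\Delta$ when $\epsilon$ is small, so these functions play the role of local pseudo-accretive test functions adapted to the scale $\ell(\Delta)$.

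With these in hand, I would decompose, for $(x,t)\in\Delta_0$ and $0<\lambda<\ell(\Delta_0)$, and with $\Delta=\Delta(x,t,\lambda)$ the unique dyadic subcube of sidelength $\sim\lambda$ containing $(x,t)$,
$$(\mathcal U_\lambda A)\cdot\xi=\mathcal R_\lambda(\nabla_x f_\Delta^\xi)+(\mathcal U_\lambda A)\bigl(\mathcal A_\lambda\nabla_x f_\Delta^\xi-\xi\bigr)+\mathcal U_\lambda\bigl(A(\xi-\nabla_x f_\Delta^\xi)\bigr)+\lambda\mathcal E_\lambda\bigl(\partial_t f_\Delta^\xi-\cH f_\Delta^\xi\bigr),$$
where the last term comes from writing $\mathcal U_\lambda(A\nabla_x f_\Delta^\xi)=\lambda\mathcal E_\lambda\div_x(A\nabla_x f_\Delta^\xi)=\lambda\mathcal E_\lambda(\partial_t f_\Delta^\xi-\cH f_\Delta^\xi)$. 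The $\mathcal R_\lambda$-piece is bounded directly by Lemma \ref{newes+} combined with the energy estimate on $f_\Delta^\xi$; the resolvent term is rewritten using $\cH f_\Delta^\xi=(\epsilon\ell(\Delta))^{-2}(\eta_\Delta\,\xi\cdot x-f_\Delta^\xi)$ and controlled via Lemma \ref{le8-} plus the off-diagonal estimate Lemma \ref{le8-+} to absorb the contribution of $\eta_\Delta\,\xi\cdot x$ supported away from $2\Delta$.

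The main obstacle is the principal term $(\mathcal U_\lambda A)\bigl(\mathcal A_\lambda\nabla_x f_\Delta^\xi-\xi\bigr)$, which I would handle by a dyadic stopping-time argument in the spirit of \cite[Sec.~4]{AHLMcT} and \cite[Sec.~5]{AAEN}: starting from $\Delta_0$, select the maximal dyadic subcubes on which the accretivity defect $|\bariint_\Delta\nabla_x f_\Delta^\xi-\xi|$ first exceeds a prescribed small threshold. The energy bound on $f_\Delta^\xi$ controls the total Carleson mass of these stopping cubes, while on the complementary sawtooth the uniform-in-$\lambda$ operator bound $\|(\mathcal U_\lambda A)\mathcal A_\lambda\|_{2\to 2}\lesssim 1$ of Corollary \ref{cor} lets one compare the contribution to a quantity that can be absorbed; choosing $\epsilon$ and the accretivity threshold small, together with a standard self-improvement/induction on the total Carleson norm (truncated at a large finite scale and then passed to the limit), closes the estimate. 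The $\BMO$ antisymmetric structure of $D$ enters only through Lemma \ref{p3.1} and the normalization \eqref{normal}, both of which are already absorbed into Corollary \ref{cor} and Lemma \ref{newes+}.
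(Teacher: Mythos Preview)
Your outline is the right $T(b)$ framework and matches the paper's strategy (resolvent-based test functions $f_\Delta^\zeta=\mathcal E_{\epsilon\ell(\Delta)}L_\Delta^\zeta$, a sector/stopping-time reduction, and an endpoint square function estimate), but two concrete points do not work as written.

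First, your four-term decomposition of $(\mathcal U_\lambda A)\cdot\xi$ is algebraically incorrect: since $\lambda\mathcal E_\lambda(\partial_t f-\cH f)=\mathcal U_\lambda(A\nabla_x f)$ already, and since $\mathcal U_\lambda(A\xi)=(\mathcal U_\lambda A)\cdot\xi$, the four terms on your right-hand side sum to $\mathcal U_\lambda(A\nabla_x f_\Delta^\xi)$, not to $(\mathcal U_\lambda A)\cdot\xi$; the term $\mathcal U_\lambda(A(\xi-\nabla_x f_\Delta^\xi))$ is redundant once you keep the last one. The correct identity, used in the paper, is
\[
(\mathcal U_\lambda A)\cdot\mathcal A_\lambda\nabla_x f_\Delta^\zeta
= -\lambda\mathcal E_\lambda\cH f_\Delta^\zeta
+\bigl(-\mathcal R_\lambda\nabla_x f_\Delta^\zeta+\lambda\mathcal E_\lambda\dhalf\HT\dhalf f_\Delta^\zeta\bigr),
\]
and the left-hand side is what one bounds, after the stopping-time reduction, not $(\mathcal U_\lambda A)\cdot\xi$ itself.

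Second, and more structurally, you let $\Delta=\Delta(x,t,\lambda)$ vary with the scale $\lambda$, so that a different test function is used at every $\lambda$. This breaks the argument in two places. The $\cH f_\Delta^\xi$ piece then contributes $\|\lambda\mathcal E_\lambda\cH f_\Delta^\xi\|_{\L^2(\Delta)}^2\lesssim\epsilon^{-4}|\Delta|$ at \emph{each} scale, which is not integrable against $\d\lambda/\lambda$. And you cannot invoke Lemma~\ref{newes+} on $\mathcal R_\lambda(\nabla_x f_\Delta^\xi)$ since $f_\Delta^\xi$ is not smooth; in the paper this is repaired by inserting the smoothing $\P_\lambda$ and using Littlewood--Paley (Lemma~\ref{little1}), but that only sums because $f_\Delta^\zeta$ is held fixed while $\lambda$ runs over $(0,\ell(\Delta))$. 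The paper's organization is: (a) the stopping-time/sector argument is packed entirely into Lemma~\ref{ilem2--+}, reducing the Carleson norm of $|\mathcal U_\lambda A|^2$ to that of $|(\mathcal U_\lambda A)\cdot\mathcal A_\lambda\nabla_x f_\Delta^\zeta|^2$ with the \emph{single} test function attached to the top cube $\Delta$; then (b) the latter is bounded by $|\Delta|$ with no further stopping time, via the decomposition above and the $\P_\lambda$ smoothing. Your sketch contains all the right ingredients, but you should reorder them this way.
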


  The proof of Lemma \ref{ilem2--} is based on the use of appropriate local $Tb$-type test functions.

\subsection{Construction of appropriate local Tb-type test functions}\label{subtb} Let $\zeta\in \IC^{n}$ with $|\zeta|=1$ and let  $\zeta_{i}$ denote the $i$-th component of $\zeta$ for $1\leq i\leq n$.
We let $\chi, \eta$ be smooth functions on $\R^{n}$ and $\R$, respectively, whose values are in $[0,1]$. We construct the function $\chi$ to equal $1$ on $[-1/2,1/2]^n$ having support in $(-1,1)^{n}$, and the function $\eta$ to equal $1$ on $[-1/4,1/4]$ having support in $[-1,1]$. We fix a parabolic dyadic cube $\Delta$, and we denote its center by $(x_{\Delta}, t_{\Delta})$. We first introduce
\begin{align*}
 \chi_{\Delta}(x,t) &:= \chi \bigg( \frac{x-x_{\Delta}}{\ell(\Delta)}\bigg) \eta \bigg(\frac{t-t_{\Delta}}{\ell(\Delta)^2}\bigg).
 \end{align*}
Based on $\zeta$ and $\chi_{\Delta}$, we introduce
\begin{align*}
 L^\zeta_{\Delta}(x,t):= \chi_{\Delta}(x,t)(\Phi_\Delta(x)\cdot \overline{\zeta}),\ \Phi_\Delta(x):=(x-x_\Delta).
\end{align*}
Clearly, $L^\zeta_{\Delta} \in \E(\mathbb R^{n+1})$. Using the function $ L^\zeta_{\Delta}$, and  $0<\epsilon\ll 1$, we define the test function
\begin{align}\label{testfunction}
 f^\zeta_{\Delta,\epsilon}:=\mathcal{E}_{\epsilon\ell(\Delta)} L^\zeta_{\Delta}=(I+(\epsilon\ell(\Delta))^2\cH)^{-1} L^\zeta_{\Delta}.
 \end{align}
We need the following lemma for the T(b) type estimates.

\begin{lem}\label{laa} Let $\zeta\in \IC^{n}$ with $|\zeta|=1$, and let $0<\epsilon\ll 1$ be a degree of freedom. Given a parabolic dyadic cube $\Delta$, we assume that $f^\zeta_{\Delta,\epsilon }$ is the test function defined in \eqref{testfunction}. Then,
\begin{align*}
\mathrm{(i)}&\quad\iint_{\mathbb R^{n+1}}|f^\zeta_{\Delta, \epsilon }-L^\zeta_{\Delta}|^2\, \d x\d t \lesssim (\epsilon \ell(\Delta))^2|\Delta|,\notag\\
\mathrm{(ii)}&\quad\iint_{\mathbb R^{n+1}}|\mathbb D(f^\zeta_{\Delta,\epsilon}-L^\zeta_{\Delta})|^2\, \d x\d t \lesssim |\Delta|, \notag\\
\mathrm{(iii)}&\quad\|\nabla_x f^\zeta_{\Delta, \epsilon}\|^2_{2} + \|\dhalf f^\zeta_{\Delta, \epsilon}\|^2_{2} \lesssim |\Delta|.
\end{align*}
\end{lem}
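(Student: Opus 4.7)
Set $\lambda := \epsilon\ell(\Delta)$ and $v := f^\zeta_{\Delta,\epsilon} - L^\zeta_\Delta$. I first record the direct size estimates on $L^\zeta_\Delta$. Since $L^\zeta_\Delta$ is smooth and supported in a bounded parabolic dilate of $\Delta$, with $|L^\zeta_\Delta| \lesssim \ell(\Delta)$, $|\nabla_x L^\zeta_\Delta| \lesssim 1$, and $|\partial_t L^\zeta_\Delta| \lesssim \ell(\Delta)^{-1}$ on its support, the bounds
\[\|L^\zeta_\Delta\|_2^2 \lesssim \ell(\Delta)^2 |\Delta|,\qquad \|\nabla_x L^\zeta_\Delta\|_2^2 \lesssim |\Delta|,\qquad \|\partial_t L^\zeta_\Delta\|_2^2 \lesssim \ell(\Delta)^{-2} |\Delta|\]
are straightforward, and then $\|\dhalf L^\zeta_\Delta\|_2^2 \leq \|L^\zeta_\Delta\|_2 \|\partial_t L^\zeta_\Delta\|_2 \lesssim |\Delta|$ by Plancherel and Cauchy--Schwarz.

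By definition of $f^\zeta_{\Delta,\epsilon}$, $(I+\lambda^2\cH)v = -\lambda^2\cH L^\zeta_\Delta$ in $\E(\ree)^*$, so $v = -\lambda^2\mathcal{E}_\lambda \cH L^\zeta_\Delta$. Factorizing $\partial_t = \dhalf\HT\dhalf$, distributionally
\[\cH L^\zeta_\Delta = \dhalf\HT\dhalf L^\zeta_\Delta - \div_x(A\nabla_x L^\zeta_\Delta).\]
The obstacle is that $A\nabla_x L^\zeta_\Delta$ need not lie in $\L^2$, since $D$ is only $\BMO$ and potentially unbounded. To resolve this, let $cQ$ be a fixed dilate of the spatial projection of $\Delta$ containing the support of $L^\zeta_\Delta$. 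The $t$-dependent, $x$-independent matrix $(\barint_{cQ} D)(t)$ is real and anti-symmetric, so by the same integration-by-parts computation as in subsection \ref{keysec} I get, distributionally, $\div_x\bigl((\barint_{cQ} D)\nabla_x L^\zeta_\Delta\bigr) = 0$. I can therefore replace $A$ by $A - \barint_{cQ} D = S + (D - \barint_{cQ} D)$, and the modified flux $g := (A - \barint_{cQ} D)\nabla_x L^\zeta_\Delta$, supported in $c\Delta$, satisfies $\|g\|_2^2 \lesssim |\Delta|$, using \eqref{ellip} for the $S$-part and \eqref{bmoo1} (John--Nirenberg) for the $D-\barint_{cQ} D$ part, together with $\|\nabla_x L^\zeta_\Delta\|_\infty \lesssim 1$.

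This yields the representation
\[v = -\lambda^2\mathcal{E}_\lambda\dhalf\bigl(\HT\dhalf L^\zeta_\Delta\bigr) + \lambda^2 \mathcal{E}_\lambda \div_x(g),\]
where both $\HT\dhalf L^\zeta_\Delta \in \L^2$ and $g \in \L^2(\ree;\IC^n)$. Applying Lemma \ref{le8-}(ii)--(iii) and the $\L^2$-isometry of $\HT$,
\[\|v\|_2 \lesssim \lambda\bigl(\|\dhalf L^\zeta_\Delta\|_2 + \|g\|_2\bigr) \lesssim \lambda|\Delta|^{1/2} = \epsilon\ell(\Delta)|\Delta|^{1/2},\]
which after squaring gives (i), and
\[\|\mathbb D v\|_2 \lesssim \|\dhalf L^\zeta_\Delta\|_2 + \|g\|_2 \lesssim |\Delta|^{1/2},\]
which gives (ii). Finally, (iii) follows by the triangle inequality $\|\mathbb D f^\zeta_{\Delta,\epsilon}\|_2 \leq \|\mathbb D L^\zeta_\Delta\|_2 + \|\mathbb D v\|_2 \lesssim |\Delta|^{1/2}$, combining the first paragraph with (ii).

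The only real difficulty is the $\BMO$-unboundedness of $D$, which prevents a naive application of the resolvent bounds to $\cH L^\zeta_\Delta$. This is resolved entirely by subtracting off the $x$-independent, anti-symmetric matrix $\barint_{cQ} D$ via the identity of subsection \ref{keysec}; after this reduction the estimates are an immediate consequence of Lemma \ref{le8-}.
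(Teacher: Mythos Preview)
Your proof is correct and takes a genuinely different route from the paper. The paper handles the problematic term $\mathcal{E}_{\epsilon\ell(\Delta)}\div_x(A\nabla_x L^\zeta_\Delta)$ by duality: it pairs against $g\in\L^2$, moves the resolvent to $g$ as $\mathcal{E}^*_{\epsilon\ell(\Delta)}$, and then estimates $\iint A\nabla_x L^\zeta_\Delta\cdot\cl{\nabla_x\mathcal{E}^*_{\epsilon\ell(\Delta)}g}$ via the compensated compactness estimate of Lemma~\ref{p3.1} together with Lemma~\ref{le8-}. You instead invoke the algebraic identity of subsection~\ref{keysec} to subtract the $x$-independent anti-symmetric matrix $(\barint_{cQ} D)(t)$, which does not change the divergence, and then the modified flux $g=(S+D-\barint_{cQ} D)\nabla_x L^\zeta_\Delta$ lies in $\L^2$ by John--Nirenberg (\eqref{bmoo0} or \eqref{bmoo1}) with the right bound, so Lemma~\ref{le8-}(ii)--(iii) applies directly.

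Your argument is slightly more elementary for this particular lemma, since it bypasses both the duality step and the compensated compactness machinery; the price is that it is specific to the test function $L^\zeta_\Delta$ (compact spatial support, bounded gradient), whereas the paper's duality/Lemma~\ref{p3.1} pattern is the same mechanism used throughout to show boundedness of the sesquilinear form and is reusable for general $u\in\E(\ree)$. One minor remark: to make the distributional identity $\div_x\bigl((\barint_{cQ} D)\nabla_x L^\zeta_\Delta\bigr)=0$ completely rigorous you implicitly rely on $(\barint_{cQ} D)(t)$ being locally integrable in $t$, which is guaranteed by the normalization~\eqref{normal} already in force.
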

\begin{proof} Note that
\begin{align*}
f^\zeta_{\Delta,\epsilon }-L^\zeta_{\Delta}&=-(\epsilon\ell(\Delta))^2\mathcal{E}_{\epsilon\ell(\Delta)}\cH L^\zeta_{\Delta}\notag\\
&=
-(\epsilon\ell(\Delta))^2\mathcal{E}_{\epsilon\ell(\Delta)}\dhalf H_t \dhalf L^\zeta_{\Delta}+(\epsilon\ell(\Delta))^2\mathcal{E}_{\epsilon\ell(\Delta)}
\div_x(A\nabla_x L^\zeta_{\Delta}),\end{align*}
where we used the identity $\partial_t = \dhalf H_t \dhalf.$ Consider $g \in \L^2(\mathbb{R}^{n+1})$ and note that
\begin{align*}
    &\iint_{\mathbb{R}^{n+1}}
\mathcal{E}_{\epsilon\ell(\Delta)}
\div_x(A\nabla_x L^\zeta_{\Delta})(x,t)\, \overline{g}(x,t) \, \d x \d t  =-\iint_{\mathbb{R}^{n+1}}
A\nabla_x L^\zeta_{\Delta}(x,t)\, \overline{\nabla_x \mathcal {E}^*_{\epsilon\ell(\Delta)}g}(x,t) \, \d x \d t.
\end{align*}
Consequently, using Lemma $\ref{p3.1}$ in conjunction with \eqref{eqbmo}, and Lemma \ref{le8-}, we deduce that
\begin{align*}
    &\biggl |\iint_{\mathbb{R}^{n+1}}
\mathcal{E}_{\epsilon\ell(\Delta)}
\div_x(A\nabla_x L^\zeta_{\Delta})(x,t)\, \overline{g}(x,t) \, \d x \d t \biggr |&{\lesssim}
\|\nabla_x L^\zeta_{\Delta}\|_{2} \|\nabla_x \mathcal {E}^*_{\epsilon\ell(\Delta)}g\|_{2} \lesssim \frac{1}{\epsilon \ell(\Delta)} \|\nabla_x L^{\zeta}_{\Delta} \|_{2} \|g\|_{2},
\end{align*} for every $g \in \L^2(\mathbb{R}^{n+1})$. Hence, \begin{align*}
    \|\mathcal{E}_{\epsilon\ell(\Delta)}
\div_x(A\nabla_x L^\zeta_{\Delta})\|_{2} {\lesssim} \frac{1}{\epsilon \ell(\Delta)} \|\nabla_x L^{\zeta}_{\Delta} \|_{2}.
\end{align*}
Putting estimates together, using also the uniform $\L^2$-boundedness of the operator $(\epsilon\ell(\Delta))\mathcal{E}_{\epsilon\ell(\Delta)}\dhalf$, see Lemma \ref{le8-}, we obtain
\begin{align*}
\iint_{\mathbb R^{n+1}}|f^\zeta_{\Delta,\epsilon }-L^\zeta_{\Delta}|^2\, \d x\d t &\lesssim \iint_{\mathbb R^{n+1}}|(\epsilon\ell(\Delta))\mathbb DL^\zeta_{\Delta}|^2\, \d x\d t.
\end{align*}
Furthermore,
\begin{align*}
\iint_{\mathbb R^{n+1}}|\mathbb DL^\zeta_{\Delta}|^2\, \d x\d t &\lesssim \iint_{\mathbb R^{n+1}}|\nabla_x L^\zeta_{\Delta}|^2\, \d x\d t +\iint_{\mathbb R^{n+1}}|\HT\dhalf L^\zeta_{\Delta}|^2\, \d x\d t \lesssim |\Delta|,
\end{align*}
by the construction of $L^\zeta_{\Delta}$, and elementary estimates for $\HT\dhalf L^\zeta_{\Delta}$ based on homogeneity. Similarly, we deduce that
\begin{eqnarray*}
\iint_{\mathbb R^{n+1}}|\mathbb D(f^\zeta_{\Delta,\epsilon}-L^\zeta_{\Delta})|^2\, \d x\d t \lesssim |\Delta|.
\end{eqnarray*}
This proves $\mathrm{(i)}$ and $\mathrm{(ii)}$. To prove $\mathrm{(iii)}$, we simply use $\mathrm{(ii)}$ and note that
\begin{align*}
\iint_{\mathbb R^{n+1}}|\mathbb Df^\zeta_{\Delta,\epsilon }|^2\, \d x\d t &\lesssim |\Delta|+\iint_{\mathbb R^{n+1}}|\mathbb D L^\zeta_{\Delta}|^2\, \d x\d t \lesssim |\Delta|.
\end{align*}
This proves $\mathrm{(iii)}$.
\end{proof}

\subsection{Verifying the Carleson measure estimate} The proof of Lemma \ref{ilem2--}, and hence the remainder of the proof follows, exactly as in \cite{AAEN}. Hence, we will only outline the crucial components of the proof.
\begin{lem} \label{ilem2--+} Given a parabolic dyadic cube $\Delta$, let $f^\zeta_{\Delta,\epsilon }$ be the test function defined in  \eqref{testfunction}. There exists $\epsilon\in (0,1)$, depending only on the structural constants, and a finite set $W$ of unit vectors in
     $\mathbb C^{n}$, whose
     cardinality depends on $\epsilon$ and $n$, such that
     \begin{eqnarray*}
 \sup_{\Delta} \frac 1{|\Delta|}\int_0^{\ell(\Delta)}\iint_{\Delta}|\mathcal{U}_\lambda A|^2\frac {\d x\d t \d\lambda} \lambda\lesssim\sum_{\zeta\in W} \sup_{\Delta} \frac 1{|\Delta|}\int_0^{\ell(\Delta)}\iint_{\Delta}|(\mathcal{U}_\lambda A)\mathcal{A}_\lambda \nabla_x f^\zeta_{\Delta,\epsilon }|^2\frac {\d x\d t \d\lambda}\lambda,
     \end{eqnarray*}
 where the supremum is taken over all dyadic parabolic cubes $\Delta \subset \ree$ and $\mathcal{A}_\lambda $ is the dyadic averaging operator induced by $\Delta$ and defined in \eqref{dy}.
     \end{lem}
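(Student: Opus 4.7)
The argument is a $T(b)$-type reduction using the test functions $f^\zeta_{\Delta,\epsilon}$. First, we choose the finite set $W$ of unit vectors in $\IC^n$ by a compactness/linear-algebra argument so that
$$|v|^2 \leq C_W \sum_{\zeta \in W} |v \cdot \overline{\zeta}|^2, \qquad v \in \IC^n,$$
with $\#W$ and $C_W$ depending only on $n$ and on the degree of freedom $\epsilon$ needed for the error absorption. Applying this pointwise with $v = \mathcal{U}_\lambda A(x,t)$ reduces the task to bounding, for each $\zeta\in W$ and each dyadic parabolic cube $\Delta$, the quantity $|\Delta|^{-1}\int_0^{\ell(\Delta)}\iint_\Delta |(\mathcal{U}_\lambda A)\cdot\overline{\zeta}|^2 \frac{\d x\d t\d\lambda}{\lambda}$.

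The geometric heart of the argument is that $L^\zeta_\Delta = \chi_\Delta(\Phi_\Delta\cdot\overline{\zeta})$ satisfies $\nabla_x L^\zeta_\Delta \equiv \overline{\zeta}$ throughout $\{\chi_\Delta = 1\}$, a region containing a parabolic sub-cube of $\Delta$ of comparable measure. For $(x,t)$ in a slightly shrunk sub-region $\Delta^*\subset \Delta$ and $0<\lambda\leq c_0\ell(\Delta)$, the averaging cube $\hat{\Delta}_\lambda(x,t)$ sits inside $\{\chi_\Delta=1\}$, so $\mathcal{A}_\lambda\nabla_x L^\zeta_\Delta(x,t) = \overline{\zeta}$ exactly. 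On this good region we then decompose
$$(\mathcal{U}_\lambda A)\cdot\overline{\zeta} = (\mathcal{U}_\lambda A)\cdot\mathcal{A}_\lambda\nabla_x f^\zeta_{\Delta,\epsilon} + (\mathcal{U}_\lambda A)\cdot\mathcal{A}_\lambda\nabla_x g, \qquad g := L^\zeta_\Delta - f^\zeta_{\Delta,\epsilon},$$
so that the first term contributes precisely to the right-hand side of Lemma \ref{ilem2--+}.

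The two remaining contributions -- the residue from the complement of the good region and the $g$-error -- will be absorbed. The complement has $(x,t)$-measure bounded by a fixed fraction of $|\Delta|$, and the $\lambda$-window outside $(0,c_0\ell(\Delta)]$ contributes only a logarithmic factor, so the uniform $L^2$-bound $\|\mathcal{U}_\lambda A\|_{L^2(\Delta)}^2\lesssim |\Delta|$, derived from Corollary \ref{cor} and a covering by cubes of side $\lambda$, keeps this residue controllable. For the $g$-error we use the factorization
$$(\mathcal{U}_\lambda A)\cdot\mathcal{A}_\lambda\nabla_x g = \lambda\mathcal{E}_\lambda\div_x(A\nabla_x g) - \mathcal{R}_\lambda(\nabla_x g),$$
where Lemma \ref{newes+} controls $\|\mathcal{R}_\lambda(\nabla_x g)\|_2$, while the identity $\lambda^2\cH\mathcal{E}_\lambda = I-\mathcal{E}_\lambda$ recasts the first piece as $\lambda\mathcal{E}_\lambda\partial_t g - \lambda^{-1}(I-\mathcal{E}_\lambda)g$, estimated via Lemma \ref{le8-} together with Lemmas \ref{little1}--\ref{little3}. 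Combining the two scale-dependent bounds $\|g\|_2\lesssim\epsilon\ell(\Delta)|\Delta|^{1/2}$ and $\|\mathbb D g\|_2\lesssim |\Delta|^{1/2}$ from Lemma \ref{laa}, and splitting the $\d\lambda/\lambda$-integration at $\lambda\approx\epsilon\ell(\Delta)$, the total $g$-error is of size $\epsilon^\alpha|\Delta|$ for some $\alpha>0$; taking $\epsilon$ small enough then closes the absorption.

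The main difficulty will be to secure genuine $\epsilon$-decay in the $g$-error, not merely boundedness -- otherwise the $\d\lambda/\lambda$-integration would diverge logarithmically near $\lambda=0$. Extracting this decay requires a careful coupling of the $\lambda$-scale to $\epsilon\ell(\Delta)$ and uses both Lemma \ref{little2}-type Littlewood--Paley bounds for $(I-\mathcal{E}_\lambda)g$ and the off-diagonal decay of $\mathcal{E}_\lambda$ developed in Section \ref{off}. Once this quantitative smallness is in hand, the reduction is immediate.
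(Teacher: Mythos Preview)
There is a genuine gap. Your plan departs from the standard stopping-time/sectorial $T(b)$ argument (which is what the paper invokes via the reference to \cite[Lem.~8.5]{AAEN}), and the alternative route you propose breaks down at the $g$-error step. The crucial point is that Lemma \ref{laa} supplies an $\epsilon$-gain only for $\|g\|_2$, \emph{not} for $\|\nabla_x g\|_2$ or $\|\mathbb{D}g\|_2$, which are merely $\lesssim |\Delta|^{1/2}$. Since your error term $(\mathcal{U}_\lambda A)\cdot\mathcal{A}_\lambda\nabla_x g$ involves $\nabla_x g$, none of the natural square-function bounds (via Corollary \ref{cor}, Lemma \ref{little2}, or your identity $\mathcal{U}_\lambda(A\nabla_x g)=-\lambda^{-1}(I-\mathcal{E}_\lambda)g+\lambda\mathcal{E}_\lambda\partial_t g$) yield anything better than $O(|\Delta|)$ after the $\d\lambda/\lambda$ integration; there is no $\epsilon^\alpha$ to extract. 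For instance, splitting at $\lambda=\epsilon\ell(\Delta)$: on $\lambda>\epsilon\ell(\Delta)$ the bound $\|\lambda^{-1}(I-\mathcal{E}_\lambda)g\|_2\lesssim\lambda^{-1}\|g\|_2$ integrates to $\sim|\Delta|$ with no gain; on $\lambda<\epsilon\ell(\Delta)$ you are forced back to $\|\mathbb{D}g\|_2$, or to second derivatives of $g$ that Lemma \ref{laa} does not control (and note $L^\zeta_\Delta\notin\dom(\cH)$ here since $A$ is unbounded, so you cannot simply write $\lambda\mathcal{E}_\lambda\cH g$). Separately, the region $\Delta\setminus\Delta^*$ has measure a \emph{fixed} fraction of $|\Delta|$ (roughly one half, coming from the support of $\eta$), not a fraction that shrinks with $\epsilon$.

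The argument the paper cites is structurally different. One covers the unit sphere of $\IC^n$ by finitely many small cones $C_\zeta$ and, for each $\zeta$, runs a stopping-time over dyadic subcubes of $\Delta$, stopping when the average of $\nabla_x f^\zeta_{\Delta,\epsilon}$ leaves a fixed neighbourhood of $\bar\zeta$ (together with an auxiliary stopping condition bounding averages of $|\nabla_x f^\zeta|^2$). The total measure of stopped cubes is shown to be at most $(1-\eta)|\Delta|$ for some $\eta>0$ using Lemma \ref{laa}(i) and (iii); \emph{this} is where the $\epsilon$-gain on $\|g\|_2$ is spent. Above the stopped cubes one has $|\mathcal{A}_\lambda\nabla_x f^\zeta_{\Delta,\epsilon}-\bar\zeta|$ small, so for $\mathcal{U}_\lambda A$ pointing into $C_\zeta$ the cone geometry gives $|(\mathcal{U}_\lambda A)\cdot\mathcal{A}_\lambda\nabla_x f^\zeta_{\Delta,\epsilon}|\gtrsim|\mathcal{U}_\lambda A|$ \emph{directly}, with no residual error term. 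The contribution from below the stopped cubes is absorbed into the left-hand supremum by a John--Nirenberg-type iteration; Lemma \ref{l11+-} provides the a priori finiteness needed to close this loop.
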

\begin{proof} For the proof see \cite[Lem. 8.5]{AAEN}. In fact, the assumption that $A$ has $\BMO $ coefficients does not add complication to this part of  the argument, as the argument relies on Lemma \ref{lemedda} and Lemma \ref{laa} only. We here therefore omit further details.
\end{proof}

To continue the proof of Lemma \ref{ilem2--}, we first note that Lemma \ref{ilem2--+} implies
     that it suffices to prove that
\begin{eqnarray}\label{ff1}
\int_0^{\ell(\Delta)}\iint_\Delta|(\mathcal{U}_\lambda A)\mathcal{A}_\lambda \nabla_x f^\zeta_{\Delta,\epsilon }|^2\frac {\d x\d t \d\lambda}\lambda\lesssim |\Delta|,
\end{eqnarray}
for all dyadic parabolic cubes $\Delta\subset\mathbb R^{n+1}$ and for each $\zeta\in W$. In the following, we will simply, with a slight abuse of notation but consistently, drop the $\cdot$ in \eqref{ff1}. We write
\begin{align*}
(\mathcal{U}_\lambda A)\mathcal{A}_\lambda \nabla_x f^\zeta_{\Delta,\epsilon }=(\mathcal{U}_\lambda A\nabla_x f^\zeta_{\Delta,\epsilon }-\lambda\mathcal{E}_\lambda\dhalf H_t \dhalf f^\zeta_{\Delta,\epsilon })+(-\mathcal{R}_\lambda \nabla_x f^\zeta_{\Delta,\epsilon }+\lambda\mathcal{E}_\lambda \dhalf H_t \dhalf f^\zeta_{\Delta, \epsilon }),
\end{align*}
where
\begin{align*}
      \mathcal{R}_\lambda=\mathcal{U}_\lambda A -(\mathcal{U}_\lambda A) \mathcal{A}_\lambda.
\end{align*}
To proceed, we first note that
\begin{eqnarray*}
\mathcal{U}_\lambda A\nabla_x f^\zeta_{\Delta, \epsilon }-\lambda\mathcal{E}_\lambda\dhalf H_t \dhalf f^\zeta_{\Delta, \epsilon }=-\lambda \mathcal{E}_\lambda\mathcal{H}f^\zeta_{\Delta, \epsilon }.
\end{eqnarray*}
Hence,  using the $\L^2$-boundedness of $\mathcal{E}_\lambda$, see Lemma \ref{le8-} $\mathrm{(i)}$,
\begin{align*}
\int_0^{\ell(\Delta)}\iint_{\Delta}|\mathcal{U}_\lambda A\nabla_x f^\zeta_{\Delta,\epsilon }-\lambda\mathcal{E}_\lambda\dhalf H_t \dhalf f^\zeta_{\Delta,\epsilon }|^2\frac {\d x\d t\d\lambda}\lambda&\lesssim \ell(\Delta)^2
\iint_{\Delta}|\cH f^\zeta_{\Delta,\epsilon }|^2{\d x\d t},
\end{align*}
as we are only integrating in $\lambda$ over the range $[0,\ell(\Delta)]$. However,
$$\cH f^\zeta_{\Delta,\epsilon }=( L^\zeta_{\Delta}-f^\zeta_{\Delta,\epsilon })/(\epsilon \ell(\Delta))^2,$$
and hence
\begin{align*}
\int_0^{\ell(\Delta)}\iint_{\Delta}|\mathcal{U}_\lambda A\nabla_x f^\zeta_{\Delta,\epsilon }-\lambda\mathcal{E}_\lambda\dhalf H_t \dhalf f^\zeta_{\Delta,\epsilon }|^2\frac {\d x\d t\d\lambda}\lambda&\lesssim \epsilon^{-4}\ell(\Delta)^{-2}
\iint_{\mathbb R^{n+1}}|f^\zeta_{\Delta,\epsilon }-L^\zeta_{\Delta}|^2{\d x\d t}\\
&\lesssim \epsilon^{-2}|\Delta|,
\end{align*}
by Lemma \ref{laa} $\mathrm{(i)}$. Second, using that $\mathcal{A}_\lambda \mathcal{A}_\lambda =\mathcal{A}_\lambda $, we write

\begin{align*}
-\mathcal{R}_\lambda \nabla_x f^\zeta_{\Delta,\epsilon }+\lambda\mathcal{E}_\lambda\dhalf H_t \dhalf f^\zeta_{\Delta, \epsilon } &= -\mathcal{R}_\lambda \P_{\lambda} \nabla_x f^\zeta_{\Delta,\epsilon } \\ &-\mathcal{R}_\lambda (1-\P_{\lambda}) \nabla_x f^\zeta_{\Delta,\epsilon } +\lambda\mathcal{E}_\lambda\dhalf H_t \dhalf f^\zeta_{\Delta, \epsilon }\\
&= -\mathcal{R}_\lambda \P_{\lambda} \nabla_x f^\zeta_{\Delta,\epsilon } +
(\mathcal{U}_\lambda A)\mathcal{A}_\lambda (\mathcal{A}_\lambda - \P_\lambda) \nabla_x f^\zeta_{\Delta,\epsilon }  \\&- \mathcal{U}_\lambda A (1-\P_{\lambda}) \nabla_x f^\zeta_{\Delta, \epsilon }+\lambda\mathcal{E}_\lambda\dhalf H_t \dhalf f^\zeta_{\Delta, \epsilon }.
\end{align*}
Using Lemma \ref{newes+},  we obtain
\begin{align*}
    \int _{0}^{\ell(\Delta)}\iint_{\Delta} |\mathcal{R}_\lambda \P_{\lambda} \nabla_x f^\zeta_{\Delta,\epsilon }|^2 \, \frac{\d x \d t \d \lambda}{\lambda} & =    \int _{0}^{\ell(\Delta)}\iint_{\Delta} |\mathcal{R}_\lambda  \nabla_x \P_{\lambda} f^\zeta_{\Delta,\epsilon }|^2 \, \frac{\d x \d t \d \lambda}{\lambda} \\ &\lesssim \int_{0}^{\ell(\Delta)}
\iint_{\mathbb R^{n+1}}| \nabla_x \P_\lambda\nabla_x f^\zeta_{\Delta,\epsilon }|^2\, \lambda{\d x\d t\d\lambda}\notag\\
&+\int_{0}^{\ell(\Delta)}\iint_{\mathbb R^{n+1}}| \partial_t \P_\lambda\nabla_x f^\zeta_{\Delta,\epsilon }|^2\, \lambda^{3}{\d x\d t\d\lambda}.
\end{align*}
 In particular, by Lemma \ref{little1}, we can conclude that
\begin{eqnarray*}
\int_{0}^{\ell(\Delta)}\iint_{\Delta}|\mathcal{R}_\lambda \P_\lambda\nabla_xf^\zeta_{\Delta, \epsilon }|^2\, \frac {\d x\d t\d\lambda}\lambda\lesssim \iint_{\mathbb R^{n+1}}|\nabla_x f^\zeta_{\Delta, \epsilon }|^2\, \d x\d t\lesssim |\Delta|,
\end{eqnarray*}
where we used Lemma \ref{laa} $\mathrm{(iii)}$. Using Lemma \ref{le11+}, we see that
$$\|(\mathcal{U}_\lambda  A)\mathcal{A}_\lambda \|_2\lesssim 1.$$
Thus,
\begin{align*}
\int _{0}^{\ell(\Delta)}\iint_{\Delta}| (\mathcal{U}_\lambda  A)\mathcal{A}_\lambda (\mathcal{A}_\lambda -\P_\lambda)\nabla_x f^\zeta_{\Delta,\epsilon }|^2\, \frac {\d x\d t\d\lambda}\lambda&\lesssim \int_{\mathbb R}\iint_{\mathbb R^{n+1}}|(\mathcal{A}_\lambda -\P_\lambda)\nabla_xf^\zeta_{\Delta,\epsilon }|^2\, \frac {\d x\d t\d\lambda}\lambda\notag\\
&\lesssim \iint_{\mathbb R^{n+1}}|\nabla_x f^\zeta_{\Delta,\epsilon }|^2\, {\d x\d t}\lesssim |\Delta|,
\end{align*}
where we have used Lemma \ref{little3} in the second inequality and  Lemma \ref{laa} $\mathrm{(iii)}$ in the last inequality. Furthermore,
\begin{align*}
-\mathcal{U}_\lambda A (I-\P_\lambda) \nabla_x f^\zeta_{\Delta,\epsilon }+\lambda\mathcal{E}_\lambda\dhalf H_t \dhalf f^\zeta_{\Delta,\epsilon }&=\lambda\mathcal{E}_\lambda\cH (I-\P_\lambda) f^\zeta_{\Delta,\epsilon }+\lambda\mathcal{E}_\lambda \dhalf H_t \dhalf \P_\lambda f^\zeta_{\Delta,\epsilon }.
\end{align*}
By the $\L^2$-boundedness of $\mathcal{E}_\lambda$ in Lemma $\ref{le8-}$ $\mathrm{(i)}$ and \eqref{eq:operatorequality}, we see that
\begin{align*}
\int_{0}^{\ell(\Delta)}\iint_{\Delta}|\lambda\mathcal{E}_\lambda\cH (I-\P_\lambda) f^\zeta_{\Delta,\epsilon }|^2\, \frac {\d x\d t\d\lambda}\lambda&\lesssim
\int_{\mathbb R}\iint_{\mathbb R^{n+1}}| \lambda^{-1}(I-\P_\lambda)f^\zeta_{\Delta,\epsilon }|^2\, \frac {\d x\d t\d\lambda}\lambda\\
&\lesssim \iint_{\mathbb R^{n+1}}|{\mathbb D} f^\zeta_{\Delta,\epsilon }|^2\, \d x\d t\lesssim |\Delta|,
\end{align*}
by Lemma \ref{little2} $\mathrm{(ii)}$ and Lemma \ref{laa} $\mathrm{(iii)}$. We also deduce
\begin{align*}
\int_{0}^{\ell(\Delta)}\iint_{\Delta}|\lambda\mathcal{E}_\lambda\dhalf H_t \dhalf \P_\lambda f^\zeta_{\Delta,\epsilon }|^2\, \frac {\d x\d t\d\lambda}\lambda&\lesssim
\int_{\mathbb R}\iint_{\mathbb R^{n+1}}| \lambda \dhalf H_t \dhalf \P_\lambda f^\zeta_{\Delta,\epsilon }|^2\, \frac {\d x\d t\d\lambda}\lambda\\
&\lesssim \int_{\mathbb R}\iint_{\mathbb R^{n+1}}|\lambda (\dhalf \mathcal{P}_\lambda) (\HT \dhalf f^\zeta_{\Delta,\epsilon })|^2\, \frac {\d x\d t\d\lambda}\lambda\\
&\lesssim \|\HT \dhalf f^\zeta_{\Delta,\epsilon }\|_{2}\lesssim |\Delta|,
\end{align*}
where we used the $\L^2$ boundedness of $\mathcal{E}_\lambda$,  Lemma $\ref{little1}$, the Plancherel's theorem, and Lemma $\ref{laa}$ $\mathrm{(iii)}$. Together these estimates prove \eqref{ff1}, hence completing the proof of  Lemma \ref{ilem2--}.

\bigskip
\noindent
{\bf Acknowledgment.} We sincerely thank the anonymous referees for their thorough review of the manuscript and their insightful comments. Their valuable feedback has significantly contributed to improving the quality and clarity of the paper.

\def\cprime{$'$} \def\cprime{$'$} \def\cprime{$'$}

\end{document}